\documentclass[pdflatex,sn-mathphys-num]{sn-jnl}

\usepackage{graphicx}%
\usepackage{multirow}%
\usepackage{amsmath,amssymb,amsfonts}%
\usepackage{amsthm}%
\usepackage{mathrsfs}%
\usepackage[title]{appendix}%
\usepackage{xcolor}%
\usepackage{textcomp}%
\usepackage{manyfoot}%
\usepackage{booktabs}%
\usepackage{algorithm}%
\usepackage{algorithmicx}%
\usepackage{algpseudocode}%
\usepackage{listings}%
\usepackage{graphicx}%
\usepackage{multirow}%
\usepackage{amsmath,amssymb,amsfonts}%
\usepackage{mathrsfs}%
\usepackage[title]{appendix}%
\usepackage{xcolor}%
\usepackage{textcomp}%
\usepackage{manyfoot}%
\usepackage{booktabs}%
\usepackage{algorithm}%
\usepackage{algorithmicx}%
\usepackage{algpseudocode}%
\usepackage{listings}%
\usepackage{url}

\usepackage{hyperref}
\usepackage{tabularx,booktabs}
\newcolumntype{Y}{>{\centering\arraybackslash}X}
\usepackage[table]{xcolor}
\usepackage{makecell}
\usepackage{blkarray}
\usepackage{nicematrix}
\usepackage{rotating}
\usepackage{multirow}
\DeclareMathAlphabet{\pazocal}{OMS}{zplm}{m}{n}
\SetMathAlphabet\pazocal{bold}{OMS}{zplm}{bx}{n}

\usepackage{subcaption}
\usepackage{sidecap}
 \usepackage{wrapfig}
 \usepackage{tabularx,booktabs}
\newcolumntype{Y}{>{\centering\arraybackslash}X}
\usepackage{booktabs}
\usepackage{tikz}
\usepackage{blkarray}
\usepackage{colortbl}
\usepackage{multirow}
\usepackage{subcaption}
\usepackage{xcolor}
\usepackage{graphicx}
\usepackage{float}

\theoremstyle{thmstyleone}%
\newtheorem{theorem}{Theorem}
\newtheorem{lemma}[theorem]{Lemma}
\newtheorem{corollary}[theorem]{Corollary}

\theoremstyle{thmstyletwo}%

\theoremstyle{thmstylethree}%
\newtheorem{definition}{Definition}%

\begin{document}

\title[Random Walks on Virtual Persistence Diagrams]{Random Walks on Virtual Persistence Diagrams}


\author*{\fnm{Charles} \sur{Fanning}}\email{cfannin8@students.kennesaw.edu}

\author{\fnm{Mehmet} \sur{Aktas}}\email{maktas1@kennesaw.edu}

\affil{\orgdiv{School of Data Science and Analytics}, 
\orgname{Kennesaw State University}, 
\orgaddress{\street{1000 Chastain Rd NW}, 
\city{Kennesaw}, 
\postcode{30144}, 
\state{Georgia}, 
\country{United States}}}


\abstract{
In the uniformly discrete case of virtual persistence diagram groups $K(X,A)$, we construct a translation-invariant heat semigroup. The kernels are supported on a countable subgroup $H$, and the restriction to $H$ has Fourier exponent $\lambda_H$ satisfying $\lambda_H(\theta)=\sum_{\kappa\in H\setminus\{0\}}\bigl(1-\Re\,\theta(\kappa)\bigr)\,\nu(\kappa),$ for a symmetric $\nu\in\ell^1(H\setminus\{0\})$. This gives a symmetric jump process on $H$. The exponent $\lambda_H$ determines heat kernels, which define reproducing kernel Hilbert spaces and their associated semimetrics. Convex orders on the mixing measures give monotonicity for the kernels, Hilbert spaces, and semimetrics.
}

\keywords{persistent homology, virtual persistence diagrams, random walks, heat semigroups, reproducing kernel Hilbert spaces}

\pacs[MSC Classification]{60J27, 60B15, 43A35, 46E22, 31C20, 55N31}

\maketitle

\section{Introduction}\label{sec1}

Persistence diagrams give stable and widely used summaries of filtered topological data \cite{892133,Zomorodian2005ComputingPH,Oudot2015PersistenceT}, and many problems in topological data analysis require vectorizations, kernels, and other analytic representations built from them. Existing work studies persistence diagrams through vectorizations and kernel methods in linear spaces. Algebraic approaches to persistence show that diagrams also come from M\"obius inversion of rank-type data. Signed multiplicities occur intrinsically in these constructions, and the resulting diagram objects extend beyond interval decompositions \cite{Patel_2018,Kim2021GeneralizedPD}. Virtual persistence diagrams address this algebraic structure by passing from the commutative monoid of persistence diagrams to an abelian group \cite{bubenik2022virtualpd}. That abelian group carries characters, Fourier transforms, and semigroup methods as intrinsic tools. In this paper we develop those harmonic-analytic tools for virtual persistence diagrams in infinite rank.

\noindent\textbf{\underline{Objective}}

The first objective is to construct a translation-invariant heat semigroup $(P_t)_{t\ge0}$ on the virtual persistence diagram group and to show a countable subgroup $H$ on which the semigroup is supported and has a L\'evy--Khintchine exponent $\lambda_H$.

The second objective is to use this representation to define kernels, semimetrics, and invariants and to establish inequalities and order relations among them.

\noindent\textbf{\underline{Main Results}}


The finite theory is described by graph-Laplacian heat semigroups on groups of the form $K(X,A)\cong\mathbb Z^{|X\setminus A|}$. The infinite-rank construction proceeds from a symmetric translation-invariant pair-jump kernel satisfying the summability condition \eqref{eq:psi-summability}, which defines a global negative-definite symbol $\lambda^\psi$ on $\widehat{K(X,A)}$. Finite-rank symbols come from $\lambda^\psi$ by projection and include the boundary contribution determined by jumps leaving the finite set. The symbol $\lambda^\psi$ generates a symmetric, translation-invariant, strongly continuous Markov semigroup $(P_t)_{t\ge0}$ on $K(X,A)$ with convolution kernels $p_t^\psi=P_t\delta_0$.

The kernels $p_t^\psi$ generate a countable subgroup
\begin{equation}\label{eq:intro-H}
H
=
\Big\langle \bigcup_{q\in\mathbb Q_{\ge0}} \operatorname{supp}(p_q^\psi) \Big\rangle
\subset K(X,A),
\end{equation}
and satisfy $\operatorname{supp}(p_t^\psi)\subset H$ for all $t\ge0$. The semigroup acts through $H$, and its restriction to $\ell^2(H)$ is a convolution semigroup.

The reduction to $H$ gives the Fourier representation
\[
\widehat p_t^\psi(\theta)=e^{-t\lambda_H(\theta)}
\]
for $\theta\in\widehat H$ and $t\ge0$, where $\lambda_H:\widehat H\to[0,\infty)$. The first main theorem gives a L\'{e}vy--Khintchine representation for $\lambda_H$.

\begin{theorem}
\label{thm:intro-lk}
There exists, for all $\theta\in\widehat H$, a unique symmetric $\ell^1(H\setminus\{0\})\ni \nu\colon H\setminus\{0\}\to[0,\infty)$ such that
\begin{equation}
\lambda_H(\theta)
=
\sum_{\kappa\in H\setminus\{0\}}
\bigl(1-\Re\,\theta(\kappa)\bigr)\,\nu(\kappa)
\end{equation}
\end{theorem}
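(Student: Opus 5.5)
The plan is to read off $\nu$ from the small-time behaviour of the heat kernels on the countable discrete group $H$, and then use Fourier uniqueness on the compact dual $\widehat H$ both to identify $\lambda_H$ and to pin down $\nu$.

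\textbf{Step 1: Lévy measure from the kernels.} Since $(P_t)$ restricted to $\ell^2(H)$ is a symmetric convolution semigroup of probability measures $p_t^\psi$ with $\widehat p_t^\psi(\theta)=e^{-t\lambda_H(\theta)}$, I define
\[
\nu(\kappa):=\lim_{t\downarrow 0}\frac{p_t^\psi(\kappa)}{t},\qquad \kappa\in H\setminus\{0\}.
\]
To see that this limit exists, I use the construction of $(P_t)$ from the pair-jump kernel $\psi$. The generator acts on $\delta_0$ as a bounded jump operator, because the summability condition \eqref{eq:psi-summability} makes the total jump rate finite. Concretely, I will check that the generator $L$ of $P_t$ is bounded on $\ell^1(H)$: $L f=\sum_\kappa (f(\cdot+\kappa)-f)\,\nu(\kappa)$ with total mass $\sum_\kappa\nu(\kappa)<\infty$, where $\nu$ is the pushforward of $\psi$ to differences of pairs. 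Then $p_t^\psi=e^{tL}\delta_0$ is given by a norm-convergent exponential series. It follows that $p_t^\psi(\kappa)=t\,\nu(\kappa)+O(t^2)$ uniformly for $\kappa\neq0$, and that $1-p_t^\psi(0)=t\sum_{\kappa\ne0}\nu(\kappa)+O(t^2)$.

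Symmetry $\nu(-\kappa)=\nu(\kappa)$ follows from the symmetry of $p_t^\psi$, which comes from self-adjointness of the semigroup. Nonnegativity of $\nu$ is immediate. The bound $\nu\in\ell^1$ follows from Fatou's lemma: $\sum_{\kappa\neq0}p_t^\psi(\kappa)/t\le(1-p_t^\psi(0))/t$, and the right-hand side is bounded.

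\textbf{Step 2: Identification of the exponent.} For fixed $\theta\in\widehat H$,
\[
\frac{1-e^{-t\lambda_H(\theta)}}{t}=\sum_{\kappa\ne0}\bigl(1-\Re\,\theta(\kappa)\bigr)\frac{p_t^\psi(\kappa)}{t},
\]
using symmetry to replace $\theta$ by $\Re\,\theta$ and $\sum_\kappa p_t^\psi(\kappa)=1$. The left-hand side tends to $\lambda_H(\theta)$. For the right-hand side, the $\ell^1$ convergence of $p_t^\psi/t$ to $\nu$ away from $0$ from Step 1, together with $|1-\Re\,\theta|\le2$, lets me pass to the limit and obtain the formula.

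\textbf{Step 3: Uniqueness.} Suppose $\nu_1,\nu_2$ are two symmetric $\ell^1$ measures giving the same $\lambda_H$. Then $\sum_\kappa \Re\,\theta(\kappa)\,(\nu_1-\nu_2)(\kappa)$ is constant in $\theta$. By symmetry this equals $\widehat{(\nu_1-\nu_2)}(\theta)$, the Fourier transform of an $\ell^1(H)$ function on the compact group $\widehat H$. A constant Fourier transform means $\nu_1-\nu_2$ is a multiple of $\delta_0$, and since it is supported off $0$, $\nu_1=\nu_2$. Injectivity of the Fourier transform on $\ell^1$ of a discrete abelian group is used here.

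\textbf{Main obstacle.} The hard part is Step 1, namely justifying that the generator is bounded, equivalently that the small-time limit $p_t^\psi/t\to\nu$ holds in $\ell^1(H\setminus\{0\})$. This requires tracing the summability condition \eqref{eq:psi-summability} through the infinite-rank construction, including the boundary jumps that leave finite sets.

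If boundedness fails, I would fall back on a softer argument. Each $\theta\mapsto e^{-t\lambda_H(\theta)}$ is positive definite, so $\lambda_H$ is continuous and negative definite on the compact group $\widehat H$. The abstract Lévy--Khintchine formula for compact duals of discrete groups then gives a representation with a Lévy measure on $H\setminus\{0\}$ plus a quadratic form. The quadratic part must vanish because $H$ is discrete, since homomorphisms from $\widehat H$ are trivial on the connected component. The $\ell^1$ property would then follow from the boundedness of $\lambda_H$, which I expect the summability condition gives.
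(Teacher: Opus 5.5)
Your proposal is correct and follows essentially the same route as the paper. Both use a bounded generator on $\ell^1$ (available from the summability condition through Theorem~\ref{thm:global}), read off $\nu$ as $\lim_{t\downarrow0}p_t^\psi(\kappa)/t$ for $\kappa\neq0$, identify $\lambda_H$ by differentiating $\widehat p_t^\psi=e^{-t\lambda_H}$ at $t=0$, and prove uniqueness via injectivity of the Fourier transform on $\ell^1(H)$. The minor differences are that you obtain symmetry of $\nu$ directly from $p_t^\psi(\kappa)=p_t^\psi(-\kappa)$ and identify $\nu$ concretely with the pair-jump measure, whereas the paper deduces symmetry from real-valuedness of $\lambda_H$ through Fourier injectivity; your fallback via the abstract L\'evy--Khintchine formula is not needed.
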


The theorem identifies $(p_t^\psi)$ as a symmetric jump process on $H$ with jump intensity $\nu$ and Fourier exponent $\lambda_H$. In particular, the heat semigroup defines a random walk on virtual persistence diagrams with state space $H$.

The L\'{e}vy--Khintchine representation induces a functional calculus on $\lambda_H$. Let $\eta$ be a finite positive Borel measure on $[0,\infty)$ and define
\[
m_\eta(\lambda)
=
\int_{[0,\infty)} e^{-u\lambda}\,d\eta(u)
\]
for $\lambda\ge0$. Define the translation-invariant kernel
\[
K_\eta(g,h)
=
\int_{\widehat H}
\theta(h-g)\,m_\eta(\lambda_H(\theta))\,d\mu_{\widehat H}(\theta)
\]
for $g,h\in H$, with reproducing kernel Hilbert space $\mathcal H_{K_\eta}$ and semimetric
\[
d_\eta(g,h)^2
=
K_\eta(g,g)+K_\eta(h,h)-2\Re K_\eta(g,h).
\]
We also define
\begin{align*}
A_\eta &= \int_{\widehat H}\lambda_H(\theta)\,m_\eta(\lambda_H(\theta))\,d\mu_{\widehat H}(\theta), \\
B_\eta &= K_\eta(0,0).
\end{align*}
whenever the integral defining $A_\eta$ is finite.

The function $m_\eta$ represents a mixture of heat scales, and the kernel $K_\eta$ is the corresponding mixture of heat kernels. This induces an order on the kernel family given by convex order on the mixing measures $\eta$.

\begin{theorem}
Let $\eta_1,\eta_2$ be finite positive Borel measures on $[0,\infty)$ with finite first moments. If $\eta_1 \preceq_{\mathrm{cx}} \eta_2$, then:
\begin{enumerate}
\item $K_{\eta_1}\preceq K_{\eta_2}$. In particular:
\begin{enumerate}
\item $\mathcal H_{K_{\eta_1}}\hookrightarrow \mathcal H_{K_{\eta_2}}$ contractively.
\item $d_{\eta_1}(g,h)\le d_{\eta_2}(g,h)$ for all $g,h\in H$.
\item $B_{\eta_1}\le B_{\eta_2}$.
\end{enumerate}
\item If, in addition, $A_{\eta_2}<\infty$, then:
\begin{enumerate}
\item $A_{\eta_1}<\infty$.
\item $A_{\eta_1}\le A_{\eta_2}$.
\item $d_{\eta_2}(g,h)\le A_{\eta_2}^{1/2}\rho(g,h)$ for all $g,h\in H$.
\end{enumerate}
\end{enumerate}
\end{theorem}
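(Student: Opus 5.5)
The plan is to reduce every claim to a pointwise inequality between the Fourier multipliers $m_{\eta_1}$ and $m_{\eta_2}$ on $[0,\infty)$, and then transport it through the Fourier representation on the compact dual $\widehat H$. Recall that $\eta_1\preceq_{\mathrm{cx}}\eta_2$ means $\int f\,d\eta_1\le\int f\,d\eta_2$ for every convex $f$ for which both integrals make sense. For each fixed $\lambda\ge0$ the function $u\mapsto e^{-u\lambda}$ is convex and bounded on $[0,\infty)$. Hence $m_{\eta_1}(\lambda)\le m_{\eta_2}(\lambda)$ for all $\lambda\ge0$. Since $0\le m_\eta\le\eta([0,\infty))<\infty$ and $\mu_{\widehat H}$ is a probability measure, each $K_\eta$ is well defined and bounded.

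\textbf{Part 1.} We have
\[
K_{\eta_2}(g,h)-K_{\eta_1}(g,h)=\int_{\widehat H}\theta(h-g)\,\bigl(m_{\eta_2}-m_{\eta_1}\bigr)(\lambda_H(\theta))\,d\mu_{\widehat H}(\theta),
\]
and the weight $m_{\eta_2}-m_{\eta_1}$ is nonnegative. For finitely many $g_i\in H$ and $c_i\in\mathbb C$,
\[
\sum_{i,j}\bar c_i c_j\,\theta(g_j-g_i)=\Bigl|\sum_j c_j\theta(g_j)\Bigr|^2\ge0,
\]
so the difference kernel is positive definite. This is the Loewner order $K_{\eta_1}\preceq K_{\eta_2}$.
\begin{enumerate}
\item[(a)] Aronszajn's inclusion theorem gives $K_1\preceq K_2$ if and only if $\mathcal H_{K_1}\hookrightarrow\mathcal H_{K_2}$ contractively.
\item[(b)] Since $\lambda_H$ is real and the kernels are translation invariant,
\[
d_\eta(g,h)^2=2\int_{\widehat H}\bigl(1-\Re\,\theta(h-g)\bigr)\,m_\eta(\lambda_H(\theta))\,d\mu_{\widehat H}(\theta).
\]
The integrand factor $1-\Re\theta\ge0$, so $d_\eta$ is monotone in $m_\eta$. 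Equivalently, $d_\eta(g,h)^2$ is the quadratic form of $K_\eta$ on the vector $\delta_g-\delta_h$, which is monotone under $\preceq$.
\item[(c)] $B_\eta=\int m_\eta(\lambda_H)\,d\mu_{\widehat H}$, which is monotone for the same reason.
\end{enumerate}

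\textbf{Part 2.} Write
\[
\lambda\,m_\eta(\lambda)=\int\lambda e^{-u\lambda}\,d\eta(u).
\]
For fixed $\lambda\ge0$ the map $u\mapsto\lambda e^{-u\lambda}$ is again convex and bounded, so $\lambda m_{\eta_1}(\lambda)\le\lambda m_{\eta_2}(\lambda)$. Integrating against $\mu_{\widehat H}$ gives (a) and (b) at once: $A_{\eta_1}\le A_{\eta_2}<\infty$.

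For (c), I would combine the formula for $d_{\eta_2}^2$ above with a pointwise comparison
\begin{equation*}
2\bigl(1-\Re\,\theta(\kappa)\bigr)\le \lambda_H(\theta)\,\rho(0,\kappa)^2,\qquad \kappa\in H,\ \theta\in\widehat H,
\end{equation*}
where $\rho$ is the translation-invariant metric on $H$ fixed earlier in the paper (I am assuming the paper's $\rho$ is one for which this holds). Plugging this in gives
\[
d_{\eta_2}(g,h)^2\le\rho(g,h)^2\int\lambda_H\,m_{\eta_2}(\lambda_H)\,d\mu_{\widehat H}=A_{\eta_2}\,\rho(g,h)^2.
\]

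\textbf{Main obstacle.} The comparison inequality is where I expect the difficulty. My first attempt would use the identity
\[
2\bigl(1-\Re\,\theta(\kappa)\bigr)=|1-\theta(\kappa)|^2.
\]
Write $\kappa=\kappa_1+\dots+\kappa_n$ as a chain of jumps in $\operatorname{supp}\nu$. The telescoping bound $|1-\theta(\kappa)|\le\sum_i|1-\theta(\kappa_i)|$ then applies. Cauchy--Schwarz with weights $\nu(\kappa_i)$ gives
\[
|1-\theta(\kappa)|^2\le\Bigl(\sum_i\nu(\kappa_i)^{-1}\Bigr)\sum_i\nu(\kappa_i)\,|1-\theta(\kappa_i)|^2.
\]
The second factor is at most $2\lambda_H(\theta)$ by Theorem~\ref{thm:intro-lk}, provided the $\kappa_i$ are distinct, or after symmetrizing and merging repeated jumps. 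This identifies $\rho^2$ with the resistance-type quantity $\inf\sum_i\nu(\kappa_i)^{-1}$, up to the constant factor $2$. If the paper's $\rho$ is defined differently, I would adapt the constant by comparing it with this resistance metric.
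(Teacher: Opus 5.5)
Your Part 1 and Part 2(a)--(b) are correct and follow the paper's argument. You get the pointwise inequalities $m_{\eta_1}\le m_{\eta_2}$ and $\lambda m_{\eta_1}(\lambda)\le \lambda m_{\eta_2}(\lambda)$ from convexity of $u\mapsto e^{-u\lambda}$ and $u\mapsto \lambda e^{-u\lambda}$, integrate against $\mu_{\widehat H}$, and use the Loewner order for the contractive inclusion.

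The gap is Part 2(c), and it cannot be closed. You assume the comparison $|\theta(\gamma)-1|^2\le \lambda_H(\theta)\,\rho(\gamma,0)^2$, but it is false for the paper's $\rho$. That $\rho$ is the $1$--Wasserstein metric built from $\overline d_1$ and does not depend on $\psi$, whereas $\lambda_H$ scales linearly under $\psi\mapsto c\psi$. Run your chain/Cauchy--Schwarz argument with the $\kappa_j$ in distinct classes $\{\pm\kappa_j\}$, so that $\sum_j\nu(\kappa_j)|1-\theta(\kappa_j)|^2\le\lambda_H(\theta)$. What it actually proves is $|1-\theta(\kappa)|^2\le R_\nu(\kappa)\,\lambda_H(\theta)$, where
\[
R_\nu(\kappa)=\inf\Bigl\{\sum_j \frac{n_j^2}{\nu(\kappa_j)} : \kappa=\sum_j n_j\kappa_j,\ n_j\in\mathbb Z,\ \kappa_j\in\operatorname{supp}\nu,\ \kappa_j\neq\pm\kappa_k \text{ for } j\neq k\Bigr\}.
\]
Hence $d_{\eta_2}(g,h)^2\le R_\nu(h-g)\,A_{\eta_2}$. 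Since $R_\nu$ scales like $c^{-1}$ while $\rho$ does not move, no adjustment of constants turns $R_\nu$ into $\rho^2$.

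In fact 2(c) fails as stated. Take $X\setminus A=\{x,y\}$ with $\overline d_1(x,y)=\overline d_1(x,[A])=\overline d_1(y,[A])=1$ and $\psi(1)=c>0$. To stay in infinite rank, add further points at $\overline d_1$-distance $2$ from $x$, $y$ and each other, at distance $1$ from $[A]$, and set $\psi(2)=0$; nothing changes. With $\gamma=e_x-e_y$ we get $\nu(\pm\gamma)=c/2$, $H=\mathbb Z\gamma$, and $\lambda_H(\theta)=c\bigl(1-\Re\theta(\gamma)\bigr)=\tfrac c2|\theta(\gamma)-1|^2$. For any admissible $\eta$, e.g.\ $\eta_1=\eta_2=\delta_1$,
\[
d_\eta(0,\gamma)^2=\int_{\widehat H}|\theta(\gamma)-1|^2\,m_\eta(\lambda_H(\theta))\,d\mu_{\widehat H}(\theta)=\frac{2}{c}\,A_\eta,
\qquad
\rho(\gamma,0)=W_1(e_x,e_y)=1.
\]
So $d_\eta(0,\gamma)\le A_\eta^{1/2}\rho(\gamma,0)$ is false for $0<c<2$, since $A_\eta>0$. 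Here $R_\nu(\gamma)=2/c$, so your resistance bound is an equality.

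The paper's proof bridges this step by citing Lemma~\ref{lem:char-lip-comparison-uniform}. That lemma only gives $|\theta(\gamma)-1|\le \mathrm{Lip}_{\overline d_1}(\phi_\theta)\,\rho(\gamma,0)$ and says nothing about $\lambda_H$. The finite-rank link in Lemma~\ref{lem:lambda-vs-L} carries a factor $2w_{\min}d_{\min}^2/\pi^2$, which has been dropped. So your 2(c) is incomplete for the same reason the paper's is. What can actually be proved is the $\nu$-dependent bound above, or 2(c) under an added hypothesis such as $\mathrm{Lip}_\rho(\theta)^2\le\lambda_H(\theta)$ on $\widehat H$.
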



The first theorem shows $(p_t^\psi)$ as a symmetric jump process on $H$ with jump intensity $\nu$ and Fourier exponent $\lambda_H$. Its Fourier transform satisfies \( \widehat p_t^\psi(\theta)=e^{-t\lambda_H(\theta)}. \) The second theorem proves that the kernels $K_\eta$ are ordered by convex order on the mixing measures $\eta$. This ordering transfers directly to the reproducing kernel Hilbert spaces, the semimetrics $d_\eta$, and the quantities $A_\eta$ and $B_\eta$. These results connect the jump-process structure of $(p_t^\psi)$ on $H$ with a family of translation-invariant kernels determined by the heat-scale mixtures $m_\eta(\lambda_H)$.

\noindent\textbf{\underline{New Difficulties and Limitations}}

The infinite uniformly discrete case introduces two obstructions. First, semimetrics and metrics obtained from functional calculus applied to $\lambda_H$ do not in general recover the transport metric $\rho$. Second, convex order on the representing measures $\eta$ on $[0,\infty)$ gives the relevant order structure, rather than any order on the group $H$. Consequently, one should not expect reverse comparison with $\rho$ or majorization principles formulated directly on $H$ without additional structure.

The transport metric $\rho$ is induced by optimal matching and restricts to $H$. By contrast, the semimetrics $d_\eta$, the Green semimetric $d_{G_s}$, and the semimetrics obtained from multipliers of $\lambda_H$ are defined through functional calculus applied to the symbol $\lambda_H$. These semimetrics depend on the chosen semigroup and are not determined by the transport geometry alone. The random walk generated by $(p_t^\psi)$ can undersample directions that are large with respect to $\rho$.

The semimetrics obtained from $\lambda_H$ satisfy upper bounds in terms of $\rho$, but no reverse bounds hold in general. The upper bounds follow from estimating $|\theta(\gamma)-1|$ in terms of the displacement $\rho(\gamma,0)$. No converse inequality holds in general, since the spectral weights defining the semimetrics derived from $\lambda_H$ may concentrate on characters with small oscillation.

The kernels $K_\eta$ are parameterized by finite measures $\eta$ on $[0,\infty)$ through the functional calculus $m_\eta(\lambda_H)$. The relevant order is convex order on the measures $\eta$, not an order on the group $H$. The monotonicity theorem therefore acts on the parameter $\eta$ and does not by itself induce an order on $H$ or a monotonicity principle for probability measures on $H$.

One should not expect the Green semimetric $d_{G_s}$ to satisfy a lower bound of the form $d_{G_s}\ge c\,\rho$. The Green semimetric is defined by a spectral weight built from $\lambda_H$, and the factor $|\theta(\gamma)-1|^2$ may be small on large sets of characters even when $\rho(\gamma,0)$ is large. Convex-order monotonicity defines a majorization relation on the representing measures $\eta$, and this relation induces monotonicity of the kernels $K_\eta$, the associated reproducing kernel Hilbert spaces, and the semimetrics $d_\eta$.

\subsection{Our Contributions}

We construct an infinite-rank heat semigroup on the virtual persistence diagram group from consistent finite-rank graph-Laplacian semigroups under the standing assumption (Definition~\ref{def:standing-assumption}, Lemma~\ref{lem:projection}, and Theorem~\ref{thm:global}) and show a countable subgroup $H$ that supports all convolution kernels and Fourier transforms (Lemma~\ref{lem:effective-support}). We prove that this semigroup has a Fourier representation on $H$ with symbol given by a L\'evy--Khintchine formula (Theorem~\ref{thm:lk-H}) and show the corresponding jump intensity, which defines a symmetric random walk on virtual persistence diagrams. We then use this symbol to define a functional calculus that constructs heat-scale mixture kernels $K_\eta$, their Hilbert spaces $\mathcal H_{K_\eta}$, semimetrics $d_\eta$, and invariants $A_\eta$ and $B_\eta$. Finally, we prove that convex order on the mixing measures determines the ordering of these kernels, Hilbert spaces, semimetrics, and invariants, and we derive Lipschitz and Sobolev estimates, mass tail and covering bounds for the random walk, and truncation approximations.

\subsection{Organization of the Paper}

\begin{itemize}
\item Section~\ref{sec:background} introduces persistent homology, persistence diagrams, virtual persistence diagram groups $K(X,A)$, and Wasserstein geometry.
\item Section~\ref{sec:heat-rkhs} constructs the infinite-rank heat semigroup, identifies the subgroup $H$, and develops the Fourier and L\'evy--Khintchine representations.
\item Section~\ref{sec:stoch-mc} develops the random walk on $H$ and proves majorization of the kernel family and associated invariants under convex order.
\item Section~\ref{sec:example} presents a finite weighted-graph example illustrating the heat flow, random walk, and invariants in the discrete case.
\end{itemize}

\section{Background and Notation}
\label{sec:background}

\noindent\textbf{\underline{Historical Context}}


Persistent homology begins with a filtration
\[
\emptyset = K^0 \subseteq K^1 \subseteq \cdots \subseteq K^m = K,
\]
and studies the evolution of the homology groups \(H_k(K^i)\) along the filtration. For \(p \ge 0\), the \(p\)-persistent \(k\)-th homology is defined by
\[
H_k^{i,p} = \operatorname{im}\bigl(H_k(K^i)\to H_k(K^{i+p})\bigr),
\]
so that \(H_k^{i,p}\) consists of those \(k\)-dimensional homology classes that are present at stage \(i\) and persist until at least stage \(i+p\). In this way, persistent homology records the birth and death of topological features, represented algebraically by homology classes, across the filtration. In the one-parameter case, these birth--death intervals determine barcodes and persistence diagrams, which give canonical summaries of filtered topological and geometric data \cite{892133,Zomorodian2005ComputingPH,Oudot2015PersistenceT}.

A persistence diagram $D_k$ is a finite multiset of points in
\[
\overline{\mathbb{R}}^2_{<} = \{(b,d)\in\mathbb{R}\times(\mathbb{R}\cup\{\infty\}) : b<d\}.
\]
The diagonal \( \Delta = \{(x,x):x\in\mathbb{R}\} \) is included with infinite multiplicity. Points $(b,\infty)$ are included. The Wasserstein metrics are defined by optimal transport with the diagonal $\Delta$ as a sink for unmatched mass. Persistence diagrams form a metric space with a distinguished subset $\Delta$.

In algebraic approaches to persistence, diagram constructions come from M\"obius inversion of rank-type data, which produces signed multiplicities and does not require interval decompositions \cite{Patel_2018, Kim2021GeneralizedPD}. Ordinary persistence diagrams form a commutative monoid under pointwise summation, but this monoid is not closed under the diagram constructions produced by M\"obius inversion, since those constructions naturally give signed multiplicities. This lack of closure forces passage to formal differences and hence to a group completion. 

Throughout this paper, $(X,d,A)$ denotes a metric pair, where $d\colon X\times X\to[0,\infty)$ is a metric and $A\subseteq X$ is a distinguished subset, referred to as the diagonal. Let $X/A$ denote the quotient space obtained by collapsing $A$ to a single point, and write $[A]\in X/A$ for the resulting basepoint. We freely identify $(X,d,A)$ with the pointed metric space $(X/A,\overline d,[A])$, where $\overline d$ is the quotient metric induced by $d$, following \cite{bubenik2022virtualpd}.

\subsection{Virtual Persistence Diagrams}

Assume that $A\neq\varnothing$. For $x\in X$, define $d(x,A)=\inf_{a\in A} d(x,a)$ and define the $1$--strengthened metric on $X$ by
\[
d_1(x,y)=\min\bigl(d(x,y),\, d(x,A)+d(y,A)\bigr).
\]
As shown in \cite{bubenik2022virtualpd}, $d_1$ descends to a genuine metric $\overline d_1$ on $X/A$, which is fixed throughout as the ground metric for all Wasserstein and analytic constructions.

Let $D(X)$ denote the free commutative monoid on $X$, realized as the set of finitely supported functions $f\colon X\to\mathbb N$ with pointwise addition, and let $D(A)\subseteq D(X)$ be the submonoid of functions supported on $A$. The quotient commutative monoid
\[
D(X,A)=D(X)/D(A)
\]
is the monoid of finite persistence diagrams on $(X,d,A)$, with neutral element denoted by $0$. Its Grothendieck group completion is denoted by $K(X,A)$.

Let $W_1$ denote the $1$--Wasserstein metric on $D(X,A)$ induced by the ground metric $\overline d_1$ on $X/A$, as defined in \cite{bubenik2022virtualpd}. For $p=1$, $W_1$ is translation invariant. This invariance implies that the formula
\[
\rho(\alpha-\beta,\gamma-\delta) = W_1(\alpha+\delta,\gamma+\beta), \qquad \alpha,\beta,\gamma,\delta\in D(X,A),
\]
is well defined. We refer to $(K(X,A),\rho)$ as the virtual persistence diagram group equipped with its $1$--Wasserstein metric \cite{bubenik2022virtualpd}.

\begin{figure}[h!]
\captionsetup[subfigure]{justification=centering}
 \centering
 \begin{subfigure}[h]{0.4\textwidth}
 \centering
 \includegraphics[width=\textwidth]{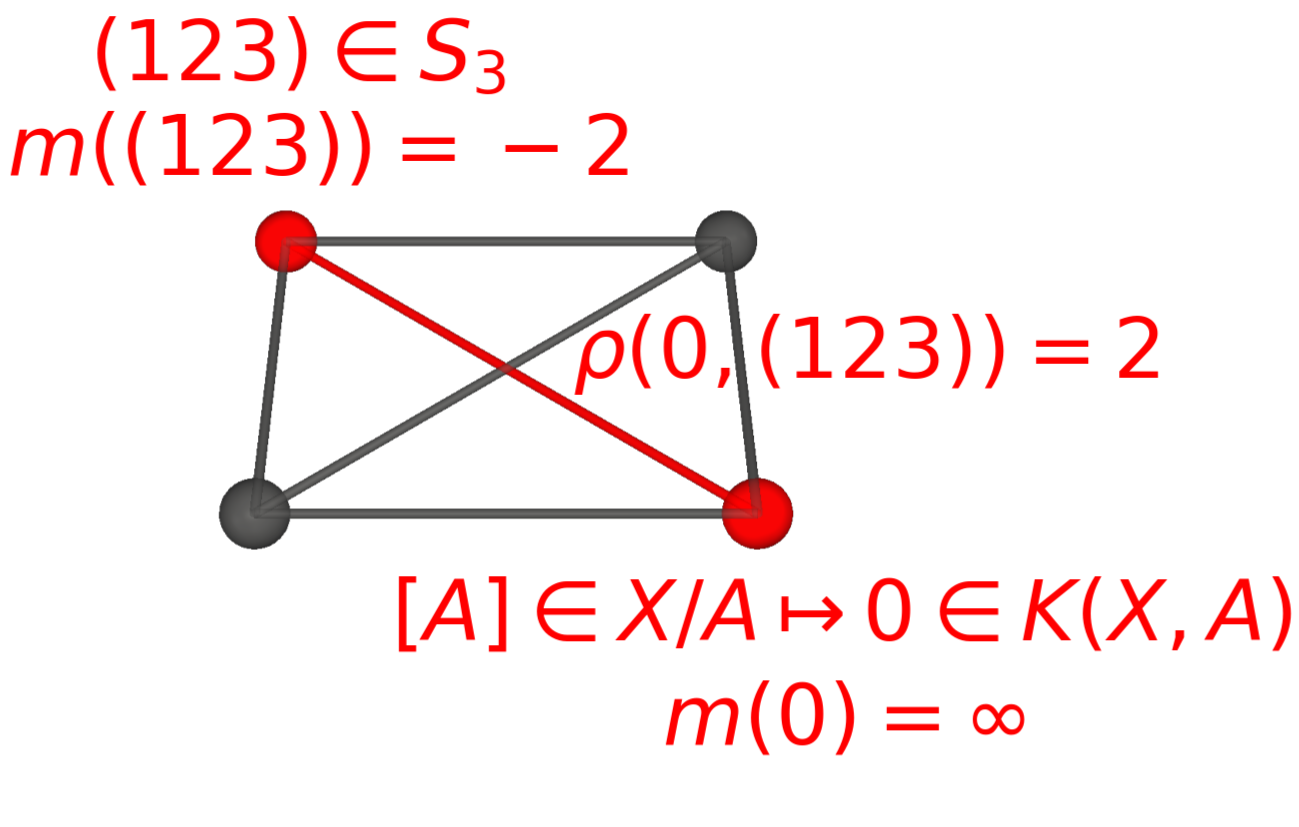}%
 \caption{Finite case }
 \end{subfigure}%
 ~ 
 \begin{subfigure}[h]{0.4\textwidth}
 \centering
 \includegraphics[width=\textwidth]{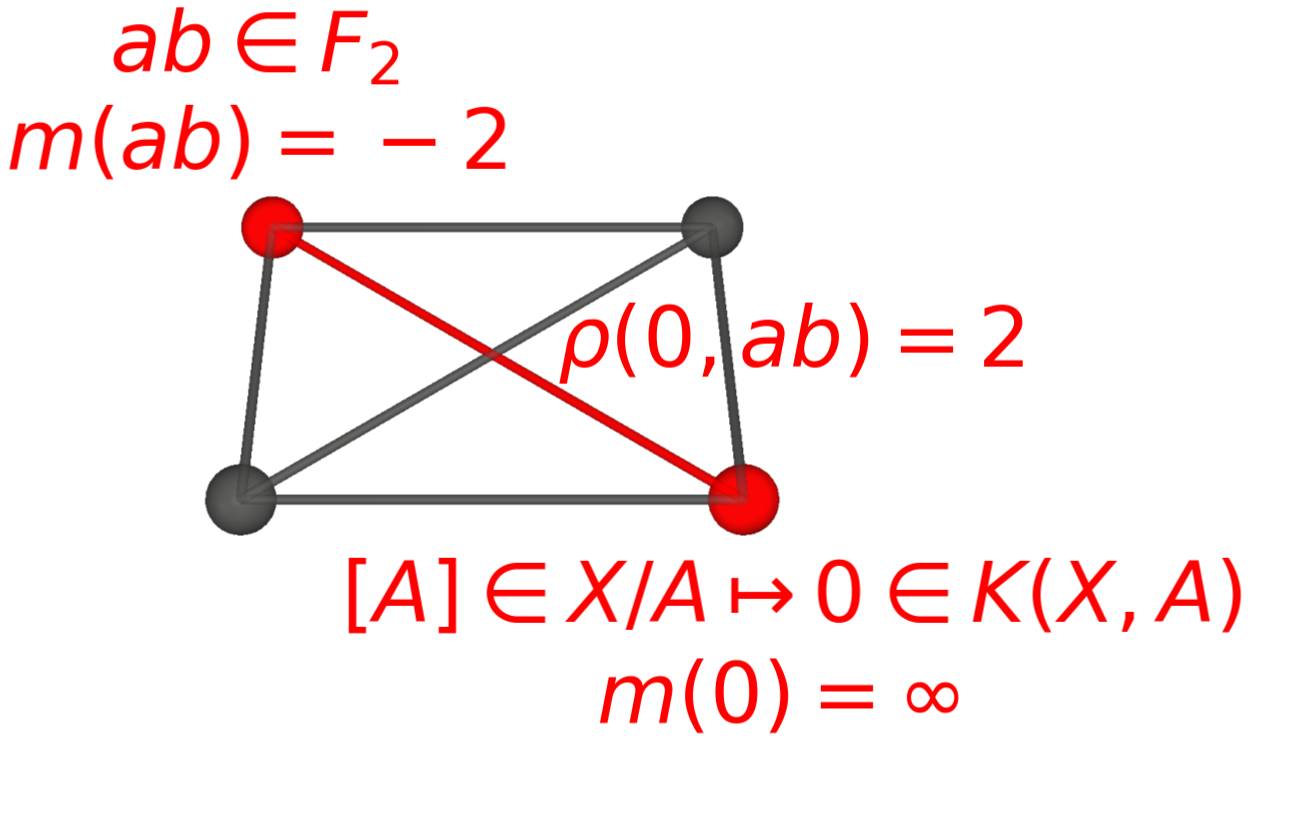}%
 \caption{ Discrete case.}
 \end{subfigure}%
 \caption{\emph{(a)} A finite virtual persistence diagram on the Cayley graph of $S_3$ with its word metric, with basepoint $0\in K(X,A)$ and element $(123)\in S_3$ at distance $\rho(0,(123))=2$.
\emph{(b)} An infinite uniformly discrete virtual persistence diagram modeled on the Cayley graph of the free group $F_2=\langle a,b\rangle$, with element $ab\in F_2$ at distance $\rho(0,ab)=2$.}
 \label{fig:finite-vs-discrete-vpd}
\end{figure}

Figure~\ref{fig:finite-vs-discrete-vpd} contrasts two virtual persistence diagrams with identical support but embedded in different metric spaces. In panel~(a), the diagram is supported in the Cayley graph of the finite group $S_3$, and the associated virtual persistence diagram group $K(X,A)$ is finite. In panel~(b), the diagram is supported in the Cayley graph of the free group $F_2$, which is infinite and uniformly discrete, and in this case $K(X,A)$ is an infinite discrete abelian group.

\subsection{Reproducing Kernel Hilbert Spaces for Virtual Persistence Diagrams}

\subsubsection{Finite virtual persistence diagrams}

Assume that $X\setminus A$ is finite. Then $K(X,A)$ is the free abelian group on $X\setminus A$, which we identify with $\mathbb Z^{|X\setminus A|}$ after fixing an ordering of $X\setminus A$. The Pontryagin dual of $K(X,A)$ is therefore $\widehat K\cong\mathbb T^{|X\setminus A|}$, equipped with normalized Haar probability measure $\mu$, and for $\theta\in\widehat K$ and $\gamma\in K(X,A)$ we write the corresponding character as $\chi_\theta(\gamma)=e^{i\langle
\gamma,\theta\rangle}$.

For each $\theta=(\theta_1,\dots,\theta_{|X\setminus A|})\in\mathbb T^{|X\setminus A|}$, with coordinates taken with respect to the fixed ordering of $X\setminus A$ used above, define the \emph{phase function}
\[
\phi_\theta\colon X/A\longrightarrow \mathbb R/2\pi\mathbb Z,
\qquad
\phi_\theta([A])=0,
\quad
\phi_\theta(x_j)=\theta_j\ (\mathrm{mod}\ 2\pi).
\]
The Lipschitz seminorm $\mathrm{Lip}_{\overline d_1}(\phi_\theta)$ is taken with respect to the metric $\overline d_1$ on $X/A$ and the geodesic metric on $\mathbb R/2\pi\mathbb Z$.

\begin{lemma}[{\cite[Lemma~3]{fanning2025reproducingkernelhilbertspaces}}]
\label{lem:phase-vs-lip}
For every $\theta\in\mathbb T^{|X\setminus A|}$,
\[
\frac{2}{\pi}\,
\mathrm{Lip}_{\overline d_1}(\phi_\theta)
\ \le\
\mathrm{Lip}_\rho(\chi_\theta)
\ \le\
\mathrm{Lip}_{\overline d_1}(\phi_\theta).
\]
\end{lemma}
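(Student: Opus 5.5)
The plan is to treat $\chi_\theta$ as a map $K(X,A)\to S^1\subset\mathbb C$, with Lipschitz constant measured by $|\chi_\theta(\gamma)-\chi_\theta(\gamma')|$ against $\rho(\gamma,\gamma')$. By translation invariance of $\rho$ and multiplicativity of $\chi_\theta$,
\[
\mathrm{Lip}_\rho(\chi_\theta)=\sup_{\gamma\neq0}\frac{|\chi_\theta(\gamma)-1|}{\rho(\gamma,0)} .
\]
So it suffices to compare $|e^{i\langle\gamma,\theta\rangle}-1|$ with $\rho(\gamma,0)$. Throughout I use the elementary chord--arc inequality $\tfrac{2}{\pi}|s|_{\mathbb T}\le|e^{is}-1|\le|s|_{\mathbb T}$, where $|s|_{\mathbb T}$ is the geodesic distance from $s$ to $0$ in $\mathbb R/2\pi\mathbb Z$. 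Write $L=\mathrm{Lip}_{\overline d_1}(\phi_\theta)$.

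\emph{Upper bound.} Write $\gamma=\alpha-\beta$ with $\alpha,\beta\in D(X,A)$. Then $\rho(\gamma,0)=W_1(\alpha,\beta)$. Since $X\setminus A$ is finite, an optimal matching exists. It pairs points $x_k\in X/A$ with $y_k\in X/A$, where unmatched points are paired with the basepoint $[A]$, and $W_1(\alpha,\beta)=\sum_k\overline d_1(x_k,y_k)$. Because $\phi_\theta([A])=0$, we have
\[
\langle\gamma,\theta\rangle\equiv\sum_k\bigl(\phi_\theta(x_k)-\phi_\theta(y_k)\bigr)\pmod{2\pi}.
\]
The triangle inequality in $\mathbb R/2\pi\mathbb Z$ then gives $|\langle\gamma,\theta\rangle|_{\mathbb T}\le\sum_k L\,\overline d_1(x_k,y_k)=L\rho(\gamma,0)$. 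Combining with $|e^{is}-1|\le|s|_{\mathbb T}$ yields $\mathrm{Lip}_\rho(\chi_\theta)\le L$.

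\emph{Lower bound.} Test against single-point differences $\gamma=x-y$ with $x,y\in X/A$, where $[A]$ is interpreted as $0$. Here $\rho(x-y,0)=W_1(x,y)$, and I claim $W_1(x,y)=\overline d_1(x,y)$. The candidate matchings are either $x$ with $y$, or both points sent to the diagonal; the cost of the latter is $d(x,A)+d(y,A)$, which is already absorbed in the definition of $d_1$. Also $\langle x-y,\theta\rangle=\phi_\theta(x)-\phi_\theta(y)$. The chord inequality then gives
\[
\mathrm{Lip}_\rho(\chi_\theta)\ \ge\ \frac{2}{\pi}\,\frac{|\phi_\theta(x)-\phi_\theta(y)|_{\mathbb T}}{\overline d_1(x,y)} .
\]
Taking the supremum over $x\ne y$ in $X/A$ gives $\mathrm{Lip}_\rho(\chi_\theta)\ge\tfrac{2}{\pi}L$.

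\emph{Main obstacle.} The delicate step is the upper bound. It requires the exact identification of $\rho(\alpha-\beta,0)$ as a sum of ground distances over a matching that includes diagonal pairings, together with the fact that reducing the phases modulo $2\pi$ interacts correctly with the triangle inequality. I will justify the matching description directly from the definition of $W_1$ in \cite{bubenik2022virtualpd}. In the finite setting the infimum is attained, and diagonal matches are handled uniformly because $[A]$ is a single point with phase $0$. If attainment caused trouble, an $\varepsilon$-optimal matching would suffice.
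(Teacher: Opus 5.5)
Your proposal is correct and follows essentially the same argument the paper gives for this comparison, which it writes out in the proof of the uniformly discrete analogue, Lemma~\ref{lem:char-lip-comparison-uniform}. Both arguments reduce by translation invariance to $\sup_{\gamma\neq0}|\chi(\gamma)-1|/\rho(\gamma,0)$. The upper bound then comes from an optimal matching of $\alpha$ against $\beta$, using $|e^{is}-1|\le|s|_{\mathbb T}$ and the triangle inequality on the circle. The lower bound comes from testing on $\gamma=e_u-e_v$ (with $e_{[A]}=0$), where $\rho(\gamma,0)=\overline d_1(u,v)$ and $2\sin(\delta/2)\ge\frac{2}{\pi}\delta$.
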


Let $w_{\min}$ and $w_{\max}$ denote the minimal and maximal nonzero edge weights, and let $d_{\min}$ and $d_{\max}$ denote the minimal and maximal nonzero edge lengths, in the weighted graph model of $(X/A,\overline d_1)$ used in \cite{fanning2025reproducingkernelhilbertspaces}. Let $\lambda(\theta)$ denote the corresponding Laplacian symbol on $\mathbb T^{|X\setminus A|}$.

\begin{lemma}[{\cite[Lemma~4]{fanning2025reproducingkernelhilbertspaces}}]
\label{lem:lambda-vs-L}
For every $\theta\in\mathbb T^{|X\setminus A|}$,
\[
\frac{2\,w_{\min}\,d_{\min}^2}{\pi^2}\,
\mathrm{Lip}_\rho(\chi_\theta)^2
\ \le\
\lambda(\theta)
\ \le\
\frac{\pi^2}{4}\,w_{\max}\,|X\setminus A|\,d_{\max}^2\,
\mathrm{Lip}_\rho(\chi_\theta)^2.
\]
\end{lemma}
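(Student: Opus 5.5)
This bound follows by composing Lemma~\ref{lem:phase-vs-lip} with an explicit comparison between the graph-Laplacian symbol $\lambda(\theta)$ and the phase Lipschitz constant $\mathrm{Lip}_{\overline d_1}(\phi_\theta)$. In the finite weighted-graph model, write the symbol as a sum over edges,
\[
\lambda(\theta)=\sum_{e=\{x,y\}} w_e\bigl(1-\cos(\phi_\theta(x)-\phi_\theta(y))\bigr),
\]
where the edges include those to the basepoint $[A]$, on which $\phi_\theta=0$. Normalizations of the form $\sum_e w_e|e^{i\phi(x)}-e^{i\phi(y)}|^2$ only change constants by a factor of $2$, so I will fix the convention of \cite{fanning2025reproducingkernelhilbertspaces} and track that factor. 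The scalar inequality
\[
\frac{2}{\pi^2}\,|s|^2\le 1-\cos s\le \frac{|s|^2}{2},\qquad |s|\le\pi,
\]
with $|s|$ the geodesic distance on $\mathbb R/2\pi\mathbb Z$, converts each edge term into $w_e\,\mathrm{dist}(\phi_\theta(x),\phi_\theta(y))^2$, up to constants.

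\textbf{Upper bound.} For each edge of length $d_e\le d_{\max}$ we have $\mathrm{dist}(\phi_\theta(x),\phi_\theta(y))\le \mathrm{Lip}_{\overline d_1}(\phi_\theta)\,d_e$. Summing over edges gives at most $w_{\max}\,d_{\max}^2\,\mathrm{Lip}(\phi_\theta)^2$ times the number of edges. To get the stated factor $|X\setminus A|$, I would use that the graph model has edge count controlled by $|X\setminus A|$ (for instance, each vertex carries one boundary edge to $[A]$ plus its neighbours, as the model in the cited paper is set up). Then the right inequality of Lemma~\ref{lem:phase-vs-lip}, inverted as $\mathrm{Lip}(\phi_\theta)\le\frac{\pi}{2}\mathrm{Lip}_\rho(\chi_\theta)$, produces the factor $\pi^2/4$.

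\textbf{Lower bound.} This is the main obstacle: a single edge term must control the global Lipschitz constant. I would argue that, since $\overline d_1$ is the path metric of the weighted graph, the Lipschitz constant of $\phi_\theta$ is attained on some edge $e^*$. Indeed, along a geodesic path, the ratio $\mathrm{dist}(\phi(x),\phi(y))/\overline d_1(x,y)$ is bounded by the maximum of the edge ratios, by the triangle inequality and a mediant argument. Hence
\[
\mathrm{Lip}(\phi_\theta)^2\,d_{\min}^2\le \mathrm{Lip}(\phi_\theta)^2 d_{e^*}^2=\mathrm{dist}^2\le\frac{\pi^2}{2}\,(1-\cos)\le \frac{\pi^2}{2w_{\min}}\,\lambda(\theta).
\]
Combining this with $\mathrm{Lip}_\rho(\chi_\theta)\le\mathrm{Lip}(\phi_\theta)$ from Lemma~\ref{lem:phase-vs-lip} yields the left inequality, with constant $2w_{\min}d_{\min}^2/\pi^2$. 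The care needed is in checking that the edge-attainment step holds for $\overline d_1$, including routes through $[A]$; this works because $[A]$ is a vertex of the graph and $d_1$ is the induced path metric.
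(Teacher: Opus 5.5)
The paper does not prove this lemma; it imports it from \cite{fanning2025reproducingkernelhilbertspaces}, so your argument has to stand on its own. Your lower bound does, provided $\overline d_1$ is the shortest-path metric of the weighted graph. In that case the mediant argument puts the Lipschitz ratio of $\phi_\theta$ on a single edge, and $1-\cos s\ge 2s^2/\pi^2$ together with $\mathrm{Lip}_\rho(\chi_\theta)\le\mathrm{Lip}_{\overline d_1}(\phi_\theta)$ gives exactly the constant $2w_{\min}d_{\min}^2/\pi^2$. One small correction for the upper bound: $\mathrm{Lip}_{\overline d_1}(\phi_\theta)\le\frac{\pi}{2}\mathrm{Lip}_\rho(\chi_\theta)$ is the left inequality of Lemma~\ref{lem:phase-vs-lip}, not the right one.

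The genuine gap is the factor $|X\setminus A|$ in the upper bound. Edge-by-edge estimation gives $\lambda(\theta)\le\frac{\pi^2}{8}\,w_{\max}\,d_{\max}^2\,|E|\,\mathrm{Lip}_\rho(\chi_\theta)^2$, so you need $|E|\le 2|X\setminus A|$. Your justification (one boundary edge at each vertex plus its neighbours) only gives $|E|=|X\setminus A|+\#\{\text{interior edges}\}$, which can be of order $|X\setminus A|^2$.

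This cannot be repaired within your setup, because the hypothesis your lower bound relies on forces many edges. Let $X/A$ be equilateral on $n+1$ points with all $\overline d_1$-distances equal to $1$, so that $n=|X\setminus A|$. Since the path metric is $\overline d_1$, every edge has length at least $1$, so any path with two or more edges has length at least $2$. Hence every pair must be an edge, and the graph is complete. Take all weights equal to $w$ and let the phases $\phi_\theta(u)$ be equally spaced at multiples of $2\pi/(n+1)$, with $\phi_\theta([A])=0$. Then
\[
\lambda(\theta)=\frac{w}{2}\Bigl((n+1)^2-\Bigl|\sum_{u\in X/A}e^{i\phi_\theta(u)}\Bigr|^2\Bigr)=\frac{w(n+1)^2}{2}.
\]
The right-hand side of the lemma is at most $\frac{\pi^4}{4}\,w\,n$, because $d_{\max}=1$ and $\mathrm{Lip}_\rho(\chi_\theta)\le\mathrm{Lip}_{\overline d_1}(\phi_\theta)\le\pi$. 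So the stated inequality fails for all $n\ge 50$, and in the $|e^{i\phi(x)}-e^{i\phi(y)}|^2$ normalization it fails even sooner.

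The two halves of your proof therefore rest on properties of the graph that are incompatible in general. The upper bound needs a sparse model with $O(|X\setminus A|)$ edges. Edge attainment of the Lipschitz constant needs the graph to realize $\overline d_1$ as its path metric, which can force $\binom{n+1}{2}$ edges. The two are compatible only in special cases, for instance when $\overline d_1$ is the path metric of a tree on $X/A$.

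To finish, you would have to identify the specific edge set and weights of the model in \cite{fanning2025reproducingkernelhilbertspaces} and verify that it has both properties, or find a different mechanism for the upper bound. The generic edge-counting argument does not establish the stated inequality.
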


\begin{corollary}[Spectral form {\cite[Corollary~2]{fanning2025reproducingkernelhilbertspaces}}]
\label{cor:heat-lip-spectral}
For every $t>0$ and every $f\in\mathcal H_t$,
\[
 \mathrm{Lip}_\rho(f)\ \le\ \frac{\pi}{d_{\min}\sqrt{2\,w_{\min}}}\,
 \|f\|_{\mathcal H_t}\,
 \Bigg(
 \int_{\mathbb T^{|X\setminus A|}}
 \lambda(\theta)\,e^{-t\lambda(\theta)}\,d\mu(\theta)
 \Bigg)^{1/2},
\]
with $w_{\min}$ and $d_{\min}$ as in Lemma~\ref{lem:lambda-vs-L}.
\end{corollary}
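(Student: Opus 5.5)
We prove Corollary~\ref{cor:heat-lip-spectral} by combining a reproducing-kernel estimate for differences of $f$ with the lower bound of Lemma~\ref{lem:lambda-vs-L}. Throughout, $\mathcal H_t$ is the reproducing kernel Hilbert space of the heat kernel
\[
K_t(g,h)=\int_{\mathbb T^{|X\setminus A|}}\chi_\theta(h-g)\,e^{-t\lambda(\theta)}\,d\mu(\theta).
\]

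\emph{Step 1 (reduction to the kernel metric).} For $f\in\mathcal H_t$ and $g,h\in K(X,A)$, the reproducing property and Cauchy--Schwarz give
\[
|f(g)-f(h)|=|\langle f,K_t(\cdot,g)-K_t(\cdot,h)\rangle|\le \|f\|_{\mathcal H_t}\,\|K_t(\cdot,g)-K_t(\cdot,h)\|_{\mathcal H_t}.
\]
The norm on the right is the kernel semimetric. It suffices to show
\[
\|K_t(\cdot,g)-K_t(\cdot,h)\|_{\mathcal H_t}\le \frac{\pi}{d_{\min}\sqrt{2w_{\min}}}\Bigl(\int\lambda e^{-t\lambda}\,d\mu\Bigr)^{1/2}\rho(g,h).
\]

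\emph{Step 2 (spectral expansion of the semimetric).} Expanding the squared norm with the Fourier formula for $K_t$ gives
\[
\|K_t(\cdot,g)-K_t(\cdot,h)\|^2_{\mathcal H_t}=\int_{\mathbb T^{|X\setminus A|}} |\chi_\theta(g-h)-1|^2\,e^{-t\lambda(\theta)}\,d\mu(\theta),
\]
using $K_t(g,g)+K_t(h,h)-2\Re K_t(g,h)$ and $2-2\cos=|e^{i\cdot}-1|^2$.

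\emph{Step 3 (pointwise bound on characters).} Since $\chi_\theta(0)=1$, the definition of the Lipschitz seminorm gives
\[
|\chi_\theta(g-h)-1|\le \mathrm{Lip}_\rho(\chi_\theta)\,\rho(g-h,0)=\mathrm{Lip}_\rho(\chi_\theta)\,\rho(g,h),
\]
where the last equality uses translation invariance of $\rho$. The lower bound of Lemma~\ref{lem:lambda-vs-L} gives
\[
\mathrm{Lip}_\rho(\chi_\theta)^2\le \frac{\pi^2}{2w_{\min}d_{\min}^2}\,\lambda(\theta).
\]
Substituting into Step 2 yields
\[
\|K_t(\cdot,g)-K_t(\cdot,h)\|^2_{\mathcal H_t}\le \frac{\pi^2}{2w_{\min}d_{\min}^2}\,\rho(g,h)^2\int\lambda e^{-t\lambda}\,d\mu.
\]
Taking square roots and combining with Step 1 gives the stated inequality.

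\emph{Main obstacle.} The one delicate point is Step 3: it requires $\mathrm{Lip}_\rho(\chi_\theta)$ to be finite, and it requires the constant in Lemma~\ref{lem:lambda-vs-L} to be used in the correct direction (the lower bound on $\lambda$, inverted). Finiteness of $\int\lambda e^{-t\lambda}\,d\mu$ is automatic, since $\lambda e^{-t\lambda}\le 1/(et)$ and $\mu$ is a probability measure. The remaining steps are standard.
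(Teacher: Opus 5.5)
Your proof is correct and follows the same route the paper takes: the corollary is imported from the finite-rank paper, but the identical chain appears in the proof of Theorem~\ref{thm:lip-rkhs-invariants} and in part 2(c) of Theorem~\ref{thm:heat-scale-majorization}. That chain uses the reproducing property and Cauchy--Schwarz to reduce to $\int|\chi_\theta(g-h)-1|^2e^{-t\lambda}\,d\mu$, then $|\chi_\theta(\gamma)-1|\le\mathrm{Lip}_\rho(\chi_\theta)\,\rho(\gamma,0)$ together with the inverted lower bound of Lemma~\ref{lem:lambda-vs-L}, which yields exactly the constant $\pi/(d_{\min}\sqrt{2w_{\min}})$; the finiteness concern in your last paragraph is already settled by that same lower bound, since it gives $\mathrm{Lip}_\rho(\chi_\theta)^2\le \frac{\pi^2}{2w_{\min}d_{\min}^2}\lambda(\theta)<\infty$.
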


\begin{figure}[t]
 \centering
 \includegraphics[width=0.5\linewidth]{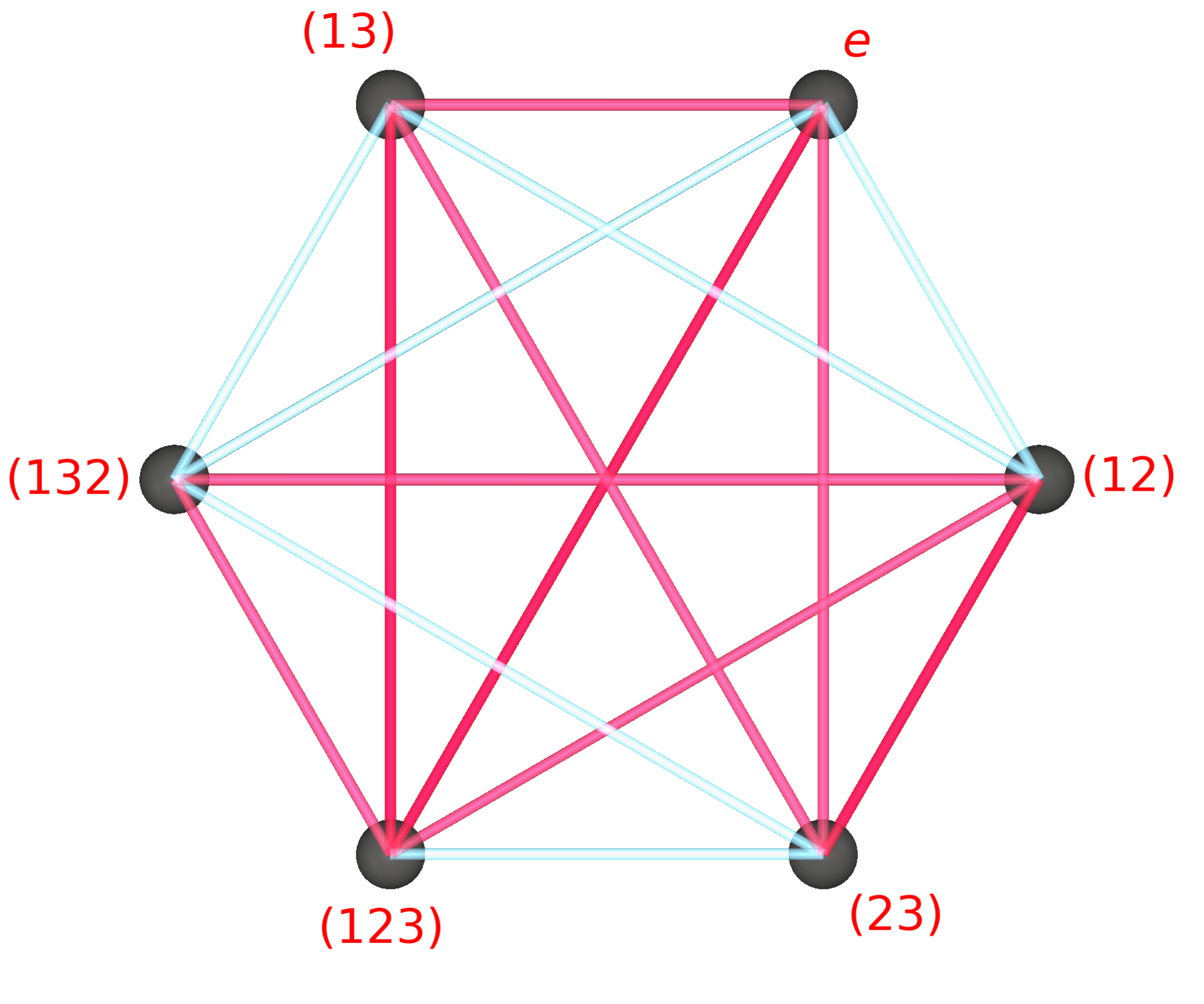}
 \caption{The generators of the virtual persistence diagram group $K(X,A)$ induced by the Cayley graph of $S_3$ with its word metric.}
 \label{fig:rkhs-finite-s3}
\end{figure}

Figure~\ref{fig:rkhs-finite-s3} visualizes a subclass of translation--invariant kernels in the reproducing kernel Hilbert space $\mathcal H_t$ associated with the heat semigroup on the virtual persistence diagram group $K(X,A),$ pairwise--distance kernels whose values on virtual persistence diagrams $g,h\in K(X,A)$ are determined by a fixed $\psi:[0,\infty)\to\mathbb R$ by $\psi(r)=e^{-\alpha r^2}$ through
\[
k(g,h)
=
\sum_{x,y\in X/A} m_g(x)\,m_h(y)\,\psi\!\left(\overline d_1(x,y)\right),
\]
where $m_g(x)\in\mathbb Z$ denotes the signed multiplicity of the generator $x\in X/A$ in $g$. For each pair of generators $x,y\in X/A$, the edge connecting them is colored according to the value $\psi(\overline d_1(x,y))=k(\delta_x,\delta_y)$, with lighter colors corresponding to smaller word--metric distances and darker colors to larger distances. An arbitrary virtual persistence diagram may therefore be visualized by placing its signed multiplicities on the vertices of the graph, and kernel evaluations within this subclass are obtained by summing products of vertex multiplicities weighted by the edge values encoded by the coloring. In this way, the entire family of pairwise--distance kernels with fixed profile $\psi$ is represented by the edge--colored Cayley graph.

\subsubsection{Discrete virtual persistence diagrams}

Assume that the pointed metric space $(X/A,\overline d_1,[A])$ is uniformly discrete. Then $(K(X,A),\rho)$ is a discrete locally compact abelian group \cite{fanning2026reproducingkernelhilbertspaces}. In particular, $K(X,A)\cong\bigoplus_{x\in X\setminus A}\mathbb Z$, and its Pontryagin dual is $\widehat K\cong\prod_{x\in X\setminus A}\mathbb T$.

\subsection{Markov semigroups}\label{subsec:background-markov}

Let $H$ be a countable discrete abelian group. A family $\{P(t),t\ge0\}$
of bounded linear operators on $\ell^\infty(H)$ is called a
\emph{Markov semigroup}~\cite[Def.~3.4]{liggett2010ctmp} if:

\begin{enumerate}
 \item[(a)] $P(0)f = f$ for all $f\in\ell^\infty(H)$.
 \item[(b)] For every $f\in\ell^\infty(H)$,
 $\displaystyle\lim_{t\downarrow0}\|P(t)f - f\|_\infty = 0$.
 \item[(c)] $P(s+t)f = P(s)P(t)f$ for every $f\in\ell^\infty(H)$
 and all $s,t\ge0$.
 \item[(d)] $P(t)f \ge 0$ for every nonnegative
 $f\in\ell^\infty(H)$ and every $t\ge0$.
 \item[(e)] $P(t)\mathbf 1 = \mathbf 1$ for each $t>0$.
\end{enumerate}

The semigroup is called \emph{symmetric} if \(p_t(h)=p_t(-h), \) for all $h \in H$ and $t \geq 0.$

For $\alpha\in H$, define the translation operator by $(\tau_\alpha f)(\beta)=f(\beta-\alpha)$ for $\beta\in H$. A Markov semigroup $(P(t))_{t\ge0}$ on $\ell^\infty(H)$ is called \emph{translation invariant} if $P(t)\tau_\alpha=\tau_\alpha P(t)$ for all $t\ge0$ and $\alpha\in H$.

If $(P(t))_{t\ge0}$ is a symmetric, strongly continuous Markov semigroup on $\ell^\infty(H)$, then it extends to a strongly continuous contraction semigroup on $\ell^2(H)$. Consequently there exists a densely defined, nonnegative, self-adjoint operator $L$ on $\ell^2(H)$ such that $P(t)=e^{-tL}$ for $t\ge0$ on $\ell^2(H)$. The operator $-L$ is called the (infinitesimal) generator of the semigroup.

\section{Infinite-dimensional heat kernel RKHS on discrete VPD groups}
\label{sec:heat-rkhs}

In the case where \(X\setminus A\) is finite, the virtual persistence diagram group \(K(X,A)\) is a discrete locally compact abelian group with Pontryagin dual canonically identified as a torus \(\widehat K\cong\mathbb T^{|X\setminus A|}\) \cite{fanning2025reproducingkernelhilbertspaces}. When \(X\setminus A\) is infinite, the group \(K(X,A)\) need not be locally compact, and the Pontryagin duality used in the finite case is no longer applicable \cite{fanning2026reproducingkernelhilbertspaces}. The virtual persistence diagram group \(K(X,A)\) is locally compact abelian if and only if the pointed metric space \((X/A,\overline d_1,[A])\) is uniformly discrete. Motivated by this equivalence, we now fix uniform discreteness as a standing assumption for the remainder of the section.

\begin{definition}\label{def:standing-assumption}
The \emph{standing assumption} is:
\begin{enumerate}
\item[(H)]
The pointed metric space \((X/A,\overline d_1,[A])\) is uniformly discrete.
\end{enumerate}
\end{definition}

Under assumption~\emph{(H)}, the virtual persistence diagram group \((K(X,A),\rho)\) is a locally compact abelian group.

We construct a symmetric translation-invariant jump kernel on \(K(X,A)\) supported on elementary pair-jumps. Under the identification fixed above, \(K(X,A)\cong \bigoplus_{x\in X\setminus A}\mathbb Z e_x\), we write \(e_x\) for the basis element corresponding to \(x\in X\setminus A\). The finite case assigns a weight to each ordered pair \((x,y)\) with \(x\neq y\) and associates to it the increment \(e_x-e_y\); extending this to infinite \(X\setminus A\) leads to assigning weights to all such pairs simultaneously. In that case, the total outgoing jump rate is 
\[
\sum_{\substack{(x,y)\in (X\setminus A)\times (X\setminus A) \\ x\neq y}} \psi\bigl(\overline d_1(x,y)\bigr),
\]
which may be infinite. In particular, the sum of the weights of all admissible pair-jumps need not be finite, so the naive infinite extension does not define a finite symmetric kernel. To obtain a well-defined object with finite total mass, we impose the summability condition \eqref{eq:psi-summability}.

Let \(\psi:[0,\infty)\to[0,\infty)\) satisfy \(\psi(0)=0\). Define $E = \{(x,y)\in (X\setminus A)\times (X\setminus A) : x\neq y\}$.
We assume
\begin{equation}\label{eq:psi-summability}
\sum_{(x,y)\in E}\psi\bigl(\overline d_1(x,y)\bigr)<\infty,
\end{equation}
where, since \(E\) may be uncountable, the sum is understood as the supremum of the sums over finite subsets of \(E\).

\begin{lemma}\label{lem:countable-support}
The set $\{(x,y)\in E:\psi\bigl(\overline d_1(x,y)\bigr)>0\}$ is countable.
\end{lemma}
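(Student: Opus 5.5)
The plan is the standard argument that a nonnegative family with finite unordered sum has countable support. Write $S=\sum_{(x,y)\in E}\psi(\overline d_1(x,y))$. By \eqref{eq:psi-summability} and the convention that the sum is the supremum over finite subsets, $S<\infty$. For each $n\in\mathbb N$, $n\ge1$, set
\[
E_n=\bigl\{(x,y)\in E:\ \psi(\overline d_1(x,y))>1/n\bigr\}.
\]
Since $\psi\ge0$, the target set $\{(x,y)\in E:\psi(\overline d_1(x,y))>0\}$ equals $\bigcup_{n\ge1}E_n$. It therefore suffices to show that each $E_n$ is finite.

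To show this, fix $n$ and take any finite subset $F\subseteq E_n$. Because every term is nonnegative and $F\subseteq E$, the definition of the sum as a supremum gives
\[
\frac{|F|}{n}<\sum_{(x,y)\in F}\psi(\overline d_1(x,y))\le S
\]
(the strict inequality for $F\neq\varnothing$). Hence $|F|<nS$ for every finite $F\subseteq E_n$. This forces $|E_n|\le \lfloor nS\rfloor$, since otherwise $E_n$ would contain a finite subset of larger cardinality. A countable union of finite sets is countable, so the conclusion follows.

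I do not expect a genuine obstacle. The only point needing care is to apply the supremum definition of the possibly uncountable sum only to finite subfamilies, and this is where nonnegativity of $\psi$ is used. No use of the standing assumption (H) is needed.
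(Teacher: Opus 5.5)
Your proof is correct and takes essentially the same route as the paper: both split the support into the level sets $\{\psi>1/n\}$, show each is finite by comparing $|F|/n$ with the sum \eqref{eq:psi-summability} over finite subfamilies, and conclude with a countable union. Your version also records the explicit bound $|E_n|\le\lfloor nS\rfloor$, which the paper's contradiction argument leaves implicit.
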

\begin{proof}
For each \(m\in\mathbb N\), set
\[
A_m=\left\{(x,y)\in E:\psi\bigl(\overline d_1(x,y)\bigr)>\frac{1}{m}\right\}.
\]
If some \(A_m\) were infinite, then for each \(N\in\mathbb N\) there would exist a subset \(J\subset A_m\) with \(|J|=N\), and hence
\[
\sum_{(x,y)\in J}\psi\bigl(\overline d_1(x,y)\bigr)>\frac{N}{m},
\]
which contradicts \eqref{eq:psi-summability} once \(N\) is sufficiently large. Thus each \(A_m\) is finite. Since
\[
\{(x,y)\in E:\psi\bigl(\overline d_1(x,y)\bigr)>0\}
=\bigcup_{m=1}^\infty A_m,
\]
the set is countable.
\end{proof}

We define a function $\nu:K(X,A)\setminus\{0\}\to[0,\infty)$, supported on pair-jumps, by
\begin{equation}\label{eq:nu}
\nu(\kappa)
=
\begin{cases}
\dfrac{1}{2}\psi\bigl(\overline d_1(x,y)\bigr), & \kappa=e_x-e_y,\ x,y\in X\setminus A,\ x\neq y,\\[0.5em]
0, & \text{otherwise}.
\end{cases}
\end{equation}
Since $\{e_x : x\in X\setminus A\}$ is a free generating set, the map $(x,y)\mapsto e_x-e_y$ is injective on ordered pairs with $x\neq y$. Therefore
\[
\sum_{\kappa\in K(X,A)\setminus\{0\}}\nu(\kappa)
=
\frac{1}{2}
\sum_{(x,y)\in E}\psi\bigl(\overline d_1(x,y)\bigr)
<\infty.
\]
Also, $\nu(-\kappa)=\nu(\kappa)$ for all $\kappa$, and $\nu\in\ell^1(K(X,A)\setminus\{0\})$.

The Fourier symbol $\lambda^\psi:\widehat{K(X,A)}\to[0,\infty)$ is defined by
\begin{equation}\label{eq:global-symbol}
\lambda^\psi(\theta)
=
\sum_{\kappa\in K(X,A)\setminus\{0\}}
\nu(\kappa)\bigl(1-\Re \theta(\kappa)\bigr).
\end{equation}
Since \(0\le 1-\Re\theta(\kappa)\le 2\) for all \(\theta\in\widehat{K(X,A)}\) and all \(\kappa\), the series in \eqref{eq:global-symbol} converges absolutely and uniformly on \(\widehat{K(X,A)}\). In particular, $\lambda^\psi$ is well defined, continuous as a uniform limit of continuous functions, and nonnegative.

To approximate the global symbol by finite-rank truncations, fix a finite set \(F\subset X\setminus A\) and set \(X_F=A\cup F\). Then
\[
K(X_F,A)\cong \bigoplus_{x\in F}\mathbb Z e_x.
\]

We define the truncated symbol $\lambda_F^{D,\psi}:\widehat{K(X_F,A)}\to[0,\infty)$ by
\begin{equation}\label{eq:finite-symbol}
\begin{aligned}
\lambda_F^{D,\psi}(\chi)
&=
\frac{1}{2}
\sum_{\substack{x,y\in F\\ x\neq y}}
\psi\bigl(\overline d_1(x,y)\bigr)
\bigl(1-\Re \chi(e_x-e_y)\bigr) \\
&+
\sum_{x\in F}
\left(
\sum_{y\in (X\setminus A)\setminus F}
\psi\bigl(\overline d_1(x,y)\bigr)
\right)
\bigl(1-\Re \chi(e_x)\bigr).
\end{aligned}
\end{equation}
For each \(x\in F\), the inner sum is finite by \eqref{eq:psi-summability}, so \eqref{eq:finite-symbol} is well defined. The first term corresponds to pairs in $F$, and the second to pairs with exactly one endpoint in $F$.

\begin{lemma}\label{lem:projection}
For every finite \(F\subset X\setminus A\) and every \(\chi\in \widehat{K(X_F,A)}\),
\begin{equation}\label{eq:projection}
\lambda^\psi(\chi\circ \pi_F)=\lambda_F^{D,\psi}(\chi).
\end{equation}
\end{lemma}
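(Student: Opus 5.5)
The plan is to expand the global symbol $\lambda^\psi$ via the definition \eqref{eq:nu} of $\nu$ as a sum over ordered pairs in $E$, and to evaluate the pulled-back character pair by pair. Here $\pi_F\colon K(X,A)\to K(X_F,A)$ is the projection induced by collapsing $(X\setminus A)\setminus F$ into the diagonal. On the basis it acts by $\pi_F(e_x)=e_x$ for $x\in F$ and $\pi_F(e_y)=0$ for $y\notin F$. Writing $\theta=\chi\circ\pi_F$, we have $\theta(e_x)=\chi(e_x)$ for $x\in F$ and $\theta(e_y)=1$ for $y\in(X\setminus A)\setminus F$. The map $(x,y)\mapsto e_x-e_y$ is injective on $E$ and $\nu$ vanishes off its image, so
\[
\lambda^\psi(\theta)=\frac12\sum_{(x,y)\in E}\psi\bigl(\overline d_1(x,y)\bigr)\bigl(1-\Re\,\theta(e_x-e_y)\bigr).
\]

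Next I will split $E$ into four disjoint classes according to membership of $x$ and $y$ in $F$.

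\emph{Both endpoints outside $F$.} Then $\theta(e_x-e_y)=1$, so these terms vanish.

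\emph{Both endpoints in $F$.} Then $\theta(e_x-e_y)=\chi(e_x-e_y)$, and these terms reproduce exactly the first sum in \eqref{eq:finite-symbol}.

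\emph{$x\in F$, $y\notin F$.} Then $\theta(e_x-e_y)=\chi(e_x)$.

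\emph{$x\notin F$, $y\in F$.} Then $\theta(e_x-e_y)=\overline{\chi(e_y)}$, whose real part equals $\Re\chi(e_y)$.

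For the last class I relabel $(x,y)\mapsto(y,x)$ and use the symmetry $\overline d_1(x,y)=\overline d_1(y,x)$. It then contributes the same amount as the third class. Adding the two mixed classes removes the factor $\tfrac12$ and gives
\[
\sum_{x\in F}\Bigl(\sum_{y\in(X\setminus A)\setminus F}\psi\bigl(\overline d_1(x,y)\bigr)\Bigr)\bigl(1-\Re\chi(e_x)\bigr),
\]
which is the boundary term of \eqref{eq:finite-symbol}.

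The only delicate point is justifying the splitting and regrouping of a possibly uncountable sum. By Lemma~\ref{lem:countable-support}, only countably many terms are nonzero. Each term is nonnegative and bounded by $\psi(\overline d_1(x,y))$, which is summable by \eqref{eq:psi-summability}. Unordered sums of nonnegative terms, defined as suprema over finite subsets, may therefore be partitioned and rearranged freely, and Fubini for counting measure lets me write the mixed part as an iterated sum over $x\in F$ and then over $y$. I expect no further obstacle beyond stating the convention for $\pi_F$ precisely, since the identity is a bookkeeping computation once that is fixed.
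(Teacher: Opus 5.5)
Your proposal is correct and follows the paper's own proof: you expand $\lambda^\psi(\chi\circ\pi_F)$ as a sum over ordered pairs in $E$, split according to whether each endpoint lies in $F$, and combine the two mixed classes using $\overline d_1(x,y)=\overline d_1(y,x)$ and $\Re\chi(-e_y)=\Re\chi(e_y)$ to obtain the boundary term. You also justify the regrouping through nonnegativity and countable support, which is slightly more careful than the paper's appeal to absolute convergence, but the argument is the same.
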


\begin{proof}
Using \eqref{eq:global-symbol} and absolute convergence, we write
\[
\lambda^\psi(\chi\circ \pi_F)
=
\frac{1}{2}
\sum_{\substack{x,y\in X\setminus A\\ x\neq y}}
\psi\bigl(\overline d_1(x,y)\bigr)
\bigl(1-\Re (\chi\circ \pi_F)(e_x-e_y)\bigr).
\]
If $x,y\in F$, then $\pi_F(e_x-e_y)=e_x-e_y$, and these terms give the first sum in \eqref{eq:finite-symbol}. If $x\in F$ and $y\notin F$, then $\pi_F(e_x-e_y)=e_x$. If $x\notin F$ and $y\in F$, then $\pi_F(e_x-e_y)=-e_y$, and $1-\Re\chi(-e_y)=1-\Re\chi(e_y)$.
Hence, for each fixed \(x\in F\), the contributions from the ordered pairs \((x,y)\) and \((y,x)\) with \(y\in (X\setminus A)\setminus F\) combine to
\[
\frac{1}{2}\psi\bigl(\overline d_1(x,y)\bigr)\bigl(1-\Re\chi(e_x)\bigr)
+
\frac{1}{2}\psi\bigl(\overline d_1(y,x)\bigr)\bigl(1-\Re\chi(e_x)\bigr)
=
\psi\bigl(\overline d_1(x,y)\bigr)\bigl(1-\Re\chi(e_x)\bigr),
\]
where we used the symmetry \(\overline d_1(y,x)=\overline d_1(x,y)\). Summing over \(y\in (X\setminus A)\setminus F\) gives exactly the boundary term in \eqref{eq:finite-symbol}. If \(x,y\notin F\), then \(\pi_F(e_x-e_y)=0\), so the contribution vanishes. This proves \eqref{eq:projection}. Since \eqref{eq:finite-symbol} is a finite nonnegative linear combination of functions of the form $1-\Re\chi(\eta)$, it is continuous.
\end{proof}

We define the support of $\nu$ by
$\operatorname{supp}(\nu) = \{\kappa \in K(X,A)\setminus\{0\} : \nu(\kappa)>0\}$.
By Lemma~\ref{lem:countable-support} and the definition \eqref{eq:nu}, the set $\operatorname{supp}(\nu)$ is countable.

Thus the finite-rank symbol is obtained by pulling back the global symbol along \(\pi_F\).

\begin{theorem}\label{thm:global}
There exists a unique convolution semigroup of probability measures \((p_t^\psi)_{t\ge 0}\) on \(K(X,A)\) such that, for every \(\theta\in \widehat{K(X,A)}\),
\begin{equation}\label{eq:fourier}
\widehat{p_t^\psi}(\theta)=e^{-t\lambda^\psi(\theta)}.
\end{equation}
\end{theorem}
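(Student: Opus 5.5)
The plan is to construct $p_t^\psi$ explicitly as a compound Poisson law and then read off \eqref{eq:fourier}. Set $c=\sum_{\kappa\neq 0}\nu(\kappa)$. This is finite by \eqref{eq:psi-summability}, and $\nu$ is supported on the countable set $\operatorname{supp}(\nu)$. Since $K(X,A)$ is discrete under (H), $\nu$ defines a finite positive measure on $K(X,A)$ with countable support, and convolution powers $\nu^{*n}$ are well-defined finite measures supported in the countable subgroup generated by $\operatorname{supp}(\nu)$. Define
\[
p_t^\psi=e^{-tc}\sum_{n=0}^{\infty}\frac{t^n}{n!}\,\nu^{*n},
\]
with $\nu^{*0}=\delta_0$. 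The series converges in total variation because $\|\nu^{*n}\|\le c^n$. Each $p_t^\psi$ is a positive measure of total mass $e^{-tc}e^{tc}=1$, hence a probability measure. It is symmetric since $\nu(-\kappa)=\nu(\kappa)$.

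For the Fourier transform, fix a character $\theta\in\widehat{K(X,A)}$. By absolute convergence, $\widehat{\nu}(\theta)=\sum_\kappa\nu(\kappa)\theta(\kappa)$, and this is real by the symmetry of $\nu$. The transform is multiplicative under convolution, so
\[
\widehat{p_t^\psi}(\theta)=e^{-tc}e^{t\widehat\nu(\theta)}=e^{-t(c-\widehat\nu(\theta))}.
\]
Moreover $c-\widehat\nu(\theta)=\sum_\kappa\nu(\kappa)(1-\Re\theta(\kappa))=\lambda^\psi(\theta)$ by \eqref{eq:global-symbol}. The semigroup property $p_s^\psi*p_t^\psi=p_{s+t}^\psi$ follows in either of two ways: from the Cauchy product of the exponential series, or from the multiplicativity of the transforms combined with uniqueness. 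We also have $p_0^\psi=\delta_0$, and weak continuity in $t$ follows from total-variation continuity of the series.

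For uniqueness, the plan is to use the fact that the Fourier--Stieltjes transform is injective on finite measures on the LCA group $K(X,A)$. Here $K(X,A)$ is discrete and its dual $\widehat{K(X,A)}\cong\prod\mathbb T$ is compact. Concretely, if $\mu$ is a finite (hence countably supported, $\ell^1$) measure on a discrete abelian group whose transform vanishes identically, then every coefficient $\mu(\{g\})=\int\widehat\mu(\theta)\overline{\theta(g)}\,d\mu_{\widehat K}(\theta)$ is zero, by orthonormality of characters in $L^2$ of the normalized Haar measure. Any two families satisfying \eqref{eq:fourier} therefore coincide for every $t$.

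The main obstacle is making sure the uncountable-rank setting causes no measure-theoretic trouble. This requires three checks: that the measures involved are genuinely countably supported, that Haar measure on the infinite product torus gives the orthogonality relations used in the uniqueness step, and that the transform of $\nu$ can be exchanged with the sum. I would handle all three by working with $\ell^1$ functions on the discrete group and invoking Fubini on countable sums.
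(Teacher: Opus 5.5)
Your proof is correct, but it takes a genuinely different route from the paper's.

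The paper never writes $p_t^\psi$ down explicitly:
\begin{itemize}
\item It shows $\lambda^\psi$ is continuous and negative definite, as a pointwise limit of the finite sums $\lambda_m^\psi$.
\item It applies Schoenberg's theorem to get positive definiteness of $e^{-t\lambda^\psi}$ on $\widehat{K(X,A)}$.
\item It then uses Bochner's theorem, invoking the local compactness given by (H), to produce a unique probability measure with that transform.
\item It reads off the semigroup law from $e^{-(s+t)\lambda^\psi}=e^{-s\lambda^\psi}e^{-t\lambda^\psi}$.
\end{itemize}
Only afterwards does it introduce the bounded operator $L^\psi$ on $\ell^1$ and prove $P_t=e^{-tL^\psi}$. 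That step is what gives the $\ell^1$-continuity of $t\mapsto p_t^\psi$ needed in Lemma~\ref{lem:effective-support}.

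Your series is exactly $e^{-tL^\psi}\delta_0$, because for symmetric $\nu$ one has $L^\psi f=cf-\nu*f$. This buys you several things at once:
\begin{itemize}
\item Positivity, unit mass, symmetry, the semigroup law and total-variation continuity all come directly from the series.
\item Existence needs no Schoenberg/Bochner machinery.
\item Uniqueness reduces to the elementary injectivity of the Fourier transform on $\ell^1$ of a discrete group.
\item Since the $n=1$ term is $e^{-tc}t\nu$, you get $\operatorname{supp}(\nu)\subset\operatorname{supp}(p_t^\psi)\subset\langle\operatorname{supp}(\nu)\rangle$ for $t>0$. Hence $H=\langle\operatorname{supp}(\nu)\rangle$, and Corollary~\ref{cor:finite-activity-compound-poisson} becomes nearly immediate with $q=c$ and $\pi=\nu/c$.
\end{itemize}

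The paper's route uses only negative definiteness of the symbol, not finiteness of $c$. On a discrete group this is no real extra generality, since the L\'evy measure is summable anyway, as the proof of Theorem~\ref{thm:lk-H} shows.

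One small caveat concerns your phrase ``finite (hence countably supported)''. This should be justified by the Radon convention: inner regularity on a discrete space forces $\mu(B)=\sum_{g\in B}\mu(\{g\})$. Finiteness alone does not give it. This is the same convention implicit in the paper's Bochner step and in its later treatment of $p_t^\psi$ as a probability mass function.
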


\begin{proof}
The series \eqref{eq:global-symbol} converges absolutely and uniformly, so \(\lambda^\psi\) is bounded and continuous as a uniform limit of bounded continuous functions.

Let \(\theta_1,\dots,\theta_n\in \widehat{K(X,A)}\) and \(c_1,\dots,c_n\in\mathbb C\) satisfy \(\sum_{j=1}^n c_j=0\). Enumerate the countable support of \(\nu\) as \(\{\kappa_1,\kappa_2,\dots\}\), and define
\[
\lambda_m^\psi(\theta)
=
\sum_{k=1}^m
\nu(\kappa_k)\bigl(1-\Re \theta(\kappa_k)\bigr).
\]
Each function \(\theta\mapsto 1-\Re\theta(\kappa_k)\) is continuous and negative definite, and finite nonnegative linear combinations preserve negative definiteness; hence \(\lambda_m^\psi\) is continuous and negative definite, and
\[
\sum_{i,j=1}^n
\lambda_m^\psi(\theta_i^{-1}\theta_j)c_i\overline{c_j}
\le 0.
\]
Since \(\lambda_m^\psi(\theta)\to \lambda^\psi(\theta)\) for every \(\theta\), and the defining quadratic form is preserved under pointwise limits, taking the limit gives
\[
\sum_{i,j=1}^n
\lambda^\psi(\theta_i^{-1}\theta_j)c_i\overline{c_j}
\le 0,
\]
so \(\lambda^\psi\) is negative definite.

By Schoenberg's theorem, \(e^{-t\lambda^\psi}\) is positive definite for each \(t\ge 0\). Since it is continuous and equals \(1\) at the trivial character, and \(K(X,A)\) is locally compact abelian under the standing assumption, Bochner's theorem gives a unique probability measure $p_t^\psi$ satisfying \eqref{eq:fourier}. The identity $e^{-(s+t)\lambda^\psi}=e^{-s\lambda^\psi}e^{-t\lambda^\psi}$ implies \(p_{s+t}^\psi=p_s^\psi*p_t^\psi\), so \((p_t^\psi)_{t\ge0}\) is a convolution semigroup.

For \(t\ge0\), define \(P_t f = p_t^\psi * f\). Then \(P_t\) is a symmetric, translation-invariant, positivity-preserving contraction on \(\ell^p(K(X,A))\) for \(1\le p\le\infty\). It is strongly continuous on \(\ell^2(K(X,A))\): by Plancherel,
\[
\|P_t f-f\|_{\ell^2(K(X,A))}^2
=
\int_{\widehat{K(X,A)}}
|e^{-t\lambda^\psi(\theta)}-1|^2\,|\widehat f(\theta)|^2\,d\mu_{\widehat{K(X,A)}}(\theta),
\]
and dominated convergence applies since \(\lambda^\psi\) is bounded.

Lemma~\ref{lem:projection} implies that for every finite \(F\subset X\setminus A\),
\begin{align*}
\widehat{(\pi_F)_*p_t^\psi}(\chi)
&=
\widehat{p_t^\psi}(\chi\circ \pi_F) \\
&=
e^{-t\lambda^\psi(\chi\circ \pi_F)} \\
&=
e^{-t\lambda_F^{D,\psi}(\chi)},
\end{align*}
so the convolution semigroup on \(K(X_F,A)\) generated by \(\lambda_F^{D,\psi}\) is the pushforward of \((p_t^\psi)_{t\ge0}\) under \(\pi_F\).

Define an operator \(L^\psi\) on finitely supported \(f:K(X,A)\to\mathbb C\) by
\[
(L^\psi f)(\gamma)
=
\sum_{\kappa\in K(X,A)\setminus\{0\}}
\nu(\kappa)\bigl(f(\gamma)-f(\gamma+\kappa)\bigr).
\]
Since \(\nu\in \ell^1(K(X,A)\setminus\{0\})\), the sum converges absolutely and
\[
\|L^\psi f\|_{\ell^1}
\le
2\Bigl(\sum_{\kappa\neq 0}\nu(\kappa)\Bigr)\|f\|_{\ell^1},
\]
so \(L^\psi\) extends uniquely to a bounded operator on \(\ell^1(K(X,A))\).

For finitely supported \(f\), the sum over \(\kappa\) is absolutely summable and the sum over \(\gamma\) is finite, so Fubini's theorem justifies interchange of the sum and Fourier transform, giving
\[
\widehat{L^\psi f}(\theta)
=
\sum_{\kappa\neq 0}\nu(\kappa)\bigl(1-\theta(\kappa)\bigr)\widehat f(\theta).
\]
Using the symmetry \(\nu(\kappa)=\nu(-\kappa)\), we write
\begin{align*}
\sum_{\kappa\neq 0}\nu(\kappa)\bigl(1-\theta(\kappa)\bigr)
&=
\frac{1}{2}
\sum_{\kappa\neq 0}
\Bigl[
\nu(\kappa)\bigl(1-\theta(\kappa)\bigr)
+
\nu(-\kappa)\bigl(1-\theta(-\kappa)\bigr)
\Bigr] \\
&=
\frac{1}{2}
\sum_{\kappa\neq 0}
\nu(\kappa)\Bigl[(1-\theta(\kappa))+(1-\overline{\theta(\kappa)})\Bigr] \\
&=
\sum_{\kappa\neq 0}\nu(\kappa)\bigl(1-\Re\theta(\kappa)\bigr)
=
\lambda^\psi(\theta).
\end{align*}
Thus $\widehat{L^\psi f}(\theta)=\lambda^\psi(\theta)\widehat f(\theta)$.

Since \(L^\psi\) is bounded on \(\ell^1(K(X,A))\), the operator exponential
\[
e^{-tL^\psi}
=
\sum_{n=0}^\infty \frac{(-t)^n}{n!}(L^\psi)^n
\]
defines a uniformly continuous semigroup on $\ell^1(K(X,A))$. For finitely supported $f$, $\widehat{e^{-tL^\psi}f}(\theta)=e^{-t\lambda^\psi(\theta)}\widehat f(\theta)$.
On the other hand,
\begin{align*}
\widehat{P_t f}(\theta)
&=
\widehat{p_t^\psi}(\theta)\widehat f(\theta) \\
&=
e^{-t\lambda^\psi(\theta)}\widehat f(\theta).
\end{align*}
Hence $\widehat{P_t f}=\widehat{e^{-tL^\psi}f}$, and by injectivity of the Fourier transform on $\ell^1(K(X,A))$ for discrete abelian groups, we conclude that $P_t f=e^{-tL^\psi}f$. for all finitely supported \(f\). By density of finitely supported functions in \(\ell^1(K(X,A))\) and continuity of both semigroups, this identity extends to all \(f\in \ell^1(K(X,A))\).

It follows that \((P_t)_{t\ge0}\) is uniformly continuous on \(\ell^1(K(X,A))\), and in particular
\[
\|p_t^\psi-p_s^\psi\|_{\ell^1}
=
\|P_t\delta_0-P_s\delta_0\|_{\ell^1}
\to 0
\quad\text{as }t\to s.
\]

Finally, substituting the definition of \(\nu\) gives
$$(L^\psi f)(\gamma)
=
\frac{1}{2} \sum_{\substack{x,y\in X\setminus A\\ x\neq y}} \psi\bigl(\overline d_1(x,y)\bigr) \bigl(f(\gamma)-f(\gamma+e_x-e_y)\bigr),$$
which completes the proof.
\end{proof}

The global symbol pulls back to the finite-rank symbol along \(\pi_F\), and the finite-rank semigroup is the pushforward of the global semigroup.

\begin{lemma}\label{lem:effective-support}
Let \((p_t^\psi)_{t\ge0}\) be the convolution semigroup on \(K(X,A)\) constructed above. Then the subgroup \(\big\langle \bigcup_{q\in\mathbb Q_{\ge0}} \operatorname{supp}(p_q^\psi) \big\rangle\) is countable, and
\[
\operatorname{supp}(p_t^\psi)\subset \bigcup_{q\in\mathbb Q_{\ge0}} \operatorname{supp}(p_q^\psi)
\]
for all \(t\ge0\).
\end{lemma}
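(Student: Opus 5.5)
The plan is to use the explicit form of the semigroup from the proof of Theorem~\ref{thm:global}: on $\ell^1(K(X,A))$ we have $P_t=e^{-tL^\psi}$, and $p_t^\psi=P_t\delta_0$. Set $c=\sum_{\kappa\neq0}\nu(\kappa)<\infty$ and write $L^\psi=c\,I-N$, where $Nf(\gamma)=\sum_{\kappa}\nu(\kappa)f(\gamma+\kappa)$ is convolution with the symmetric finite measure $\nu$ (symmetry makes the sign convention irrelevant). Since $I$ commutes with $N$, we get
\[
p_t^\psi=e^{-ct}\sum_{n=0}^\infty \frac{t^n}{n!}\,\nu^{*n},
\]
with $\nu^{*0}=\delta_0$. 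This is a norm-convergent series in $\ell^1$.

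\textbf{Countability.} Every $\nu^{*n}$ is supported in the $n$-fold sumset of $\operatorname{supp}(\nu)$, which is countable by Lemma~\ref{lem:countable-support}. Hence $\operatorname{supp}(p_t^\psi)$ lies in the subgroup $\langle\operatorname{supp}(\nu)\rangle$, which is countable for every $t$. The union over $q\in\mathbb Q_{\ge0}$ is then a countable union of sets contained in this one countable subgroup. The subgroup generated by that union lies inside $\langle\operatorname{supp}(\nu)\rangle$, so it is countable too.

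\textbf{Support inclusion.} For $t=0$, $p_0^\psi=\delta_0$, and this coincides with the $q=0$ term, so there is nothing to show. For $t>0$, all terms in the series are nonnegative and all coefficients $e^{-ct}t^n/n!$ are strictly positive. Therefore
\[
\operatorname{supp}(p_t^\psi)=\bigcup_{n\ge0}\operatorname{supp}(\nu^{*n}),
\]
and this set does not depend on $t>0$. Taking any rational $q>0$, we get $\operatorname{supp}(p_t^\psi)=\operatorname{supp}(p_q^\psi)$, which gives the claimed inclusion. I expect this to be the only delicate step. One has to justify that no cancellation occurs: pointwise evaluation of an $\ell^1$-convergent series of nonnegative functions with positive coefficients is a convergent series of nonnegative reals, and such a sum vanishes only if every term vanishes.

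The alternative I would avoid is a continuity argument. Using $\|p_t^\psi-p_s^\psi\|_{\ell^1}\to0$ together with rational approximation only shows that a point with $p_t^\psi(\gamma)>0$ has $p_q^\psi(\gamma)>0$ for rationals $q$ near $t$. That is also enough for the inclusion, and it would serve as a fallback if the series decomposition needed extra care.
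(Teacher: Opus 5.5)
Your proof is correct, but it takes a different route from the paper.

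\textbf{The paper's route.} The paper argues softly. Countability follows because each $p_q^\psi$ is a probability mass function on a discrete group and so has countable support. The inclusion follows from the $\ell^1$-continuity of $t\mapsto p_t^\psi$ obtained in the proof of Theorem~\ref{thm:global}. This makes $t\mapsto p_t^\psi(\gamma)$ continuous, so positivity at $t$ persists at a nearby rational $q$. That is exactly the argument you list as your fallback.

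\textbf{Your route.} You use the bounded generator $L^\psi=cI-N$ from that same proof to write $p_t^\psi=e^{-ct}\sum_{n\ge0}\frac{t^n}{n!}\nu^{*n}$. Your no-cancellation justification is sound: evaluation at $\gamma$ is continuous on $\ell^1$, and a convergent series of nonnegative reals vanishes only if every term does.

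\textbf{What your route buys.} It gives a strictly stronger conclusion. Since $\nu\ge0$, $\operatorname{supp}(\nu^{*n})$ is exactly the $n$-fold sumset $nS$ of $S=\operatorname{supp}(\nu)$. Since $S=-S$, the union $\bigcup_{n\ge0}nS$ is already a subgroup. Hence $\operatorname{supp}(p_t^\psi)=\langle\operatorname{supp}(\nu)\rangle$ for every $t>0$. It follows that $H=\langle\operatorname{supp}(\nu)\rangle$, so the union over $\mathbb Q_{\ge0}$ is superfluous and any single $q>0$ suffices. Your route also anticipates the compound Poisson description of Corollary~\ref{cor:finite-activity-compound-poisson}.

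\textbf{What it costs.} You depend on the identification $P_t=e^{-tL^\psi}$. Equivalently, you could check directly that your series is a probability measure with Fourier transform $e^{-t\lambda^\psi}$ and invoke uniqueness. The paper's argument, by contrast, needs only continuity in $t$ and nonnegativity, and never examines the structure of the generator.
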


\begin{proof}
Each \(p_q^\psi\) is a probability mass function on the discrete group \(K(X,A)\), so \(\operatorname{supp}(p_q^\psi)\) is countable. Since \(\mathbb Q_{\ge0}\) is countable, the union \(\bigcup_{q\in\mathbb Q_{\ge0}} \operatorname{supp}(p_q^\psi)\) is countable, and hence so is the subgroup it generates.

Fix \(\gamma\in K(X,A)\) and define \(f_\gamma(t)=p_t^\psi(\gamma)\). The map \(t\mapsto p_t^\psi\) is continuous as a function into \(\ell^1\bigl(K(X,A)\bigr)\), and evaluation at \(\gamma\) is a bounded linear functional, so \(f_\gamma\) is continuous.

If \(p_t^\psi(\gamma)>0\) for some \(t\ge0\), then by continuity there exists \(\varepsilon>0\) such that \(p_s^\psi(\gamma)>0\) for all \(s\in(t-\varepsilon,t+\varepsilon)\cap[0,\infty)\). Since \(\mathbb Q_{\ge0}\) is dense in \([0,\infty)\), there exists \(q\in\mathbb Q_{\ge0}\) in this interval, and hence \(\gamma\in \operatorname{supp}(p_q^\psi)\). Therefore \(\gamma\) lies in \(\bigcup_{q\in\mathbb Q_{\ge0}} \operatorname{supp}(p_q^\psi)\).
\end{proof}

Set
\[
H = \Big\langle \bigcup_{q\in\mathbb Q_{\ge0}} \operatorname{supp}(p_q^\psi) \Big\rangle \subset K(X,A).
\]
By Lemma~\ref{lem:effective-support}, the group \(H\) is countable and \(\operatorname{supp}(p_t^\psi)\subset H\) for every \(t\ge0\). Thus \((p_t^\psi)_{t\ge0}\) may be regarded as a symmetric convolution semigroup of probability measures on \(H\), and from this point onward all analysis is carried out on \(H\).

For $\theta\in\widehat H$, define
\begin{equation*}
\widehat p_t^\psi(\theta)=\sum_{h\in H} p_t^\psi(h)\,\overline{\theta(h)}
\end{equation*}
for $t\ge0$. Since $p_t^\psi$ is supported on $H$, and every character on $H$ extends to a character on $K(X,A)$ (because $\mathbb T$ is divisible), any extension $\tilde\theta\in\widehat{K(X,A)}$ of $\theta$ satisfies $\widehat p_t^\psi(\theta)=\widehat{p_t^\psi}(\tilde\theta)=e^{-t\lambda^\psi(\tilde\theta)}$. Hence there exists a function $\lambda_H:\widehat H\to[0,\infty)$ such that $\widehat p_t^\psi(\theta)=e^{-t\lambda_H(\theta)}$ for $\theta\in\widehat H$ and $t\ge0$.

\begin{theorem}
\label{thm:lk-H}
There exists, for all $\theta\in\widehat H$, a unique symmetric $\ell^1(H\setminus\{0\})\ni \nu\colon H\setminus\{0\}\to[0,\infty)$ such that
\begin{equation}
\lambda_H(\theta)
=
\sum_{\kappa\in H\setminus\{0\}}
\bigl(1-\Re\,\theta(\kappa)\bigr)\,\nu(\kappa)
\label{eq:lk-series}
\end{equation}
\end{theorem}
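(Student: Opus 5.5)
Existence comes from restricting the jump kernel $\nu$ of \eqref{eq:nu} to $H$; uniqueness comes from Fourier inversion on the countable discrete group $H$.

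\textbf{Step 1: $\operatorname{supp}(\nu)\subset H$.} Since $L^\psi$ is bounded on $\ell^1(K(X,A))$, Theorem~\ref{thm:global} gives $p_t^\psi=e^{-tL^\psi}\delta_0=\delta_0-tL^\psi\delta_0+O(t^2)$ in $\ell^1$. Evaluating at $\kappa\neq0$ gives $(L^\psi\delta_0)(\kappa)=-\nu(-\kappa)=-\nu(\kappa)$, so
\[
p_t^\psi(\kappa)=t\,\nu(\kappa)+O(t^2).
\]
Hence $\nu(\kappa)>0$ forces $p_q^\psi(\kappa)>0$ for small rational $q>0$, and therefore $\kappa\in H$.

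\textbf{Step 2: existence.} Let $\nu_H$ be the restriction of $\nu$ to $H\setminus\{0\}$. It is nonnegative, symmetric (as $H=-H$), and in $\ell^1$. Fix $\theta\in\widehat H$ and an extension $\tilde\theta\in\widehat{K(X,A)}$, which exists by divisibility of $\mathbb T$. By the construction of $\lambda_H$ and Step~1,
\[
\lambda_H(\theta)=\lambda^\psi(\tilde\theta)=\sum_{\kappa\in\operatorname{supp}\nu}\nu(\kappa)\bigl(1-\Re\tilde\theta(\kappa)\bigr)=\sum_{\kappa\in H\setminus\{0\}}\nu_H(\kappa)\bigl(1-\Re\theta(\kappa)\bigr).
\]
The first equality is well defined because $e^{-t\lambda_H(\theta)}=\widehat p^\psi_t(\theta)$ for all $t$, and $\lambda_H$ is real and bounded.

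\textbf{Step 3: uniqueness, the main point to handle carefully.} Suppose $\nu'$ is another symmetric $\ell^1$ function giving the same $\lambda_H$. Set $\mu=\nu_H-\nu'$, which lies in $\ell^1(H\setminus\{0\})$, is real and symmetric, and satisfies
\[
\sum_\kappa \mu(\kappa)\bigl(1-\Re\theta(\kappa)\bigr)=0\quad\text{for all }\theta.
\]
Because $\mu$ is symmetric, $\sum_\kappa\mu(\kappa)\Re\theta(\kappa)=\sum_\kappa\mu(\kappa)\theta(\kappa)=\widehat\mu(\bar\theta)$. The relation therefore says $\widehat\mu(\theta)=\sum_\kappa\mu(\kappa)=:c$ is constant on $\widehat H$.

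Extend $\mu$ by $\mu(0)=-c$. Its Fourier transform then vanishes identically on the compact dual $\widehat H$. By injectivity of the Fourier transform on $\ell^1(H)$ (Plancherel, since $\ell^1\subset\ell^2$ on a discrete group), $\mu\equiv0$, which in particular gives $c=0$. Hence $\nu'=\nu_H$.

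The only genuinely nontrivial input is this reduction to a Fourier transform vanishing identically; the constant term at $0$ is what absorbs the "$1$" in $1-\Re\theta$. Step~1's first-order expansion in $t$ is routine given the $\ell^1$ boundedness established in Theorem~\ref{thm:global}.
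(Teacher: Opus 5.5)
Your proof is correct, but it reaches existence by a different route from the paper. The paper never returns to the pair-jump kernel. It restricts convolution by $p_t^\psi$ to $\ell^1(H)$ and uses the $\ell^1$-norm continuity of $t\mapsto p_t^\psi$ to get a bounded translation-invariant generator $B$ with $Bf=(B\delta_0)*f$. It then defines $\nu(\kappa)=(B\delta_0)(-\kappa)=\lim_{t\downarrow0}p_t^\psi(-\kappa)/t$. Nonnegativity comes from $p_t^\psi\ge0$, and symmetry is deduced from the realness of $\widehat p_t^\psi$ together with Fourier injectivity.

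You instead identify the L\'evy measure explicitly as the restriction of the kernel \eqref{eq:nu}. Nonnegativity, symmetry and summability are then inherited from the construction. The only thing left to check is $\operatorname{supp}(\nu)\subset H$, and your first-order expansion $p_t^\psi(\kappa)=t\,\nu(\kappa)+O(t^2)$ handles this cleanly; the remainder is uniform in $\kappa$ because it is controlled in $\ell^1$.

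Both arguments rest on the same input, the $\ell^1$-boundedness of $L^\psi$ from Theorem~\ref{thm:global}. In fact the paper's $B\delta_0$ equals $-L^\psi\delta_0$, so its $\nu$ is exactly your $\nu_H$. The inclusion of the support in $H$ is implicit there, because $B\delta_0$ is an $\ell^1$-limit of functions supported in $H$.

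What each route buys:
\begin{itemize}
\item Yours gives a concrete description of the jump intensity on $H$: $\nu_H(e_x-e_y)=\tfrac12\psi(\overline d_1(x,y))$, and zero off pair-jumps. It also shows directly that $\lambda_H(\theta)=\lambda^\psi(\tilde\theta)$ for every extension $\tilde\theta$.
\item The paper's is more general. It only uses that $(p_t^\psi)$ is an $\ell^1$-continuous symmetric convolution semigroup of probability measures on a countable discrete abelian group.
\end{itemize}

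Your uniqueness step is essentially the paper's: adjoin the atom $-\sum_\kappa\sigma(\kappa)$ at $0$ so that the Fourier transform vanishes identically, then invoke injectivity.
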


\begin{proof}
For $t\ge0$, define $P_t$ on $\ell^1(H)$ by
\begin{equation*}
(P_t f)(\gamma)
=
\sum_{h\in H} p_t^\psi(h)\,f(\gamma-h)
\end{equation*}
for $f\in\ell^1(H)$ and $\gamma\in H$. Then for $f\in\ell^1(H)$,
\[
\|P_t f - P_s f\|_{\ell^1(H)}
\le
\|p_t^\psi - p_s^\psi\|_{\ell^1(H)} \|f\|_{\ell^1(H)},
\]
so $(P_t)_{t\ge 0}$ is a uniformly continuous convolution semigroup on $\ell^1(H)$.

Let $B$ denote its generator,
\begin{equation*}
Bf
=
\lim_{t\downarrow 0} \frac{P_t f - f}{t}
\end{equation*}
for $f\in\ell^1(H)$, which exists in $\ell^1(H)$ and defines a bounded linear operator.

Each $P_t$ is translation invariant, hence $B$ commutes with translations. Let $\delta_0$ denote the unit mass at $0$, and set $\eta = B\delta_0 \in \ell^1(H)$. If $f$ has finite support, then
\[
f = \sum_{\gamma\in H} f(\gamma)\,\tau_\gamma \delta_0,
\]
where $(\tau_\gamma g)(x)=g(x-\gamma)$. Using translation invariance of $B$, we obtain
\[
Bf = \sum_{\gamma\in H} f(\gamma)\,\tau_\gamma (B\delta_0) = \eta * f.
\]
By density of finitely supported functions in $\ell^1(H)$ and boundedness of $B$, this identity extends to all $f\in \ell^1(H)$. Thus $\eta\in\ell^1(H)$ is a finite signed measure on $H$.

Since $P_t$ preserves total mass, we have $\sum_{\kappa\in H} \eta(\kappa)=0$. For $\kappa\ne 0$,
\[
\eta(\kappa)
=
(B\delta_0)(\kappa)
=
\lim_{t\downarrow 0} \frac{p_t^\psi(\kappa)}{t}
\ge 0,
\]
since $p_t^\psi(\kappa)\ge0$. Define $\ell^1(H\setminus\{0\})\ni \nu:H\setminus\{0\}\to[0,\infty)$ by $\nu(\kappa)=\eta(-\kappa)$ for $\kappa\ne0$.
Then $\nu\in \ell^1(H\setminus\{0\})$, and for $\gamma\in H$,
\[
(Bf)(\gamma)
=
\sum_{\kappa\ne 0} \nu(\kappa)\bigl(f(\gamma+\kappa)-f(\gamma)\bigr).
\]
Let $f$ have finite support. Then
\begin{align*}
\widehat{Bf}(\theta)
&=
\sum_{\gamma\in H} (Bf)(\gamma)\,\overline{\theta(\gamma)} \\
&=
\sum_{\kappa\ne 0} \nu(\kappa)
\sum_{\gamma\in H} \bigl(f(\gamma+\kappa)-f(\gamma)\bigr)\overline{\theta(\gamma)}.
\end{align*}
A change of variables in the first term gives
\[
\sum_{\gamma\in H} f(\gamma+\kappa)\overline{\theta(\gamma)}
=
\theta(\kappa)\widehat f(\theta),
\]
and therefore
\[
\widehat{Bf}(\theta)
=
-\Bigl(\sum_{\kappa\ne 0} \nu(\kappa)\bigl(1-\theta(\kappa)\bigr)\Bigr)\widehat f(\theta).
\]
On the other hand, $\widehat{P_t f}(\theta)=e^{-t\lambda_H(\theta)}\widehat f(\theta)$, and since $(P_t)$ is uniformly continuous,
\[
\widehat{Bf}(\theta)
=
\lim_{t\downarrow 0} \frac{\widehat{P_t f}(\theta)-\widehat f(\theta)}{t}
=
-\lambda_H(\theta)\widehat f(\theta).
\]
Hence
\[
\lambda_H(\theta)
=
\sum_{\kappa\ne 0} \nu(\kappa)\bigl(1-\theta(\kappa)\bigr).
\]

Since $p_t^\psi$ is symmetric, we have $\widehat p_t^\psi(\theta)\in\mathbb R$ for all $\theta\in\widehat H$. Since $\widehat p_t^\psi(\theta)=e^{-t\lambda_H(\theta)}$, it follows that $\lambda_H(\theta)$ is real-valued for all $\theta$. Thus
\begin{align*}
\sum_{\kappa\ne 0} \nu(\kappa)\theta(\kappa)
&=
\overline{\sum_{\kappa\ne 0} \nu(\kappa)\theta(\kappa)} \\
&=
\sum_{\kappa\ne 0} \nu(\kappa)\overline{\theta(\kappa)} \\
&=
\sum_{\kappa\ne 0} \nu(\kappa)\theta(-\kappa).
\end{align*}
Reindexing the last sum gives
\[
\sum_{\kappa\ne 0} \nu(\kappa)\theta(\kappa)
=
\sum_{\kappa\ne 0} \nu(-\kappa)\theta(\kappa).
\]
By injectivity of the Fourier transform on $\ell^1(H)$, it follows that $\nu(\kappa)=\nu(-\kappa)$ for all $\kappa$. Therefore
\[
\sum_{\kappa\ne 0} \nu(\kappa)\theta(\kappa)
=
\sum_{\kappa\ne 0} \nu(\kappa)\Re\,\theta(\kappa),
\]
and substituting into the expression for $\lambda_H$ gives \eqref{eq:lk-series}.

We now prove uniqueness. Suppose $\nu_1$ and $\nu_2$ both satisfy \eqref{eq:lk-series}, and set $\sigma=\nu_1-\nu_2$. Then $\sigma$ is symmetric and for all $\theta\in \widehat H$,
\[
\sum_{\kappa\ne 0} (1-\Re\,\theta(\kappa))\sigma(\kappa)=0.
\]
Define a finite signed measure $\mu$ on $H$ by
\begin{equation*}
\mu(0)=-\sum_{\kappa\ne0}\sigma(\kappa), \qquad
\mu(\kappa)=\sigma(\kappa)\ \text{for }\kappa\ne0
\end{equation*}
Then
\[
\widehat\mu(\theta)
=
-\sum_{\kappa\ne 0}\sigma(\kappa)
+
\sum_{\kappa\ne 0}\sigma(\kappa)\overline{\theta(\kappa)}.
\]
Since \(\sigma\) is symmetric and the series converges absolutely, we may pair \(\kappa\) and \(-\kappa\) to obtain
\[
\sum_{\kappa\ne 0}\sigma(\kappa)\overline{\theta(\kappa)}
=
\sum_{\kappa\ne 0}\sigma(\kappa)\Re\,\theta(\kappa).
\]
Therefore
\[
\widehat\mu(\theta)
=
-\sum_{\kappa\ne 0}\sigma(\kappa)\bigl(1-\Re\,\theta(\kappa)\bigr)
=
0.
\]
By injectivity of the Fourier transform on finite measures on $H$, we conclude that $\mu=0$, hence $\sigma=0$. Therefore $\nu_1=\nu_2$.
\end{proof}

Fix \(R>0\). For \(\kappa\in H\setminus\{0\}\), we define the metric truncation of the L\'evy measure by \( \nu_R(\kappa) = \nu(\kappa)\,\mathbf 1_{\{\rho(\kappa,0)\le R\}} \). For \(\theta\in\widehat H\), the associated truncated L\'evy exponent is then given by
\[
 \lambda_{H,R}(\theta)
 =
 \sum_{\kappa\in H\setminus\{0\}}
 \bigl(1-\Re\,\theta(\kappa)\bigr)\,\nu_R(\kappa).
\]

\begin{lemma}\label{lem:lambdaHR-monotone}
For every \(\theta\in\widehat H\), we have
\[
\lambda_{H,R}(\theta)\uparrow \lambda_H(\theta)
\quad\text{as }R\to\infty.
\]
\end{lemma}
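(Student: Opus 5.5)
The plan is monotone convergence for series with nonnegative terms, applied to the Lévy--Khintchine representation of Theorem~\ref{thm:lk-H}. Fix $\theta\in\widehat H$ and set $a(\kappa)=\bigl(1-\Re\,\theta(\kappa)\bigr)\nu(\kappa)$ for $\kappa\in H\setminus\{0\}$. Since $\nu\ge0$ and $0\le 1-\Re\,\theta(\kappa)\le 2$, every term satisfies $0\le a(\kappa)\le 2\nu(\kappa)$. Because $\nu\in\ell^1(H\setminus\{0\})$, the family $(a(\kappa))$ is absolutely summable, and by \eqref{eq:lk-series} its sum is $\lambda_H(\theta)$. By definition, $\lambda_{H,R}(\theta)=\sum_{\kappa}a(\kappa)\mathbf 1_{\{\rho(\kappa,0)\le R\}}$.

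\emph{Monotonicity.} If $R\le R'$, then $\mathbf 1_{\{\rho(\kappa,0)\le R\}}\le \mathbf 1_{\{\rho(\kappa,0)\le R'\}}$ pointwise. Since the $a(\kappa)$ are nonnegative, this gives $\lambda_{H,R}(\theta)\le\lambda_{H,R'}(\theta)\le\lambda_H(\theta)$.

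\emph{Convergence.} Here I use that $\rho$ is a genuine metric on $K(X,A)$, hence finite on $H$. So for each fixed $\kappa$ we have $\mathbf 1_{\{\rho(\kappa,0)\le R\}}=1$ as soon as $R\ge\rho(\kappa,0)$, and the indicator increases pointwise to $1$ as $R\to\infty$. Since $H$ is countable (Lemma~\ref{lem:effective-support}), the monotone convergence theorem for counting measure on $H\setminus\{0\}$ applies. Equivalently, dominated convergence applies with dominating function $2\nu\in\ell^1$. Either way, $\lambda_{H,R}(\theta)\to\sum_\kappa a(\kappa)=\lambda_H(\theta)$.

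If an elementary argument is preferred, fix $\varepsilon>0$ and choose a finite $F\subset H\setminus\{0\}$ with $\sum_{\kappa\notin F}a(\kappa)<\varepsilon$. Then take $R\ge\max_{\kappa\in F}\rho(\kappa,0)$, which gives $\lambda_H(\theta)-\lambda_{H,R}(\theta)<\varepsilon$.

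The only point needing any care is that $\rho(\kappa,0)<\infty$ for every $\kappa\in H$, so that each jump is eventually included in the truncation; this holds because $\rho$ is a finite-valued metric. Everything else is routine nonnegativity and summability, supplied by the fact that $\nu\in\ell^1$ in Theorem~\ref{thm:lk-H}.
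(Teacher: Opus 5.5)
Your proof is correct and follows the paper's argument: the summands are nonnegative, $\nu\in\ell^1$, and monotone convergence applies as the sets $\{\kappa:\rho(\kappa,0)\le R\}$ increase to $H\setminus\{0\}$. Your remark that $\rho(\kappa,0)<\infty$ for each $\kappa$ (which is why these sets exhaust $H\setminus\{0\}$) and your $\varepsilon$-argument for the continuous parameter $R$ just make explicit what the paper leaves implicit.
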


\begin{proof}
Fix \(\theta\in\widehat H\). By definition,
\[
\lambda_H(\theta)
=
\sum_{\kappa\in H\setminus\{0\}}
\bigl(1-\Re\,\theta(\kappa)\bigr)\,\nu(\kappa),
\]
and
\[
\lambda_{H,R}(\theta)
=
\sum_{\substack{\kappa\in H\setminus\{0\}\\ \rho(\kappa,0)\le R}}
\bigl(1-\Re\,\theta(\kappa)\bigr)\,\nu(\kappa).
\]
Since all summands are nonnegative and \(\nu\in\ell^1(H\setminus\{0\})\), the series defining \(\lambda_H(\theta)\) converges absolutely. The sets \(\{\kappa:\rho(\kappa,0)\le R\}\) increase to \(H\setminus\{0\}\) as \(R\to\infty\), so the result follows by monotone convergence.
\end{proof}

For $q>0$ and a probability measure $\pi$ on $H\setminus\{0\}$, let $(\xi_j)_{j\ge1}$ be i.i.d.\ $H$-valued random variables with law $\pi$, and let $(N_t)_{t\ge0}$ be a Poisson process with rate $q$, independent of $(\xi_j)$. Define
\begin{equation*}
S_t=\sum_{j=1}^{N_t}\xi_j
\end{equation*}
for $t\ge0$. If $q=0$, set $S_t\equiv0$ for all $t\ge0$.

\begin{corollary}\label{cor:finite-activity-compound-poisson}
The following are equivalent, and condition (i) holds under assumption \eqref{eq:psi-summability}:
\begin{itemize}
\item[(i)] \(\displaystyle \sum_{\kappa\in H\setminus\{0\}} \nu(\kappa)<\infty\).
\item[(ii)] Either $p_t^\psi=\delta_0$ for every $t\ge0$, or there exist $q>0$ and a probability measure $\pi$ on $H\setminus\{0\}$ such that, for every $t\ge0$ and $\gamma\in H$, $p_t^\psi(\gamma)=\mathbb P\!\left(S_t=\gamma\right)$.
\end{itemize}
\end{corollary}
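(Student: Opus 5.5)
The plan is to prove the two implications separately, and then note that (i) holds automatically because $\nu\in\ell^1(H\setminus\{0\})$ by Theorem~\ref{thm:lk-H}. That theorem produces $\nu$ as the off-diagonal part of $B\delta_0$, where $B$ is the bounded generator on $\ell^1(H)$. Alternatively, under \eqref{eq:psi-summability} one can argue through the uniqueness part of Theorem~\ref{thm:lk-H}. The restriction of the pair-jump kernel \eqref{eq:nu} to $H$ is a symmetric $\ell^1$ function, and pulling back characters shows it represents $\lambda_H$; uniqueness then forces it to be $\nu$. Either way (i) holds.

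\emph{(i)$\Rightarrow$(ii).} Set $q=\sum_{\kappa\neq0}\nu(\kappa)<\infty$.
\begin{itemize}
\item If $q=0$, then $\lambda_H\equiv0$. Hence $\widehat p_t^\psi\equiv1$, and Fourier injectivity on $\ell^1(H)$ gives $p_t^\psi=\delta_0$.
\item If $q>0$, put $\pi=\nu/q$, a probability measure on $H\setminus\{0\}$. Conditioning on $N_t$ and using independence, the law of $S_t$ has Fourier transform
\[
\mathbb E\,\overline{\theta(S_t)}=\sum_{n\ge0}e^{-qt}\frac{(qt)^n}{n!}\,\widehat\pi(\theta)^n=e^{-qt(1-\widehat\pi(\theta))}.
\]
By symmetry of $\nu$, $\widehat\pi(\theta)=q^{-1}\sum_{\kappa}\nu(\kappa)\Re\theta(\kappa)$. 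Therefore $qt(1-\widehat\pi(\theta))=t\lambda_H(\theta)$ by \eqref{eq:lk-series}.
\end{itemize}
Both the law of $S_t$ and $p_t^\psi$ are probability measures on the countable discrete group $H$ with the same Fourier transform $e^{-t\lambda_H}$. Injectivity of the Fourier transform on $\ell^1(H)$ then gives $p_t^\psi(\gamma)=\mathbb P(S_t=\gamma)$.

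\emph{(ii)$\Rightarrow$(i).}
\begin{itemize}
\item If $p_t^\psi=\delta_0$ for all $t$, then $\lambda_H\equiv0$. Uniqueness in Theorem~\ref{thm:lk-H} applied to the zero function gives $\nu=0$, so the sum in (i) is $0$.
\item Otherwise, the computation above gives $\lambda_H(\theta)=q(1-\Re\widehat\pi(\theta))$. Here we use that $\widehat p_t^\psi$ is real, which holds by symmetry of $p_t^\psi$, to replace $\widehat\pi$ by its real part. This exhibits $\lambda_H$ in the form \eqref{eq:lk-series} with the nonnegative function $\kappa\mapsto\frac q2(\pi(\kappa)+\pi(-\kappa))$, which is symmetric and has total mass $q$.
\end{itemize}
Uniqueness in Theorem~\ref{thm:lk-H} identifies this function with $\nu$. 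Hence $\sum\nu=q<\infty$.

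The main obstacle is careful bookkeeping rather than depth. Three points need attention:
\begin{itemize}
\item the symmetrization step, since $\pi$ in (ii) need not be symmetric, so $\nu$ must be matched with the symmetrized $\pi$ rather than with $\pi$ itself;
\item taking logarithms of $e^{-t\lambda_H}$ legitimately, which is done by comparing exponents via differentiation at $t=0$ rather than by a complex logarithm;
\item justifying the interchange of the Poisson sum with the character sum, which follows from absolute convergence since $|\widehat\pi|\le1$.
\end{itemize}
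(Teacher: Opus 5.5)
Your proof is correct. The forward direction (i)$\Rightarrow$(ii) matches the paper's argument, and so does the degenerate case $p_t^\psi=\delta_0$ of the converse.

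The routes differ in the non-degenerate converse. The paper's proof runs in three steps:
\begin{enumerate}
\item It proves $\widehat\pi(\theta)\in\mathbb R$: writing $\widehat\pi(\theta)-1=a+ib$, a nonzero $b$ would make $e^{iqtb}$ non-real for some $t>0$.
\item It deduces $\pi=\check\pi$ from Fourier injectivity.
\item It applies uniqueness in Theorem~\ref{thm:lk-H} to conclude $\nu=q\pi$.
\end{enumerate}
You skip the symmetry of $\pi$ by passing to $\tfrac q2(\pi+\check\pi)$, which represents $\lambda_H$ because $1-\Re\theta(\kappa)$ is even in $\kappa$. This is shorter and is all that is needed for $\sum\nu=q<\infty$. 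The paper's route gives the sharper conclusion that the pair $(q,\pi)$ in (ii) must equal $(\sum\nu,\nu/q)$. Your own differentiation at $t=0$ already yields $\lambda_H=q(1-\widehat\pi)$, so $\widehat\pi$ is real and $\pi=\check\pi$ follows from injectivity. That recovers the paper's identification with less work than its $e^{iqtb}$ argument.

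You also explicitly justify the clause that (i) holds under \eqref{eq:psi-summability}, which the paper's proof leaves implicit. Your first justification, $\nu(\kappa)=(B\delta_0)(-\kappa)$ with $B\delta_0\in\ell^1(H)$, is complete.

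Your alternative via the pair-jump kernel $\nu$ of \eqref{eq:nu} needs one fact you did not state: that this kernel is supported in $H$. Without it, pulling back characters does not reduce the sum over $K(X,A)$ to a sum over $H$. It follows from $p_t^\psi=e^{-tL^\psi}\delta_0=e^{-tq_0}e^{tA}\delta_0$, where $q_0=\sum_\kappa\nu(\kappa)$ and $(Af)(\gamma)=\sum_\kappa\nu(\kappa)f(\gamma+\kappa)$. This gives $p_t^\psi(\kappa)\ge e^{-tq_0}\,t\,\nu(\kappa)$ for $\kappa\neq0$, so the support of $\nu$ lies in $\operatorname{supp}(p_q^\psi)\subset H$ for any rational $q>0$.
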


\begin{proof}
Assume (i), and set
\[
q=\sum_{\kappa\in H\setminus\{0\}}\nu(\kappa).
\]
If \(q=0\), then \(\nu\equiv 0\), hence \(\lambda_H\equiv 0\). Since \(\lambda_H\equiv 0\), we have \(\widehat p_t^\psi(\theta)=1\) for all \(\theta\), and by injectivity of the Fourier transform on \(H\), it follows that \(p_t^\psi=\delta_0\) for all \(t\ge0\). Thus (ii) holds.

Assume now that $q>0$, and define $\pi(\kappa)=\frac{\nu(\kappa)}{q}$ for $\kappa\in H\setminus\{0\}$. Let $\mu_t$ denote the law of $S_t$. For $\theta\in\widehat H$, conditioning on $N_t$ gives
\[
\widehat\mu_t(\theta)
=
\exp\!\left(
qt\left(
\sum_{\kappa\in H\setminus\{0\}}
\pi(\kappa)\,\overline{\theta(\kappa)}-1
\right)
\right).
\]
Using \(\nu(\kappa)=q\,\pi(\kappa)\), we obtain
\[
\widehat\mu_t(\theta)
=
\exp\!\left(
-t\sum_{\kappa\in H\setminus\{0\}}
\nu(\kappa)\bigl(1-\overline{\theta(\kappa)}\bigr)
\right).
\]
Since \(\nu(\kappa)=\nu(-\kappa)\) and \(\theta(-\kappa)=\overline{\theta(\kappa)}\), we have
\[
\sum_{\kappa\in H\setminus\{0\}}
\nu(\kappa)\,\overline{\theta(\kappa)}
=
\sum_{\kappa\in H\setminus\{0\}}
\nu(\kappa)\,\theta(\kappa),
\]
and hence
\[
\sum_{\kappa\in H\setminus\{0\}}
\nu(\kappa)\,\overline{\theta(\kappa)}
=
\sum_{\kappa\in H\setminus\{0\}}
\nu(\kappa)\,\Re\theta(\kappa).
\]
Therefore
\begin{align*}
\widehat\mu_t(\theta)
&=
\exp\!\left(
-t\sum_{\kappa\in H\setminus\{0\}}
\nu(\kappa)\bigl(1-\Re\theta(\kappa)\bigr)
\right) \\
&=
\widehat p_t^\psi(\theta).
\end{align*}
By injectivity of the Fourier transform on finite measures on \(H\), we conclude that \(\mu_t=p_t^\psi\). Thus (ii) holds.

Conversely, assume (ii). If \(p_t=\delta_0\) for every \(t\ge0\), then \(\lambda_H\equiv 0\). Since \(\lambda_H\equiv 0\), we have \(\widehat p_t^\psi(\theta)=1\) for all \(\theta\), and by injectivity of the Fourier transform on finite measures on \(H\), it follows that \(\nu\equiv 0\), and hence (i) holds.

Assume now that there exist \(q>0\) and a probability measure \(\pi\) on \(H\setminus\{0\}\) such that
\[
p_t^\psi(\gamma)=\mathbb P(S_t=\gamma)
\]
for every \(t\ge0\) and \(\gamma\in H\). Let \(\mu_t\) denote the law of \(S_t\). For \(\theta\in\widehat H\), we have
\[
\widehat p_t^\psi(\theta)
=
\exp\!\left(
qt\left(
\sum_{\kappa\in H\setminus\{0\}}
\pi(\kappa)\,\overline{\theta(\kappa)}-1
\right)
\right).
\]
Define
\[
\widehat\pi(\theta)
=
\sum_{\kappa\in H\setminus\{0\}}
\pi(\kappa)\,\overline{\theta(\kappa)}.
\]
Since $\widehat p_t^\psi(\theta)=e^{-t\lambda_H(\theta)}\in\mathbb R_{>0}$ for all $t>0$, we have $e^{qt(\widehat\pi(\theta)-1)}\in\mathbb R_{>0}$ for all $t>0$. Write $\widehat\pi(\theta)-1=a+ib$. Then $e^{qt(\widehat\pi(\theta)-1)}=e^{qta}e^{iqtb}$. If \(b\neq 0\), then there exists \(t>0\) such that \(qtb\notin \pi\mathbb Z\), and hence \(e^{iqtb}\notin\mathbb R\), a contradiction. Thus \(b=0\), and therefore \(\widehat\pi(\theta)\in\mathbb R\) for all \(\theta\in\widehat H\).

Extend \(\pi\) to a finite measure on \(H\) by setting \(\pi(0)=0\), and define \(\check\pi(\kappa)=\pi(-\kappa)\). Then
\begin{align*}
\widehat{\check\pi}(\theta)
&=
\sum_{\kappa\in H}\check\pi(\kappa)\,\overline{\theta(\kappa)}
=
\sum_{\kappa\in H}\pi(-\kappa)\,\overline{\theta(\kappa)} \\
&=
\sum_{\eta\in H}\pi(\eta)\,\overline{\theta(-\eta)}
=
\sum_{\eta\in H}\pi(\eta)\,\theta(\eta)
=
\overline{\widehat\pi(\theta)}
=
\widehat\pi(\theta).
\end{align*}
By injectivity of the Fourier transform on finite measures on $H$, it follows that $\pi=\check\pi$, that is, $\pi(\kappa)=\pi(-\kappa)$. Hence
\[
\sum_{\kappa\in H\setminus\{0\}}
\pi(\kappa)\,\overline{\theta(\kappa)}
=
\sum_{\kappa\in H\setminus\{0\}}
\pi(\kappa)\,\Re\theta(\kappa),
\]
and therefore
\[
\widehat p_t^\psi(\theta)
=
\exp\!\left(
-t\sum_{\kappa\in H\setminus\{0\}}
\nu(\kappa)\bigl(1-\Re\theta(\kappa)\bigr)
\right).
\]
Comparing this representation of $\lambda_H$ with \eqref{eq:lk-series} and using uniqueness in Theorem~\ref{thm:lk-H}, we obtain $\nu(\kappa)=q\,\pi(\kappa)$ for $\kappa\in H\setminus\{0\}$, and hence
\[
\sum_{\kappa\in H\setminus\{0\}}\nu(\kappa)=q<\infty.
\]
This proves (i).
\end{proof}

Fix $t>0$ and define $k_t:H\times H\to\mathbb R$ by $k_t(x,y)=p_t^\psi(x-y)$. Then \(k_t\) is real and positive definite, and hence determines a reproducing kernel Hilbert space \(\mathcal H_t\) of functions on \(H\).

For each finite subset \(F\subset X\setminus A\), set \(H_F=H\cap K(X_F,A)\), and let \(k_t^F\) denote the restriction of \(k_t\) to \(H_F\times H_F\). The kernel \(k_t^F\) is positive definite and induces a reproducing kernel Hilbert space \(\mathcal H_t^F\) of functions on \(H_F\). For each \(h\in H_F\), the kernel section \(k_t^F(\cdot,h)\) agrees with \(k_t(\cdot,h)\), and this identification extends linearly to an isometric embedding of \(\mathcal H_t^F\) into \(\mathcal H_t\). We identify \(\mathcal H_t^F\) with its image in \(\mathcal H_t\).

For every $h\in H$ there exists a finite $F\subset X\setminus A$ such that $h\in H_F$, hence $k_t(\cdot,h)\in \mathcal H_t^F$. Therefore
$
\overline{\bigcup_{F\subset X\setminus A,\ |F|<\infty} \mathcal H_t^F}
=
\mathcal H_t,
$
and $\mathcal H_t$ is separable.

Fix $t>0$, and let $\mu_{\widehat H}$ denote the normalized Haar measure on $\widehat H$. Set $\mathcal W_t=L^2(\widehat H,e^{t\lambda_H}d\mu_{\widehat H})$. Define a linear map $J_t:\mathcal W_t\to \mathbb C^H$ by
\begin{equation*}
(J_t g)(\gamma)=\int_{\widehat H} g(\theta)\,\overline{\theta(\gamma)}\,d\mu_{\widehat H}(\theta)
\end{equation*}
for $\gamma\in H$. Since $\lambda_H$ is bounded and $\mu_{\widehat H}(\widehat H)<\infty$, every $g\in\mathcal W_t$ belongs to $L^1(\widehat H,\mu_{\widehat H})$ by Cauchy--Schwarz. Hence $J_t g$ is well defined.

\begin{lemma}\label{lem:spectral-Ht}
The map \(J_t\) is an isometric isomorphism from \(\mathcal W_t\) onto \(\mathcal H_t\). In particular, a function \(f:H\to\mathbb C\) belongs to \(\mathcal H_t\) if and only if there exists a unique \(g\in \mathcal W_t\) such that
\begin{equation*}
f(\gamma)=\int_{\widehat H} g(\theta)\,\overline{\theta(\gamma)}\,d\mu_{\widehat H}(\theta)
\end{equation*}
for all $\gamma\in H$, and in that case
\[
\|f\|_{\mathcal H_t}^2
=
\int_{\widehat H}
|g(\theta)|^2\,e^{t\lambda_H(\theta)}\,d\mu_{\widehat H}(\theta).
\]
\end{lemma}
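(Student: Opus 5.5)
We identify $\mathcal H_t$ with the weighted $L^2$ space $\mathcal W_t$ using the spectral representation of the kernel $k_t$, and then apply the standard characterization of a reproducing kernel Hilbert space as the image of a feature map.

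\emph{Step 1: Fourier inversion for the kernel.} Since $\widehat p_t^\psi(\theta)=e^{-t\lambda_H(\theta)}$ is continuous and $p_t^\psi\in\ell^1(H)\subset\ell^2(H)$, Plancherel/Fourier inversion on the compact dual $\widehat H$ gives
\[
p_t^\psi(\gamma)=\int_{\widehat H} e^{-t\lambda_H(\theta)}\,\theta(\gamma)\,d\mu_{\widehat H}(\theta),
\]
with the sign convention matching $\widehat p_t^\psi(\theta)=\sum_h p_t^\psi(h)\overline{\theta(h)}$. Hence
\[
k_t(x,y)=p_t^\psi(x-y)=\int_{\widehat H} e^{-t\lambda_H(\theta)}\,\theta(x)\overline{\theta(y)}\,d\mu_{\widehat H}(\theta).
\]

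\emph{Step 2: kernel sections lie in the image of $J_t$.} For $y\in H$ set $g_y(\theta)=e^{-t\lambda_H(\theta)}\theta(y)$. Then $g_y\in\mathcal W_t$, and
\[
(J_t g_y)(x)=\int e^{-t\lambda_H}\,\theta(y)\overline{\theta(x)}\,d\mu_{\widehat H}=k_t(y,x)=k_t(x,y),
\]
using symmetry of $p_t^\psi$. Moreover,
\[
\langle g_x,g_y\rangle_{\mathcal W_t}=\int e^{-2t\lambda_H}\,e^{t\lambda_H}\,\theta(x)\overline{\theta(y)}\,d\mu_{\widehat H}=k_t(x,y),
\]
possibly up to complex conjugation; conjugation is harmless because $k_t$ is real. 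So $J_t$ sends the $g_y$ to kernel sections and preserves their inner products.

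\emph{Step 3: isometry onto $\mathcal H_t$.} The reproducing formula in $\mathcal W_t$ reads
\[
\langle g,g_\gamma\rangle_{\mathcal W_t}=\int g\,e^{t\lambda_H}\,e^{-t\lambda_H}\,\overline{\theta(\gamma)}\,d\mu_{\widehat H}=(J_tg)(\gamma).
\]
This identifies $J_t$ with the feature-map construction $g\mapsto\langle g,\Phi(\cdot)\rangle$, where $\Phi(\gamma)=g_\gamma$. By the standard theorem (Aronszajn), the image of such a map with the quotient norm is exactly the RKHS of $\langle\Phi(x),\Phi(y)\rangle=k_t(x,y)$, i.e.\ $\mathcal H_t$. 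The map is isometric on the closed span of $\{g_\gamma\}$ and vanishes on its orthogonal complement.

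\emph{Step 4: injectivity (the main obstacle).} It remains to show that $\{g_\gamma\}$ has dense span in $\mathcal W_t$. If $g\perp g_\gamma$ for all $\gamma$, then $\int g\,\overline{\theta(\gamma)}\,d\mu_{\widehat H}=0$ for every $\gamma\in H$. Since $g\in L^1(\widehat H)$ (indeed in $L^2$, as $e^{t\lambda_H}\ge1$), and the characters $\theta\mapsto\theta(\gamma)$, $\gamma\in H$, form an orthonormal basis of $L^2(\widehat H)$ by Pontryagin duality and Peter--Weyl for the compact group $\widehat H$, we conclude $g=0$ a.e. The norms on $\mathcal W_t$ and $L^2(\widehat H)$ are equivalent because $\lambda_H$ is bounded, so this completes injectivity.

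Thus $J_t$ is a bijective isometry onto $\mathcal H_t$. Uniqueness of $g$ follows from injectivity, and the norm formula is exactly $\|g\|_{\mathcal W_t}^2$. The only delicate points are bookkeeping of conjugations in Steps 1–2 and invoking completeness of characters in Step 4.
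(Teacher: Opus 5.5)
Your proof is correct and is essentially the paper's argument. Both write $(J_t g)(\gamma)=\langle g,\, e^{-t\lambda_H}\theta(\gamma)\rangle_{\mathcal W_t}$, use Fourier inversion to see that $J_t$ sends these elements to the kernel sections of $k_t$, obtain injectivity from the completeness of the characters $\theta\mapsto\theta(\gamma)$ in $L^2(\widehat H,\mu_{\widehat H})$, and conclude by uniqueness of the RKHS with kernel $k_t$. Your feature-map form of Aronszajn's theorem is only a repackaging of the paper's step of transporting the inner product to $J_t(\mathcal W_t)$ and checking the reproducing property directly.
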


\begin{proof}
Since $\lambda_H\ge0$ is bounded, we have $1\le e^{t\lambda_H(\theta)}\le e^{t\|\lambda_H\|_\infty}$ for all $\theta\in\widehat H$. Thus $\mathcal W_t$ coincides with $L^2(\widehat H,\mu_{\widehat H})$ as a set, and the two norms are equivalent.

For each $\gamma\in H$, define $K_{t,\gamma}(\theta)=e^{-t\lambda_H(\theta)}\,\theta(\gamma)$. Then \(K_{t,\gamma}\in \mathcal W_t\), since \(|K_{t,\gamma}(\theta)|^2=e^{-2t\lambda_H(\theta)}\) and hence \(|K_{t,\gamma}(\theta)|^2 e^{t\lambda_H(\theta)}=e^{-t\lambda_H(\theta)}\) is bounded. For \(g\in\mathcal W_t\),
\begin{align*}
(J_t g)(\gamma)
&=
\int_{\widehat H} g(\theta)\,\overline{\theta(\gamma)}\,d\mu_{\widehat H}(\theta) \\
&=
\int_{\widehat H}
g(\theta)\,\overline{K_{t,\gamma}(\theta)}\,e^{t\lambda_H(\theta)}\,d\mu_{\widehat H}(\theta) \\
&=
\langle g,K_{t,\gamma}\rangle_{\mathcal W_t}.
\end{align*}

If \(J_t g=0\), then for all \(\gamma\in H\),
\[
\int_{\widehat H} g(\theta)\,\overline{\theta(\gamma)}\,d\mu_{\widehat H}(\theta)=0.
\]
Since the trigonometric polynomials form a dense subspace of \(L^2(\widehat H,\mu_{\widehat H})\) for the compact abelian group \(\widehat H\), it follows that \(g=0\) in \(L^2(\widehat H,\mu_{\widehat H})\), and hence in \(\mathcal W_t\). Thus \(J_t\) is injective.

We equip $J_t(\mathcal W_t)$ with the inner product transported from $\mathcal W_t$, namely $\langle J_t g_1, J_t g_2\rangle = \langle g_1,g_2\rangle_{\mathcal W_t}$. With this inner product, for \(g\in\mathcal W_t\) and \(\gamma\in H\),
\[
\langle J_t g, J_t K_{t,\gamma}\rangle
=
\langle g,K_{t,\gamma}\rangle_{\mathcal W_t}
=
(J_t g)(\gamma),
\]
so evaluation is represented by \(J_t K_{t,\gamma}\).

For \(\eta\in H\), we compute
\begin{align*}
(J_t K_{t,\eta})(\gamma)
&=
\int_{\widehat H}
e^{-t\lambda_H(\theta)}\,\theta(\eta)\,\overline{\theta(\gamma)}
\,d\mu_{\widehat H}(\theta) \\
&=
\int_{\widehat H}
e^{-t\lambda_H(\theta)}\,\theta(\eta-\gamma)
\,d\mu_{\widehat H}(\theta).
\end{align*}
By the Fourier inversion formula corresponding to $\widehat{p_t^\psi}(\theta)=e^{-t\lambda_H(\theta)}$, this equals $p_t^\psi(\eta-\gamma)$. Since $p_t^\psi$ is symmetric, we have $p_t^\psi(\eta-\gamma)=p_t^\psi(\gamma-\eta)=k_t(\gamma,\eta)$. Thus \(J_t(\mathcal W_t)\), equipped with the transported inner product, is a reproducing kernel Hilbert space with reproducing kernel \(k_t\).

Since \(J_t(\mathcal W_t)\) is a reproducing kernel Hilbert space with reproducing kernel \(k_t\), and \(\mathcal H_t\) is uniquely determined by this kernel, it follows that \(J_t(\mathcal W_t)=\mathcal H_t\) isometrically. Consequently, for \(f=J_t g\),
\[
\|f\|_{\mathcal H_t}^2
=
\|g\|_{\mathcal W_t}^2
=
\int_{\widehat H}
|g(\theta)|^2\,e^{t\lambda_H(\theta)}\,d\mu_{\widehat H}(\theta),
\]
and uniqueness of \(g\) follows from injectivity of \(J_t\).
\end{proof}

Fix \(s,t>0\). For all \(x,y\in H\), we have
\begin{align*}
k_{s+t}(x,y)
&=
p_{s+t}^\psi(x-y) \\
&=
\sum_{z\in H} p_s^\psi(x-z)p_t^\psi(z-y) \\
&=
\sum_{z\in H} k_s(x,z)\,k_t(z,y),
\end{align*}
where the series converges absolutely since \(p_s^\psi,p_t^\psi\in \ell^1(H)\), so their convolution is absolutely summable. This is the identity $P_{s+t}=P_sP_t$ at the level of kernels.

If \(0<s<t\), then \(e^{s\lambda_H(\theta)}\le e^{t\lambda_H(\theta)}\) for all \(\theta\in\widehat H\). By Lemma~\ref{lem:spectral-Ht}, it follows that
\begin{align*}
\|f\|_{\mathcal H_s}^2
&=
\int_{\widehat H}
|g(\theta)|^2\,e^{s\lambda_H(\theta)}\,d\mu_{\widehat H}(\theta) \\
&\le
\int_{\widehat H}
|g(\theta)|^2\,e^{t\lambda_H(\theta)}\,d\mu_{\widehat H}(\theta) \\
&=
\|f\|_{\mathcal H_t}^2
\end{align*}
for all \(f\in \mathcal H_t\), where \(g\in\mathcal W_t\) is the unique element such that \(f=J_t g\). In particular, the inclusion \(\mathcal H_t\hookrightarrow \mathcal H_s\) is continuous whenever \(0<s<t\).

For $t>0$ and $R>0$, define
\begin{equation*}
k_{t,R}(x,y)
=
\int_{\widehat H}
\theta(x-y)\,e^{-t\lambda_{H,R}(\theta)}\,d\mu_{\widehat H}(\theta)
\end{equation*}
for $x,y\in H$. Since $\nu_R\le \nu$ and $\nu\in \ell^1(H\setminus\{0\})$, the series defining $\lambda_{H,R}$ converges absolutely and consists of nonnegative terms. For each \(\kappa\in H\setminus\{0\}\), the function \(\theta\mapsto 1-\Re\,\theta(\kappa)\) is continuous and negative definite on \(\widehat H\). It follows that \(\lambda_{H,R}\), being a nonnegative summable linear combination of such functions, is continuous and negative definite. Hence \(e^{-t\lambda_{H,R}}\) is positive definite on \(\widehat H\), and therefore \(k_{t,R}\) is real, symmetric, and positive definite on \(H\). Let \(\mathcal H_{t,R}\) denote the associated reproducing kernel Hilbert space.

Since $\nu \in \ell^1(H \setminus \{0\})$, we have
$0 \le \lambda_H(\theta) \le 2\sum_{\kappa \in H \setminus \{0\}} \nu(\kappa) < \infty$
for all $\theta \in \widehat H$.

\begin{lemma}\label{lem:trunc-norm}
Let \(t>0\) and let \(f\in c_{00}(H)\). Then
\[
\|f\|_{\mathcal H_{t,R}}^2 \uparrow \|f\|_{\mathcal H_t}^2
\quad \text{as } R\to\infty.
\]
\end{lemma}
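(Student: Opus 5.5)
The approach is to obtain a spectral formula for $\|f\|_{\mathcal H_{t,R}}^2$ analogous to Lemma~\ref{lem:spectral-Ht}, express both norms through the Fourier transform of $f$, and then apply monotone convergence using Lemma~\ref{lem:lambdaHR-monotone}.

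\textbf{Step 1: spectral description of $\mathcal H_{t,R}$.} Since $0\le\lambda_{H,R}\le\lambda_H\le 2\sum_{\kappa\ne0}\nu(\kappa)$, the function $\lambda_{H,R}$ is bounded, continuous, and nonnegative. The proof of Lemma~\ref{lem:spectral-Ht} never uses the specific form of $\lambda_H$. Its ingredients are boundedness and the identity $k_t(\gamma,\eta)=\int_{\widehat H}e^{-t\lambda_H}\theta(\eta-\gamma)\,d\mu_{\widehat H}$, which for $k_{t,R}$ holds by definition (using $\theta(x-y)$ versus $\theta(y-x)$ together with realness/symmetry). So the same argument, with $\lambda_H$ replaced by $\lambda_{H,R}$, shows that $J$ is an isometric isomorphism from $L^2(\widehat H,e^{t\lambda_{H,R}}d\mu_{\widehat H})$ onto $\mathcal H_{t,R}$. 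Consequently
\[
\|f\|_{\mathcal H_{t,R}}^2=\int_{\widehat H}|g(\theta)|^2e^{t\lambda_{H,R}(\theta)}\,d\mu_{\widehat H}(\theta),
\]
where $g$ is the unique $L^2$ function with $f(\gamma)=\int g\,\overline{\theta(\gamma)}\,d\mu_{\widehat H}$.

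\textbf{Step 2: identify $g$ for $f\in c_{00}(H)$.} Take $g(\theta)=\sum_{h}f(h)\theta(h)$, a trigonometric polynomial. Orthonormality of characters on the compact group $\widehat H$ gives $\int g\,\overline{\theta(\gamma)}\,d\mu_{\widehat H}=f(\gamma)$. The key point is that this $g$ does not depend on $R$, because $J$ is the same inversion map for every weight and all the weighted spaces coincide with $L^2(\widehat H)$ as sets. By injectivity, the same $g$ also represents $f$ in $\mathcal H_t$ through Lemma~\ref{lem:spectral-Ht}. Hence $f\in\mathcal H_{t,R}\cap\mathcal H_t$ for every $R$.

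\textbf{Step 3: monotone convergence.} By Lemma~\ref{lem:lambdaHR-monotone}, $\lambda_{H,R}(\theta)\uparrow\lambda_H(\theta)$ pointwise. Therefore $|g|^2e^{t\lambda_{H,R}}\uparrow|g|^2e^{t\lambda_H}$ pointwise, and these are nonnegative measurable functions. Monotone convergence then gives $\|f\|_{\mathcal H_{t,R}}^2\uparrow\|f\|_{\mathcal H_t}^2$. The limit is finite since $\lambda_H$ is bounded and $g\in L^2$.

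The main obstacle is Step 1, namely checking that the argument of Lemma~\ref{lem:spectral-Ht} transfers. The only non-formal input there is that the kernel produced by $J$ equals $k_{t,R}$. For $k_{t,R}$ this is the definition, so no Fourier inversion for a heat kernel is needed. We only have to track the sign convention: $k_{t,R}$ is real and symmetric, so $k_{t,R}(x,y)=k_{t,R}(y,x)$. A minor side issue is measurability of $\lambda_{H,R}$, which follows from its continuity as a uniformly convergent sum.
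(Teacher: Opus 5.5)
Your proposal is correct and follows the paper's proof essentially step for step: you transfer the spectral isometry of Lemma~\ref{lem:spectral-Ht} to the bounded continuous symbol $\lambda_{H,R}$, represent $f\in c_{00}(H)$ by an $R$-independent trigonometric polynomial, and conclude by monotone convergence via Lemma~\ref{lem:lambdaHR-monotone}. Your explicit choice $g(\theta)=\sum_h f(h)\theta(h)$ is slightly more careful than the paper's $|\widehat f(\theta)|^2$; the two integrals agree because $\lambda_{H,R}$, $\lambda_H$, and Haar measure are invariant under $\theta\mapsto\overline\theta$.
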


\begin{proof}
Fix \(t>0\) and \(f\in c_{00}(H)\). Since \(f\) has finite support, its Fourier transform \(\widehat f\) is a trigonometric polynomial on \(\widehat H\), and hence \(\widehat f\in L^2(\widehat H,\mu_{\widehat H})\). Since $\nu \in \ell^1(H \setminus \{0\})$, the function $\lambda_H$ is bounded.
It follows that
\[
|\widehat f(\theta)|^2 e^{t\lambda_H(\theta)}
\le
e^{t\|\lambda_H\|_\infty} |\widehat f(\theta)|^2 \in L^1(\widehat H,\mu_{\widehat H}),
\]
so \(f\in \mathcal H_t\). By Lemma~\ref{lem:spectral-Ht}, we have
\[
\|f\|_{\mathcal H_t}^2
=
\int_{\widehat H}
|\widehat f(\theta)|^2 e^{t\lambda_H(\theta)}\,d\mu_{\widehat H}.
\]

We next establish the corresponding representation for $\|f\|_{\mathcal H_{t,R}}$. The kernel $k_{t,R}$ is defined by the Fourier multiplier $e^{-t\lambda_{H,R}}$, and the proof of Lemma~\ref{lem:spectral-Ht} applies with $\lambda_{H,R}$ in place of $\lambda_H$, once one notes that $\lambda_{H,R}$ is bounded and continuous. Since $\nu_R\le \nu$, we have
\begin{align*}
\lambda_{H,R}(\theta)
&\le 2\sum_{\kappa\in H\setminus\{0\}}\nu_R(\kappa) \\
&\le 2\sum_{\kappa\in H\setminus\{0\}}\nu(\kappa)
< \infty
\end{align*}
for $\theta\in\widehat H$, and hence the same bound as above shows that $|\widehat f|^2 e^{t\lambda_{H,R}}$ is integrable. Consequently,
\[
\|f\|_{\mathcal H_{t,R}}^2
=
\int_{\widehat H}
|\widehat f(\theta)|^2 e^{t\lambda_{H,R}(\theta)}\,d\mu_{\widehat H}.
\]

By Lemma~\ref{lem:lambdaHR-monotone}, we have \(\lambda_{H,R}(\theta)\uparrow \lambda_H(\theta)\) for all \(\theta\in\widehat H\) as \(R\to\infty\). Therefore, $|\widehat f(\theta)|^2 e^{t\lambda_{H,R}(\theta)} \uparrow |\widehat f(\theta)|^2 e^{t\lambda_H(\theta)}$ for all $\theta\in\widehat H$, and the integrands are nonnegative. Since the limit is integrable, the monotone convergence theorem gives
\begin{align*}
\|f\|_{\mathcal H_{t,R}}^2 &= \int_{\widehat H} |\widehat f(\theta)|^2 e^{t\lambda_{H,R}(\theta)}\,d\mu_{\widehat H} \\
&\uparrow \int_{\widehat H} |\widehat f(\theta)|^2 e^{t\lambda_H(\theta)}\,d\mu_{\widehat H} \\
&= \|f\|_{\mathcal H_t}^2,
\end{align*}
which proves the claim.
\end{proof}

Fix $t>0$. Define $\Phi_t:H\to\mathcal H_t$ by $\Phi_t(x)=k_t(\cdot,x)$ for $x\in H$. For $\kappa\in H$, define $(\tau_\kappa f)(y)=f(y-\kappa)$. Since $p_t^\psi$ is translation invariant, $\Phi_t(x+\kappa)=\tau_\kappa\Phi_t(x)$ for all $x,\kappa\in H$.

The linear span of $\{k_t(\cdot,x):x\in H\}$ is dense in $\mathcal H_t$.

\section{Random Walks on Virtual Persistence Diagrams}
\label{sec:stoch-mc}

This section studies the heat semigroup on $H$ through the random walk $(X_t)_{t\ge0}$, its transition kernel $(p_t)_{t\ge0}$, and its L\'evy--Khintchine exponent $\lambda_H$ from Theorem~\ref{thm:lk-H}. The first several subsections present toy examples, while the final subsection contains the main result of the section. These toy examples analyze concrete quantities associated with the heat semigroup on $H$ and show them as probabilities, operator norms, and constants in the inequalities below, and are organized around four quantities associated with the random walk and its Fourier representation through $\lambda_H$.

The four quantities are
\[
p_t(0), \qquad
\sum_{h\in H} p_t(h)^2, \qquad
\int_{\widehat H}\lambda_H(\theta)e^{-t\lambda_H(\theta)}\,d\mu_{\widehat H}(\theta), \qquad
G_s(0,0),
\]
namely the return probability, the collision probability, the heat kernel energy, and the diagonal resolvent value. Subsection~\ref{subsec:heat-invariants} and Subsection~\ref{subsec:rkhs-interpretation} identify these quantities through the identities
\[
\|\mathrm{ev}_0\|_{\mathcal H_t^*}^2 = p_t(0),
\qquad
\|P_t\|_{\ell^2(H)\to\ell^\infty(H)}^2 = \sum_{h\in H} p_t(h)^2.
\]
The heat kernel energy and the diagonal resolvent value are the constants in the Lipschitz bound and the resolvent inequality in Theorem~\ref{thm:lip-rkhs-invariants} and Theorem~\ref{thm:sobolev-resolvent}. Theorem~\ref{thm:mass-tail} relates the L\'evy measure $\nu$ to tail bounds for $\rho(X_t,0)$. Theorem~\ref{thm:covering-high-density} bounds the covering numbers of the superlevel sets $\{h\in H : p_t(h)\ge \alpha\}$ in terms of the collision probability.

We introduce the truncated L\'evy measure $\nu_R$, the truncated exponent $\lambda_{H,R}$, and the associated convolution semigroups $(p_{t,R})_{t\ge0}$. By Corollary~\ref{cor:finite-activity-compound-poisson}, the semigroups $(p_{t,R})_{t\ge0}$ are compound Poisson, and Lemma~\ref{lem:lambdaHR-monotone} with Lemma~\ref{lem:trunc-norm} show that the corresponding return probabilities, collision probabilities, heat kernel energies, and resolvent values converge monotonically to their full counterparts. The final subsection is of a different nature and contains the main result of the section, namely the heat-scale majorization theorem. It passes from the heat kernels $p_t$, equivalently $e^{-t\lambda_H}$, to kernels defined by mixtures~\cite{berg1984harmonic}
\[
m_\eta(\lambda) = \int_{[0,\infty)} e^{-u\lambda}\,d\eta(u),
\]
applied to $\lambda_H$, and establishes heat-scale majorization under convex order of the measures $\eta$.

By Theorem~\ref{thm:lk-H} and the L\'evy--Khintchine representation \eqref{eq:lk-series}, the convolution semigroup $(p_t)_{t\ge0}$ on the subgroup $H$ has continuous negative definite symbol $\lambda_H:\widehat H\to[0,\infty)$ of the form
\begin{align*}
\lambda_H(\theta)
&=
\sum_{\kappa\in H\setminus\{0\}}
\nu(\kappa)\bigl(1-\Re\,\theta(\kappa)\bigr) \\
&=
\sup_{\substack{J \subset H\setminus\{0\} \\ |J|<\infty}}
\sum_{\kappa\in J}
\nu(\kappa)\bigl(1-\Re\,\theta(\kappa)\bigr)
\end{align*}
for $\theta\in\widehat H$, with symmetric L\'evy measure $\nu$ defined above~\cite[Theorem~18.19]{BergForst1975}. On $\ell^2(H)$ we define the associated Dirichlet form $\mathcal E_H$ by
\begin{equation*}
\mathcal E_H(f,g)
=
\int_{\widehat H}
\lambda_H(\theta)\,\widehat f(\theta)\,\overline{\widehat g(\theta)}\,
d\mu_{\widehat H}(\theta)
\end{equation*}
for $f,g\in\mathcal D(\mathcal E_H)$, with domain
\[ \mathcal D(\mathcal E_H) = \Bigl\{ f\in\ell^2(H) : \int_{\widehat H} \lambda_H(\theta)\,|\widehat f(\theta)|^2\, d\mu_{\widehat H}(\theta) <\infty \Bigr\}. \]
This is the quadratic form of the generator of $(P_t)_{t\ge0}$ and gives the energy term in the resolvent and Sobolev--type bounds below.

\subsection{Heat kernel and resolvent invariants}
\label{subsec:heat-invariants}

We define heat kernel and resolvent quantities for the symmetric random walk $(X_t)_{t\ge0}$ on $H$, express them in terms of the transition kernel $p_t$, and identify them with the analytic objects determined by the generator and Dirichlet form $\mathcal E_H$.

\subsubsection{Return probability}

For $t>0$, the \emph{return probability} of the random walk is
\[
 \mathbb P(X_t=0)=p_t(0).
\]
This is the on--diagonal heat kernel. By Fourier inversion on the countable
abelian group $H$,
\[
 p_t(0)
 =
 \int_{\widehat H} e^{-t\lambda_H(\theta)}\,d\mu_{\widehat H}(\theta).
\]

The rate of decay of the return probability is given by its right derivative. For
$t>0$,
\[
 -\frac{d}{dt}p_t(0)
 =
 \int_{\widehat H}
 \lambda_H(\theta)\,e^{-t\lambda_H(\theta)}\,
 d\mu_{\widehat H}(\theta).
\]
Differentiation under the integral is justified by the bound
$\lambda e^{-t\lambda}\le (et)^{-1}$ for all $\lambda\ge0$ and $t>0$.

Let $L$ denote the nonnegative self--adjoint generator associated with the Dirichlet
form $\mathcal E_H$. The same quantity admits the equivalent representations
\begin{align*}
 -\frac{d}{dt}p_t(0)
 &=
 \langle \delta_0,\,L e^{-tL}\delta_0\rangle_{\ell^2(H)} \\
 &=
 \mathcal E_H\!\left(e^{-tL/2}\delta_0,\,e^{-tL/2}\delta_0\right).
\end{align*}
So the decay of the return probability coincides with the Dirichlet energy of the heat kernel at time $t$.

It is convenient to normalize the heat kernel weight and consider the
probability measure on $\widehat H$ given by
\[
 d\mu_t(\theta)
 =
 \frac{e^{-t\lambda_H(\theta)}}{\int_{\widehat H} e^{-t\lambda_H}\,d\mu_{\widehat H}}
 \,d\mu_{\widehat H}(\theta).
\]
With this notation,
\[
 \mathbb E_{\mu_t}[\lambda_H]
 =
 \frac{\displaystyle\int_{\widehat H}
 \lambda_H(\theta)\,e^{-t\lambda_H(\theta)}\,d\mu_{\widehat H}(\theta)}
 {\displaystyle\int_{\widehat H}
 e^{-t\lambda_H(\theta)}\,d\mu_{\widehat H}(\theta)}
\]
is the mean spectral energy under the heat kernel weight at time $t$.

\subsubsection{Collision probability}

Let $X_t$ and $X_t'$ be two independent copies of the random walk on $H$, both
started at the identity. The \emph{collision probability} at time $t>0$ is
\[
 \mathbb P(X_t=X_t').
\]
By independence and translation invariance,
\begin{align*}
 \mathbb P(X_t=X_t')
 &=
 \sum_{h\in H} p_t(h)^2 \\
 &=
 \|p_t\|_{\ell^2(H)}^2.
\end{align*}
Plancherel's theorem gives the spectral representation
\[ \mathbb P(X_t=X_t') = \int_{\widehat H} e^{-2t\lambda_H(\theta)}\,d\mu_{\widehat H}(\theta). \]

The collision probability controls the $L^2\!\to\!L^\infty$ behavior of the
semigroup. For every $f\in\ell^2(H)$ and $t>0$,
\[ \|P_t f\|_\infty \le \mathbb P(X_t=X_t')^{1/2}\,\|f\|_2, \]
and the constant is sharp.

\subsubsection{Resolvent}

Let $L$ be the generator associated to $\mathcal E_H$. For $s>0$, define the
resolvent operator
\[ R_s = (s+L)^{-1} \]
and its diagonal kernel
\[ G_s(0,0) = \langle \delta_0,\,R_s\delta_0\rangle_{\ell^2(H)}. \]
This quantity admits the occupation--time representation
\begin{align*}
 G_s(0,0) &= \mathbb E\!\left[ \int_0^\infty e^{-st}\,\mathbf 1_{\{X_t=0\}}\,dt \right] \\
 &= \int_0^\infty e^{-st}\,p_t(0)\,dt.
\end{align*}

By the spectral theorem, the same quantity can be written equivalently as
\[ G_s(0,0) = \int_{\widehat H} \frac{1}{s+\lambda_H(\theta)}\,d\mu_{\widehat H}(\theta). \]

\subsection{Mass and covering inequalities}
\label{subsec:mass-covering}

This subsection studies the distribution of the heat kernel $p_t$ in $(H,\rho)$. We define the mass functional $\mathcal M$, prove $\rho(g,0)\le\mathcal M(g)$, and obtain tail bounds for $\mathcal M(X_t)$ from the L\'evy measure $\nu$. We also bound the covering numbers of the superlevel sets $\{h\in H:p_t(h)\ge\alpha\}$ using the quantity $\sum_{h\in H}p_t(h)^2$.

\begin{definition}\label{def:mass}
For $g\in K(X,A)$, write
\[
 g
 =
 \sum_{u\in X/A\setminus\{[A]\}} n_u\,e_u
\]
with $n_u\in\mathbb Z$ and only finitely many nonzero coefficients. The \emph{mass} of $g$ is defined by
\[ \mathcal M(g) = \sum_{u\in X/A\setminus\{[A]\}} |n_u|\,\overline d_1(u,[A]). \]
For $\alpha\in D(X,A)$, this coincides with $\mathcal M(\alpha)=\sum_{x\in\alpha}\overline d_1(x,[A])$.
\end{definition}

The functional $\mathcal M$ is the weighted $\ell^1$--norm of the coefficient vector $(n_u)_u$, with weights given by the distance to the basepoint $[A]$. In particular, $\mathcal M$ is homogeneous and subadditive, and $\mathcal M(g)=0$ if and only if $g=0$, since $\overline d_1(u,[A])>0$ for $u\neq[A]$.

\begin{lemma}\label{lem:rho-mass}
For every $g\in K(X,A)$,
\[ \rho(g,0)\ \le\ \mathcal M(g). \]
\end{lemma}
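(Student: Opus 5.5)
The plan is to reduce to the basic generators and then use that $\rho$ is a translation-invariant metric on $K(X,A)$. First I establish the single-generator estimate $\rho(e_u,0)\le \overline d_1(u,[A])$ for each $u\in X/A\setminus\{[A]\}$. By the defining formula $\rho(\alpha-\beta,\gamma-\delta)=W_1(\alpha+\delta,\gamma+\beta)$ with $\alpha=e_u$ and $\beta=\gamma=\delta=0$, we get $\rho(e_u,0)=W_1(e_u,0)$. Here $e_u$ is the one-point diagram at $u$. In $W_1$ on $D(X,A)$ an unmatched point may be sent to the basepoint $[A]$ at cost $\overline d_1(u,[A])$. The matching that sends $u$ to $[A]$ is therefore admissible, and this gives $W_1(e_u,0)\le \overline d_1(u,[A])$. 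In fact equality holds, but only the upper bound is needed.

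Next I handle multiples and negatives. Translation invariance gives $\rho(g+h,0)\le \rho(g+h,h)+\rho(h,0)=\rho(g,0)+\rho(h,0)$, so $\rho(\cdot,0)$ is subadditive. Translation invariance also gives $\rho(-g,0)=\rho(0,g)=\rho(g,0)$, so $\rho(\cdot,0)$ is symmetric under negation. Together these yield $\rho(n e_u,0)\le |n|\,\rho(e_u,0)$ for all $n\in\mathbb Z$.

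Finally, write $g=\sum_u n_u e_u$ as a finite sum. Subadditivity over the finitely many nonzero terms, combined with the two previous steps, gives
\[
\rho(g,0)\le \sum_u |n_u|\,\rho(e_u,0)\le \sum_u |n_u|\,\overline d_1(u,[A])=\mathcal M(g).
\]

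The main care point is the first step. I need to confirm that the convention for $W_1$ in \cite{bubenik2022virtualpd} really permits matching a point to the diagonal at cost $\overline d_1(u,[A])$. Since $D(X,A)=D(X)/D(A)$ identifies $[A]$ with $0$, the diagram $e_u$ can be matched against $e_{[A]}=0$, and the cost of that matching is $\overline d_1(u,[A])$, which confirms the bound. The translation invariance of $\rho$ used in the later steps is the stated $p=1$ invariance of $W_1$.
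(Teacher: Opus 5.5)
Your proof is correct, but it is organized differently from the paper's.

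\textbf{The paper's route.} It writes $g=\alpha-\beta$ and cancels the common part $\min\{\alpha,\beta\}$, using translation invariance of $W_1$. This gives disjointly supported $\alpha',\beta'$ with $\alpha'(u)=\max(n_u,0)$ and $\beta'(u)=\max(-n_u,0)$. It then exhibits a single matching of the whole diagram pair in which every point goes to $[A]$. The cost of that matching is exactly $\mathcal M(g)$.

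\textbf{Your route.} You only ever use the one-point matching $u\mapsto[A]$ and push everything else through the metric structure. The triangle inequality plus translation invariance make $g\mapsto\rho(g,0)$ a symmetric subadditive function on $K(X,A)$. Any such function bounded on the generators $e_u$ by $\overline d_1(u,[A])$ is automatically bounded by the weighted $\ell^1$ norm $\mathcal M$.

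\textbf{What each buys.} Your argument avoids multi-point matchings and the cancellation step, since the reduced expansion $g=\sum_u n_u e_u$ does that bookkeeping for you. In exchange, it needs the triangle inequality for $\rho$ on $K(X,A)$, which the paper's argument never uses. That inequality does hold: it follows from the triangle inequality for $W_1$ after adding suitable common diagrams to the arguments and using translation invariance. The paper's argument, conversely, uses only the definition of $W_1$ as an infimum over admissible matchings together with translation invariance of $W_1$. Your side remark that $W_1(e_u,0)=\overline d_1(u,[A])$ holds with equality is also correct, though not needed.
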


\begin{proof}
Write $g=\alpha-\beta$ with $\alpha,\beta\in D(X,A)$. By definition of $\rho$,
\begin{align*}
 \rho(g,0) &= \rho(\alpha-\beta,0) \\
 &= W_1(\alpha,\beta).
\end{align*}
Let $\gamma(u)=\min\{\alpha(u),\beta(u)\}$ for $u\in X/A$, and write $\alpha=\gamma+\alpha'$, $\beta=\gamma+\beta'$, so that $\alpha'$ and $\beta'$ have disjoint support. Then $W_1(\alpha,\beta)=W_1(\alpha',\beta')$. If $g=\sum_u n_u e_u$, we have
\[ \alpha'(u)=\max(n_u,0), \qquad \beta'(u)=\max(-n_u,0). \]
Consider the admissible matching that sends every point of $\alpha'$ and $\beta'$ to the basepoint $[A]\in X/A$. Its total cost is
\[
 \sum_u \alpha'(u)\,\overline d_1(u,[A]) +
 \sum_u \beta'(u)\,\overline d_1(u,[A]) =
 \sum_u |n_u|\,\overline d_1(u,[A]) =
 \mathcal M(g).
\]
Since $W_1(\alpha',\beta')$ is the infimum of the costs of all admissible matchings, we obtain $W_1(\alpha',\beta')\le\mathcal M(g)$, and hence $\rho(g,0)\le\mathcal M(g)$.
\end{proof}

Lemma~\ref{lem:rho-mass} shows that $\mathcal M$ is a coarse geometric control function for the VPD metric: tail bounds for $\mathcal M(X_t)$ immediately imply tail bounds for $\rho(X_t,0)$.

We now quantify the probability that the random walk $X_t$ has large mass.

\begin{theorem}\label{thm:mass-tail}
Let $(X_t)_{t\ge0}$ be the L\'evy process on $H$ with convolution semigroup $(p_t)_{t\ge0}$ and L\'evy measure $\nu$, started at $X_0=0$. Then for every $t>0$ and $R>0$,
\[ \mathbb P\bigl(\mathcal M(X_t)>R\bigr) \;\le\; t\,\nu\{\kappa\in H\setminus\{0\} : \mathcal M(\kappa)>R\} \;+\; \frac{t}{R} \int_{\{\kappa\in H\setminus\{0\} : \mathcal M(\kappa)\le R\}} \mathcal M(\kappa)\,d\nu(\kappa). \]
In particular, by Lemma~\ref{lem:rho-mass}, the same bound holds with $\mathcal M(X_t)$ replaced by $\rho(X_t,0)$.
\end{theorem}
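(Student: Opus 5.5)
The plan is to use the compound Poisson representation from Corollary~\ref{cor:finite-activity-compound-poisson}. Since $q=\sum_{\kappa\ne0}\nu(\kappa)<\infty$, we may realize $X_t=\sum_{j=1}^{N_t}\xi_j$, where $N_t$ is Poisson with rate $q$ and the $\xi_j$ are i.i.d.\ with law $\pi=\nu/q$. If $q=0$ then $X_t=0$ and there is nothing to prove. Equivalently, we decompose the jumps of the process: the jump times of $X$ form a Poisson random measure on $[0,\infty)\times(H\setminus\{0\})$ with intensity $dt\otimes\nu$.

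Split the jumps at level $R$ in mass. Let $N^{>}_t$ be the number of jumps $\xi_j$ with $\mathcal M(\xi_j)>R$ up to time $t$. By Poisson thinning, $N^{>}_t$ is Poisson with mean $t\,\nu\{\mathcal M>R\}$. Let $Y_t=\sum_{j\le N_t,\ \mathcal M(\xi_j)\le R}\xi_j$ be the sum of the small jumps. On the event $\{N^>_t=0\}$ we have $X_t=Y_t$. Hence
\[
\mathbb P(\mathcal M(X_t)>R)\le \mathbb P(N^>_t\ge1)+\mathbb P(\mathcal M(Y_t)>R).
\]
The first term is $1-e^{-t\nu\{\mathcal M>R\}}\le t\,\nu\{\mathcal M>R\}$.

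For the second term, subadditivity of $\mathcal M$ (noted after Definition~\ref{def:mass}) gives $\mathcal M(Y_t)\le\sum_{j\le N_t}\mathcal M(\xi_j)\mathbf 1_{\{\mathcal M(\xi_j)\le R\}}$. Conditioning on $N_t$ and using independence (Wald's identity), the expectation of this sum is
\[
\mathbb E[N_t]\;\mathbb E\bigl[\mathcal M(\xi_1)\mathbf 1_{\{\mathcal M(\xi_1)\le R\}}\bigr]
= qt\cdot\frac1q\sum_{\mathcal M(\kappa)\le R}\mathcal M(\kappa)\nu(\kappa).
\]
Markov's inequality then yields the second term $\frac tR\int_{\{\mathcal M\le R\}}\mathcal M\,d\nu$.

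The final sentence of the theorem follows directly from Lemma~\ref{lem:rho-mass}, since $\{\rho(X_t,0)>R\}\subset\{\mathcal M(X_t)>R\}$.

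The only delicate point is justifying the thinning step and the identification of $X_t$ with the compound Poisson sum. This identification holds in law, which suffices because the statement concerns only the law of $X_t$. Thinning for marked Poisson processes is standard. To avoid it, one can bound the probability directly: $\mathbb P(\exists j\le N_t:\mathcal M(\xi_j)>R)\le\mathbb E\sum_{j\le N_t}\mathbf 1_{\{\mathcal M(\xi_j)>R\}}=t\,\nu\{\mathcal M>R\}$, again by Wald's identity. This replaces thinning with a union bound.
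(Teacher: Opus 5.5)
Your proposal is correct and follows essentially the same route as the paper: split the jumps at mass level $R$, bound the probability of at least one large jump by $t\,\nu\{\mathcal M>R\}$, and control the small-jump part by subadditivity of $\mathcal M$ together with Markov's inequality. Realizing $X_t$ in law as the compound Poisson sum of Corollary~\ref{cor:finite-activity-compound-poisson} and applying Wald's identity makes your version slightly more careful. In particular, it yields the expectation bound as the inequality $\mathbb E[\mathcal M(Y_t)]\le t\int_{\{\mathcal M\le R\}}\mathcal M\,d\nu$, whereas the paper states an equality for $\mathcal M(X_t^{\le R})$ itself, which holds only for the sum of the jump masses.
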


\begin{proof}
Fix $R>0$. Decompose the jumps of $X_t$ into \emph{large} and \emph{small} according to the mass threshold $R$: a jump $\kappa$ is called large if
$\mathcal M(\kappa)>R$ and small if $\mathcal M(\kappa)\le R$.

Let $N_t^{>R}$ denote the number of large jumps up to time $t$. By the L\'evy--Khintchine construction, $N_t^{>R}$ is a Poisson random variable with mean
\[
 t\,\nu\{\kappa\in H\setminus\{0\} : \mathcal M(\kappa)>R\}.
\]
Therefore,
\begin{align*}
 \mathbb P\bigl(N_t^{>R}\ge1\bigr)
 &=
 1-\exp\!\bigl(-t\,\nu\{\mathcal M>R\}\bigr) \\
 &\le
 t\,\nu\{\mathcal M>R\}.
\end{align*}

Next, let $X_t^{\le R}$ denote the process obtained from $X_t$ by retaining only the small jumps. Then $X_t^{\le R}$ is again a L\'evy process on $H$, with L\'evy measure $\nu^{\le R}=\nu|_{\{\mathcal M\le R\}}$. If no large jump occurs up to time $t$, then $X_t=X_t^{\le R}$.

By subadditivity of $\mathcal M$ along jumps,
\[
 \mathcal M(X_t^{\le R})
 \;\le\;
 \sum_{j=1}^{N_t^{\le R}} \mathcal M(J_j),
\]
where $\{J_j\}_{j\ge1}$ are the small jumps and $N_t^{\le R}$ is their total number up to time $t$. Taking expectations and using the independence of jumps gives
\[ \mathbb E\bigl[\mathcal M(X_t^{\le R})\bigr] = t\int_{\{\mathcal M\le R\}} \mathcal M(\kappa)\,d\nu(\kappa). \]
By Markov's inequality,
\[ \mathbb P\bigl(\mathcal M(X_t^{\le R})>R\bigr) \le \frac{t}{R} \int_{\{\mathcal M\le R\}} \mathcal M(\kappa)\,d\nu(\kappa). \]

The event $\{\mathcal M(X_t)>R\}$ can occur only if at least one large jump occurs or if the accumulated contribution of small jumps exceeds $R$. Hence
\[ \{\mathcal M(X_t)>R\} \subset \{N_t^{>R}\ge1\} \;\cup\; \{\mathcal M(X_t^{\le R})>R\}, \]
and the stated bound follows by combining the two estimates above. The final claim follows from $\rho(X_t,0)\le\mathcal M(X_t)$.
\end{proof}

We next turn to covering numbers for subsets of the group $H$.

\begin{definition}\label{def:covering-number}
For a subset $S\subset K(X,A)$ and $\varepsilon>0$, the \emph{covering number} $N(S,\varepsilon)$ is the smallest integer $m$ such that $S$ can be covered by $m$ open $\rho$--balls of radius $\varepsilon$.
\end{definition}

Translation invariance of $\rho$ implies $N(S+g,\varepsilon)=N(S,\varepsilon)$ for all $g\in K(X,A)$. Since $(K(X,A),\rho)$ is discrete and $\rho$ is translation invariant, there exists $\varepsilon_0>0$ such that every open $\rho$--ball of radius $\varepsilon<\varepsilon_0$ contains at most one point of $H$. Consequently, if $S\subset H$ is finite and $\varepsilon\in(0,\varepsilon_0)$, then $N(S,\varepsilon)=|S|$.

For $t>0$ and $\alpha>0$, define the heat kernel superlevel set $A_t(\alpha)=\{h\in H : p_t(h)\ge\alpha\}$.

\begin{theorem}
\label{thm:covering-high-density}
Let $t>0$ and $\alpha>0$. There exists $\varepsilon_0>0$ such that for every $\varepsilon\in(0,\varepsilon_0)$,
\begin{align*}
 N\bigl(A_t(\alpha),\varepsilon\bigr)
 &\le
 \min\left\{
 \frac{1}{\alpha},
 \frac{\mathbb P(X_t=X_t')}{\alpha^2}
 \right\} \\
 &=
 \min\left\{
 \frac{1}{\alpha},
 \frac{1}{\alpha^2}\sum_{h\in H}p_t(h)^2
 \right\}.
\end{align*}
where $X_t'$ is an independent copy of $X_t$ started at the origin.
\end{theorem}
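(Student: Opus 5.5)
The plan is to reduce the covering estimate to a cardinality estimate for the superlevel set $A_t(\alpha)$ and then bound that cardinality in two ways: by Markov's inequality for the probability mass $p_t$, and by Chebyshev's inequality for the collision probability.

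\textbf{Step 1: reduce to cardinality.} I take $\varepsilon_0>0$ as in the remark following Definition~\ref{def:covering-number}. Uniform discreteness of $(K(X,A),\rho)$ under assumption~(H), together with translation invariance of $\rho$, ensures that every open $\rho$-ball of radius $\varepsilon<\varepsilon_0$ contains at most one point of $H$. Covering a finite set $S\subset H$ by balls centered at its own points shows that $N(S,\varepsilon)\le|S|$, and in fact equality holds. It therefore suffices to show that $A_t(\alpha)$ is finite and that
\[
|A_t(\alpha)|\le\min\Bigl\{\tfrac1\alpha,\tfrac1{\alpha^2}\textstyle\sum_{h}p_t(h)^2\Bigr\}.
\]

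\textbf{Step 2: the first bound.} Since $p_t$ is a probability measure on $H$ (Theorem~\ref{thm:global} together with Lemma~\ref{lem:effective-support}), we have
\[
1\ge\sum_{h\in A_t(\alpha)}p_t(h)\ge\alpha\,|A_t(\alpha)|.
\]
This gives finiteness of $A_t(\alpha)$ and the bound $|A_t(\alpha)|\le 1/\alpha$.

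\textbf{Step 3: the second bound.} Similarly,
\[
\sum_{h\in H}p_t(h)^2\ge\sum_{h\in A_t(\alpha)}p_t(h)^2\ge\alpha^2\,|A_t(\alpha)|.
\]
The identity $\mathbb P(X_t=X_t')=\sum_h p_t(h)^2$ was established in the collision-probability subsection, via independence of the two copies. Taking the minimum of the bounds from Steps 2 and 3 yields both displayed forms of the statement.

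\textbf{Main obstacle.} The only delicate point is Step 1, namely justifying a uniform $\varepsilon_0$ with at most one point of $H$ per ball. I would argue it as follows. By (H), there is $\delta>0$ with $\overline d_1(u,v)\ge\delta$ for distinct $u,v\in X/A$. For $g\neq0$, any admissible matching realizing $\rho(g,0)=W_1(\alpha',\beta')$ must move at least one point a positive distance, so each such point contributes at least $\delta$; hence $\rho(g,0)\ge\delta$. By translation invariance, any two distinct points of $H$ are then at $\rho$-distance at least $\delta$. Choosing $\varepsilon_0=\delta/2$ ensures that a ball of radius $\varepsilon<\varepsilon_0$ has diameter less than $\delta$ and so contains at most one point of $H$. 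This same $\varepsilon_0$ works for all $t$ and $\alpha$, although the statement only requires its existence.
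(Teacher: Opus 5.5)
Your proof is correct and follows the paper's argument: you reduce the covering number to $|A_t(\alpha)|$ via a uniform $\varepsilon_0$, then bound the cardinality once by total mass and once by $\ell^2$ mass. The differences are minor. You get the second bound directly from $\sum_{h\in A_t(\alpha)}p_t(h)^2\ge\alpha^2|A_t(\alpha)|$, where the paper applies Cauchy--Schwarz to $\sum_{h\in A_t(\alpha)}p_t(h)$. You also derive $\varepsilon_0=\delta/2$ explicitly from uniform discreteness of $(X/A,\overline d_1)$, where the paper simply cites discreteness and translation invariance of $\rho$.
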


\begin{proof}
Since $p_t$ is a probability mass function on $H$,
\begin{align*}
 1
 &= \sum_{h\in H} p_t(h) \\
 &\ge \sum_{h\in A_t(\alpha)} p_t(h) \\
 &\ge \alpha\,|A_t(\alpha)|.
\end{align*}
and hence $|A_t(\alpha)|\le 1/\alpha$.

On the other hand, by Cauchy--Schwarz,
\[
 \sum_{h\in A_t(\alpha)} p_t(h)
 \le
 |A_t(\alpha)|^{1/2}
 \Bigl(\sum_{h\in H} p_t(h)^2\Bigr)^{1/2}.
\]
Combining this with the lower bound
$\sum_{h\in A_t(\alpha)} p_t(h)\ge\alpha |A_t(\alpha)|$ gives
\[
 |A_t(\alpha)|
 \le
 \frac{1}{\alpha^2}
 \sum_{h\in H} p_t(h)^2.
\]
Since $\sum_{h\in H} p_t(h)^2=\mathbb P(X_t=X_t')$, the stated cardinality bounds follow. For $\varepsilon\in(0,\varepsilon_0)$ we have $N(A_t(\alpha),\varepsilon)=|A_t(\alpha)|$, and the covering inequality follows.
\end{proof}

The mass tail inequality and the covering bounds obtained above give localized invariants of the heat kernel on the VPD group $(H,\rho)$. The mass functional $\mathcal M$ gives a control on the typical $\rho$--distance of the random walk $X_t$ from the identity in terms of the L\'evy measure, and the covering estimates bound the size of heat kernel superlevel sets in terms of the collision probability $\mathbb P(X_t=X_t')$.

\subsection{Lipschitz seminorms}
\label{subsec:lipschitz-rkhs}

We derive Lipschitz bounds for functions in the heat kernel reproducing kernel Hilbert space on $H$. In contrast with the finite rank case treated in \cite{fanning2025reproducingkernelhilbertspaces}, the proof combines the comparison between characters on $(K(X,A),\rho)$ and phase differences on $(X/A,\overline d_1)$ with the localization of characters under the heat kernel, expressed through the heat kernel invariants from Subsection~\ref{subsec:heat-invariants}.

\subsubsection{Characters and Lipschitz seminorms}

For each $x\in X\setminus A$, let $e_x\in K(X,A)$ denote the class of the one--point diagram at $x$, and set $e_{[A]}=0$. Given a character $\chi\in\widehat K(X,A)$, define the associated phase map
\[
 \phi_\chi\colon X/A \longrightarrow \mathbb R/2\pi\mathbb Z,
 \qquad
 \phi_\chi([A])=0,
 \quad
 \phi_\chi(x)=\arg(\chi(e_x)).
\]
Equip $\mathbb R/2\pi\mathbb Z$ with its geodesic distance $\operatorname{dist}\in[0,\pi]$, and define the Lipschitz seminorm
\[
 \mathrm{Lip}_{\overline d_1}(\phi_\chi)
 =
 \sup_{u\neq v}
 \frac{\operatorname{dist}(\phi_\chi(u),\phi_\chi(v))}
 {\overline d_1(u,v)}
 \in [0,\infty].
\]

\begin{lemma}
\label{lem:char-lip-comparison-uniform}
For every character $\chi\in\widehat K(X,A)$,
\[ \frac{2}{\pi}\,\mathrm{Lip}_{\overline d_1}(\phi_\chi) \;\le\; \mathrm{Lip}_\rho(\chi) \;\le\; \mathrm{Lip}_{\overline d_1}(\phi_\chi).
\]
\end{lemma}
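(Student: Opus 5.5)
The plan is to prove the two inequalities separately, reducing everything to the generators $e_x$ and to the behaviour of the map $s\mapsto e^{is}$ on the circle. Throughout we use the elementary chord–arc comparison: for $s\in\mathbb R/2\pi\mathbb Z$ with geodesic distance $\operatorname{dist}(s,0)\in[0,\pi]$,
\[
\frac{2}{\pi}\operatorname{dist}(s,0)\le |e^{is}-1| = 2\sin\!\bigl(\operatorname{dist}(s,0)/2\bigr)\le \operatorname{dist}(s,0).
\]
We also use that $\chi(e_x)=e^{i\phi_\chi(x)}$, so that for $u,v\in X/A$ we have $|\chi(e_u)-\chi(e_v)|=|e^{i(\phi_\chi(u)-\phi_\chi(v))}-1|$, with the convention $e_{[A]}=0$.

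\textbf{Lower bound.} Take $u\neq v$ in $X/A$ and compare $e_u$ with $e_v$ in $K(X,A)$. We need $\rho(e_u,e_v)\le \overline d_1(u,v)$. This holds because $\rho(e_u,e_v)=W_1(e_u,e_v)$, and the matching $u\mapsto v$ costs exactly $\overline d_1(u,v)$. When one of the points is the basepoint, this is the matching to the diagonal. Hence
\[
\mathrm{Lip}_\rho(\chi)\ge \frac{|\chi(e_u)-\chi(e_v)|}{\rho(e_u,e_v)}\ge \frac{2}{\pi}\frac{\operatorname{dist}(\phi_\chi(u),\phi_\chi(v))}{\overline d_1(u,v)},
\]
and taking the supremum over $u\neq v$ gives the left inequality.

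\textbf{Upper bound.} Set $L=\mathrm{Lip}_{\overline d_1}(\phi_\chi)$, which we may assume finite. By translation invariance of $\rho$ and multiplicativity of $\chi$, it suffices to show $|\chi(g)-1|\le L\,\rho(g,0)$ for every $g\in K(X,A)$. Write $g=\alpha-\beta$ with $\alpha,\beta\in D(X,A)$, so that $\rho(g,0)=W_1(\alpha,\beta)$. The key step is a discrete optimal matching: since the diagrams are finite and $W_1$ is an infimum over finitely many matchings, which includes matching points to $[A]$, there is a matching $\sigma$ pairing points $a_i$ of $\alpha$ with points $b_i$ of $\beta$, where unmatched points are paired with $[A]$, such that
\[
\sum_i \overline d_1(a_i,b_i)=W_1(\alpha,\beta).
\]
With this matching, $g=\sum_i (e_{a_i}-e_{b_i})$ in $K(X,A)$, using $e_{[A]}=0$. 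The identity $|\prod_i z_i-1|\le\sum_i|z_i-1|$ holds for unit complex numbers $z_i$ and follows by telescoping. Applying it gives
\[
|\chi(g)-1|\le\sum_i|\chi(e_{a_i})\overline{\chi(e_{b_i})}-1|\le\sum_i \operatorname{dist}(\phi_\chi(a_i),\phi_\chi(b_i))\le L\sum_i\overline d_1(a_i,b_i)=L\,\rho(g,0).
\]

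The main obstacle is justifying that $W_1$ on $D(X,A)$ is attained, or at least approximated, by such finite point matchings under the convention of \cite{bubenik2022virtualpd}. It also requires care that cancelling common mass or passing through the quotient $X/A$ does not increase the cost. If attainment is unclear, I would instead take an $\varepsilon$-optimal matching and let $\varepsilon\to0$; this suffices for the inequality. Note that the argument needs only that $\rho$ is the infimum of matching costs, not uniform discreteness, so the finite-rank proof of Lemma~\ref{lem:phase-vs-lip} transfers essentially verbatim.
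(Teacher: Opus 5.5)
Your proof is correct and follows essentially the same route as the paper. The upper bound decomposes $g=\sum_i(e_{a_i}-e_{b_i})$ along an optimal matching and bounds each pair by the chord--arc estimate, and the lower bound tests $\chi$ on $e_u-e_v$ and uses $2\sin(\delta/2)\ge(2/\pi)\delta$; your version only needs $\rho(e_u,e_v)\le\overline d_1(u,v)$ where the paper asserts equality, and it takes the supremum directly instead of using an $\varepsilon$-near-maximizer.
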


\begin{proof}
Translation invariance of $\rho$ gives
\[
 \mathrm{Lip}_\rho(\chi) = \sup_{\gamma\neq0} \frac{|\chi(\gamma)-1|}{\rho(\gamma,0)}.
\]

Assume $\mathrm{Lip}_{\overline d_1}(\phi_\chi)<\infty$ and fix $\gamma\in K(X,A)\setminus\{0\}$. Write $\gamma=\alpha-\beta$ with $\alpha,\beta\in D(X,A)$ finite diagrams, and let $\sigma$ be an optimal matching for $W_1(\alpha,\beta)$ with multiplicities $m_{x,y}\ge0$. Then
\begin{align*}
\rho(\gamma,0)
&= W_1(\alpha,\beta) \\
&= \sum_{x,y} m_{x,y}\,\overline d_1(x,y).
\end{align*}
For each matched pair $(x,y)$ choose $\delta_{x,y}\in[-\pi,\pi]$ such that $e^{i\delta_{x,y}}=e^{i(\phi_\chi(x)-\phi_\chi(y))}$ and $|\delta_{x,y}|=\operatorname{dist}(\phi_\chi(x),\phi_\chi(y))$. Then
\begin{align*}
|\chi(\gamma)-1|
&\le \sum_{x,y} m_{x,y}\,|\delta_{x,y}| \\
&\le \mathrm{Lip}_{\overline d_1}(\phi_\chi)\,\rho(\gamma,0).
\end{align*}

If $\mathrm{Lip}_{\overline d_1}(\phi_\chi)=0$ then $\chi$ is constant and the claim is trivial. Otherwise fix $\varepsilon>0$ and choose $u\neq v$ such that
\[ \frac{\operatorname{dist}(\phi_\chi(u),\phi_\chi(v))} {\overline d_1(u,v)} \ge \mathrm{Lip}_{\overline d_1}(\phi_\chi)-\varepsilon. \]
Set $\gamma=e_u-e_v$ and $\delta=\operatorname{dist}(\phi_\chi(u),\phi_\chi(v))\in(0,\pi]$. Then $\rho(\gamma,0)=\overline d_1(u,v)$ and
\begin{align*}
|\chi(\gamma)-1|
&= |e^{i\delta}-1| \\
&= 2\sin(\delta/2).
\end{align*}
Since $\sin t\ge (2/\pi)t$ for $t\in[0,\pi/2]$, this gives
\[
 |\chi(\gamma)-1|
 \ge
 \frac{2}{\pi}\,
 \bigl(\mathrm{Lip}_{\overline d_1}(\phi_\chi)-\varepsilon\bigr)\,
 \rho(\gamma,0),
\]
and letting $\varepsilon\downarrow0$ gives the result.
\end{proof}

\subsubsection{Spectral Lipschitz bound for heat kernel RKHS functions}

We combine Lemma~\ref{lem:char-lip-comparison-uniform} with Corollary~\ref{cor:heat-lip-spectral} and the heat kernel representation of $\mathcal H_t$ to obtain a Lipschitz bound on $H$ in terms of the heat kernel invariants from Subsection~\ref{subsec:heat-invariants}.

\begin{theorem}
\label{thm:lip-rkhs-invariants}
For every $t>0$ and every $f\in\mathcal H_t$,
\begin{align*}
 \mathrm{Lip}_\rho(f)
 &\le
 \|f\|_{\mathcal H_t}\,
 \bigl(-\tfrac{d}{dt}p_t(0)\bigr)^{1/2} \\
 &=
 \|f\|_{\mathcal H_t}\,
 \bigl(\mathbb E_{\mu_t}[\lambda_H]\bigr)^{1/2}\,p_t(0)^{1/2}.
\end{align*}
\end{theorem}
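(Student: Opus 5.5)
The plan is to work entirely on the spectral side, using Lemma~\ref{lem:spectral-Ht}. Fix $t>0$ and $f\in\mathcal H_t$, and write $f=J_t g$ with $g\in\mathcal W_t$, so that $\|f\|_{\mathcal H_t}^2=\int_{\widehat H}|g|^2e^{t\lambda_H}\,d\mu_{\widehat H}$. For $x,y\in H$, set $\gamma=x-y$. Then
\[
f(x)-f(y)=\int_{\widehat H} g(\theta)\,\overline{\theta(y)}\,\bigl(\overline{\theta(\gamma)}-1\bigr)\,d\mu_{\widehat H}(\theta).
\]
I will insert the factor $e^{t\lambda_H/2}e^{-t\lambda_H/2}$ and apply Cauchy--Schwarz, which gives
\[
|f(x)-f(y)|^2\le \|f\|_{\mathcal H_t}^2\int_{\widehat H}|\theta(\gamma)-1|^2\,e^{-t\lambda_H(\theta)}\,d\mu_{\widehat H}(\theta).
\]
So the whole theorem reduces to a pointwise comparison of $|\theta(\gamma)-1|^2$ with $\lambda_H(\theta)\,\rho(\gamma,0)^2$.

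For that comparison, I first use the upper half of Lemma~\ref{lem:char-lip-comparison-uniform}, applied to an extension $\tilde\theta\in\widehat{K(X,A)}$ of $\theta$. Since $\rho$ is translation invariant, this gives
\[
|\theta(\gamma)-1|\le \mathrm{Lip}_\rho(\theta)\,\rho(\gamma,0)\le \mathrm{Lip}_{\overline d_1}(\phi_{\tilde\theta})\,\rho(\gamma,0).
\]
The remaining ingredient is an inequality $\mathrm{Lip}_\rho(\theta)^2\le\lambda_H(\theta)$, which plays the role of the lower bound in Lemma~\ref{lem:lambda-vs-L} in the infinite-rank setting. I would try to obtain it from the Lévy--Khintchine form \eqref{eq:lk-series}, together with the fact that $\nu$ charges the elementary pair-jumps $e_x-e_y$. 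For each such jump,
\[
1-\Re\,\theta(e_x-e_y)=2\sin^2\bigl(\operatorname{dist}(\phi(x),\phi(y))/2\bigr)\ge \tfrac{2}{\pi^2}\operatorname{dist}(\phi(x),\phi(y))^2,
\]
and I would then combine this with the $\overline d_1$-lengths of these jumps to control the phase Lipschitz constant. This is the step I expect to be the main obstacle. The inequality holds cleanly only with constants depending on the weights and lengths, as in Lemma~\ref{lem:lambda-vs-L}, and only when the support of $\nu$ connects the relevant points. So I expect either to absorb these constants into the normalization of $\psi$, or to have to carry an explicit constant.

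Granting the comparison, the Cauchy--Schwarz bound becomes
\[
|f(x)-f(y)|^2\le\|f\|_{\mathcal H_t}^2\,\rho(x,y)^2\int_{\widehat H}\lambda_H e^{-t\lambda_H}\,d\mu_{\widehat H}.
\]
The integral on the right equals $-\tfrac{d}{dt}p_t(0)$ by the return-probability identity of Subsection~\ref{subsec:heat-invariants}. Dividing by $\rho(x,y)$ and taking the supremum over $x\neq y$ gives the first line of the theorem. The second line is immediate from the definition of $\mu_t$, because
\[
\mathbb E_{\mu_t}[\lambda_H]\,p_t(0)=\int_{\widehat H}\lambda_H e^{-t\lambda_H}\,d\mu_{\widehat H},
\]
using $p_t(0)=\int_{\widehat H}e^{-t\lambda_H}\,d\mu_{\widehat H}$.
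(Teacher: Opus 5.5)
Your proof follows the same route as the paper. Your first step is correct. Writing $f=J_tg$ via Lemma~\ref{lem:spectral-Ht} and applying Cauchy--Schwarz gives $|f(x)-f(y)|^2\le\|f\|_{\mathcal H_t}^2\int_{\widehat H}|\theta(x-y)-1|^2e^{-t\lambda_H(\theta)}\,d\mu_{\widehat H}(\theta)$. This is the correct justification of the first display in the paper's proof. The paper instead cites the finite-rank Corollary~\ref{cor:heat-lip-spectral}, whose factor $\pi/(d_{\min}\sqrt{2w_{\min}})$ is then dropped. The paper next asserts $\mathrm{Lip}_\rho(\theta)^2\le\lambda_H(\theta)$ ``from the definition of $\lambda_H$'', which is exactly the step you grant.

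That step is the gap, and it cannot be closed: it is false, and so is the theorem with constant $1$. Replacing $\psi$ by $c\psi$ multiplies $\nu$ and $\lambda_H$ by $c$ but changes neither $H=\langle\operatorname{supp}\nu\rangle$ nor $\rho$. So $|\theta(\gamma)-1|^2\le\lambda_H(\theta)\rho(\gamma,0)^2$ cannot hold for every $c>0$ unless $\theta$ is trivial on $H$.

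Concretely, let $X\setminus A=\{x,y\}$ with $\overline d_1(x,y)=D$ and $\psi(D)=2\epsilon$. Then $\nu(\pm(e_x-e_y))=\epsilon$, $H=\mathbb Z(e_x-e_y)$, $\rho(e_x-e_y,0)=D$, and $\lambda_H(\vartheta)=2\epsilon(1-\cos\vartheta)$ on $\widehat H\cong\mathbb T$. At $\vartheta=\pi$ and $\gamma=e_x-e_y$ the comparison reads $4\le 4\epsilon D^2$, which fails for small $\epsilon$. For the theorem itself take $f=k_t(\cdot,0)$. Then $\|f\|_{\mathcal H_t}^2=p_t(0)\le1$ and $-\tfrac{d}{dt}p_t(0)\le\int_{\widehat H}\lambda_H\,d\mu_{\widehat H}=2\epsilon$. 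Meanwhile $\mathrm{Lip}_\rho(f)\ge\bigl(p_t(0)-p_t(e_x-e_y)\bigr)/D\to 1/D$ as $\epsilon\downarrow0$ with $t$ fixed. For an infinite-rank version, add infinitely many points at distance $2D$ from every other point, including $[A]$, and set $\psi(2D)=0$.

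Your proposed repairs do not rescue the step either. Connectivity is not the issue, since $\gamma$ only ranges over $H=\langle\operatorname{supp}\nu\rangle$; the problem is quantitative. Your single-jump estimate gives $\operatorname{dist}(\phi(x),\phi(y))^2\le\pi^2\lambda_H(\theta)/(4\nu(e_x-e_y))$. So the constant this route produces is $\frac{\pi^2}{4}\sup_{\kappa\in\operatorname{supp}\nu}\bigl(\nu(\kappa)\rho(\kappa,0)^2\bigr)^{-1}$, the analogue of $1/(w_{\min}d_{\min}^2)$ in Lemma~\ref{lem:lambda-vs-L}. Since $\nu\in\ell^1$ forces $\nu(\kappa)\to0$ along an infinite support, this is $+\infty$ unless the jump lengths grow at least like $\nu(\kappa)^{-1/2}$. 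No single normalization of $\psi$ fixes that.

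Moreover, $\mathrm{Lip}_{\overline d_1}(\phi_{\tilde\theta})$ also involves pairs with $\psi(\overline d_1(u,v))=0$ and the basepoint, about which $\lambda_H$ says nothing. Every element of $H$ has coefficient sum $0$, so $\theta\in\widehat H$ does not even determine the phases $\tilde\theta(e_x)$.

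What your Cauchy--Schwarz step does prove, sharply, is
\[
\mathrm{Lip}_\rho(f)\le\|f\|_{\mathcal H_t}\,\sup_{\gamma\in H\setminus\{0\}}\frac{\bigl(2(p_t(0)-p_t(\gamma))\bigr)^{1/2}}{\rho(\gamma,0)},
\]
and this supremum equals $\sup_{\|f\|_{\mathcal H_t}\le1}\mathrm{Lip}_\rho(f)$ (test with $k_t(\cdot,\gamma)-k_t(\cdot,0)$). To reach $\bigl(-\tfrac{d}{dt}p_t(0)\bigr)^{1/2}$ you must assume $|\theta(\gamma)-1|^2\le C\,\lambda_H(\theta)\,\rho(\gamma,0)^2$ on $\widehat H\times H$, and the conclusion then carries a factor $C^{1/2}$. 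Your derivation of the second equality via $\mu_t$ is fine.
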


\begin{proof}
Fix $\gamma\in H$. By definition of $\mathrm{Lip}_\rho$ and translation invariance of $\rho$,
\[
 \mathrm{Lip}_\rho(f)
 =
 \sup_{\gamma\neq0}
 \frac{|f(\gamma)-f(0)|}{\rho(\gamma,0)}.
\]

Let $F\subset X\setminus A$ be a finite subset such that $\gamma\in\bigoplus_{x\in F}\mathbb Z e_x$. By Lemma~\ref{lem:projection}, the restriction of $(P_t)$ to $\ell^2\bigl(\bigoplus_{x\in F}\mathbb Z e_x\bigr)$ coincides with the finite--rank semigroup constructed in \cite{fanning2025reproducingkernelhilbertspaces}, with graph metric induced directly by $\overline d_1$.

Restricting the reproducing kernel to this finite subgroup and applying the finite--rank spectral Lipschitz estimate \cite[Corollary~2]{fanning2025reproducingkernelhilbertspaces} gives
\[
 |f(\gamma)-f(0)|
 \le
 \|f\|_{\mathcal H_t}
 \left(
 \int_{\widehat H}
 |\theta(\gamma)-1|^2\,e^{-t\lambda_H(\theta)}\,
 d\mu_{\widehat H}(\theta)
 \right)^{1/2}.
\]
Using Lemma~\ref{lem:char-lip-comparison-uniform} and the definition of $\lambda_H(\theta)$ gives
\[
 |\theta(\gamma)-1|
 \le
 \mathrm{Lip}_\rho(\theta)\,\rho(\gamma,0),
 \qquad
 \mathrm{Lip}_\rho(\theta)^2
 \le
 \lambda_H(\theta),
\]
and hence
\[
 |f(\gamma)-f(0)|
 \le
 \|f\|_{\mathcal H_t}\,
 \rho(\gamma,0)
 \left(
 \int_{\widehat H}
 \lambda_H(\theta)\,e^{-t\lambda_H(\theta)}\,
 d\mu_{\widehat H}(\theta)
 \right)^{1/2}.
\]
Dividing by $\rho(\gamma,0)$ and taking the supremum over $\gamma\neq0$ yields the claim. The alternative expressions follow from Subsection~\ref{subsec:heat-invariants}.
\end{proof}

\subsection{A Sobolev-type inequality with resolvent constant}
\label{subsec:sobolev-resolvent}

Subsection~\ref{subsec:heat-invariants} introduced the resolvent diagonal $G_s(0,0)$ associated with the generator $L$ of the random walk. We show that this quantity controls the sharp conversion of $\ell^2$ mass and Dirichlet energy into pointwise control.

\begin{theorem}
\label{thm:sobolev-resolvent}
For every $s>0$ and every $f\in\ell^2(H)$ with finite Dirichlet energy $\mathcal E_H(f,f)$,
\[
 \|f\|_\infty^2
 \le
 G_s(0,0)\,
 \bigl(s\,\|f\|_2^2+\mathcal E_H(f,f)\bigr).
\]
The constant $G_s(0,0)$ is optimal.
\end{theorem}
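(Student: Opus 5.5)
The proof works entirely on the Fourier side, using Plancherel on the countable discrete group $H$ with compact dual $\widehat H$ and normalized Haar measure $\mu_{\widehat H}$. For $f\in\ell^2(H)$ with $\mathcal E_H(f,f)<\infty$, the quantity on the right is
\[
s\|f\|_2^2+\mathcal E_H(f,f)=\int_{\widehat H}\bigl(s+\lambda_H(\theta)\bigr)|\widehat f(\theta)|^2\,d\mu_{\widehat H}(\theta).
\]
By the spectral representation of the resolvent diagonal in Subsection~\ref{subsec:heat-invariants},
\[
G_s(0,0)=\int_{\widehat H}\frac{d\mu_{\widehat H}(\theta)}{s+\lambda_H(\theta)}.
\]
Since $\lambda_H$ is bounded, as noted before Lemma~\ref{lem:trunc-norm}, the function $\widehat f$ lies in $L^1(\widehat H)$. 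So Fourier inversion holds pointwise:
\[
f(\gamma)=\int_{\widehat H}\widehat f(\theta)\,\theta(\gamma)\,d\mu_{\widehat H}(\theta)
\]
for every $\gamma\in H$, with the sign convention matching $\widehat f(\theta)=\sum f(h)\overline{\theta(h)}$.

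\textbf{Upper bound.} Fix $\gamma\in H$ and write the integrand as
\[
\widehat f(\theta)\theta(\gamma)=\bigl[(s+\lambda_H)^{1/2}\widehat f\bigr]\cdot\bigl[(s+\lambda_H)^{-1/2}\theta(\gamma)\bigr].
\]
Cauchy--Schwarz then gives
\[
|f(\gamma)|^2\le\int_{\widehat H}\frac{|\theta(\gamma)|^2}{s+\lambda_H}\,d\mu_{\widehat H}\int_{\widehat H}(s+\lambda_H)|\widehat f|^2\,d\mu_{\widehat H}.
\]
Since $|\theta(\gamma)|=1$, the first factor equals $G_s(0,0)$, and the second equals $s\|f\|_2^2+\mathcal E_H(f,f)$. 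Taking the supremum over $\gamma$ gives the inequality. This mirrors the RKHS picture: the space $\mathcal D(\mathcal E_H)$ with the norm $s\|f\|_2^2+\mathcal E_H(f,f)$ is a reproducing kernel Hilbert space with kernel $G_s(x,y)=\langle\delta_x,R_s\delta_y\rangle$, and evaluation at $\gamma$ has squared norm $G_s(\gamma,\gamma)=G_s(0,0)$ by translation invariance.

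\textbf{Optimality.} Equality in Cauchy--Schwarz at $\gamma=0$ calls for $\widehat f=(s+\lambda_H)^{-1}$. Take $f=R_s\delta_0$, so $f(\gamma)=G_s(\gamma,0)$. Because $(s+\lambda_H)^{-1}$ is bounded by $1/s$, this $f$ lies in $\ell^2(H)$, and its energy is finite since $\lambda_H(s+\lambda_H)^{-2}$ is bounded. Then
\[
s\|f\|_2^2+\mathcal E_H(f,f)=\int_{\widehat H}\frac{d\mu_{\widehat H}}{s+\lambda_H}=G_s(0,0),
\]
and $f(0)=G_s(0,0)$. Hence $\|f\|_\infty^2\ge G_s(0,0)^2=G_s(0,0)\bigl(s\|f\|_2^2+\mathcal E_H(f,f)\bigr)$, so no smaller constant works. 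By the upper bound, $\|f\|_\infty$ is in fact attained at $0$.

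\textbf{Main point needing care.} The remaining work is checking conventions: the identification of $\mathcal E_H$ with the Fourier-side form under the paper's normalization, the sign convention in the inversion formula, and the fact that $G_s(0,0)$ defined via $\langle\delta_0,R_s\delta_0\rangle$ coincides with the spectral integral. The last follows because $L$ acts as multiplication by $\lambda_H$ under the unitary Fourier transform $\ell^2(H)\to L^2(\widehat H)$ (the generator computation in Theorem~\ref{thm:lk-H}) and $\widehat{\delta_0}\equiv1$. None of these steps should be a real obstacle once the conventions are fixed.
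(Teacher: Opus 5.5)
Your proof is correct and follows the paper's argument: the same weighted Cauchy--Schwarz with weight $s+\lambda_H$ on the Fourier side gives the bound, and your explicit extremizer $f=R_s\delta_0$, with $\widehat f=(s+\lambda_H)^{-1}$, is exactly the Riesz representer of evaluation at $0$ that the paper invokes abstractly in $\mathcal H_s$, so your version makes the equality case explicit. One small slip: pointwise inversion does not need boundedness of $\lambda_H$, only Plancherel, since $\widehat f\in L^2(\widehat H,\mu_{\widehat H})\subset L^1(\widehat H,\mu_{\widehat H})$ on a probability space and hence $f(\gamma)=\langle \widehat f,\widehat{\delta_\gamma}\rangle$.
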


\begin{proof}
Fix $s>0$ and $f\in\ell^2(H)$ with $\mathcal E_H(f,f)<\infty$. For each $x\in H$, Fourier inversion gives
\[
 f(x)
 =
 \int_{\widehat H}\widehat f(\theta)\,\theta(x)\,
 d\mu_{\widehat H}(\theta).
\]
Applying the Cauchy--Schwarz inequality with weight $s+\lambda_H(\theta)$ gives
\begin{align*}
 |f(x)|^2
 &\le
 \Biggl(
 \int_{\widehat H}
 (s+\lambda_H(\theta))\,|\widehat f(\theta)|^2\,
 d\mu_{\widehat H}(\theta)
 \Biggr)
 \Biggl(
 \int_{\widehat H}
 \frac{1}{s+\lambda_H(\theta)}\,
 d\mu_{\widehat H}(\theta)
 \Biggr).
\end{align*}
By the definition of the Dirichlet form $\mathcal E_H$ in terms of $\lambda_H$, the first factor equals $s\,\|f\|_2^2+\mathcal E_H(f,f)$, and by the spectral representation of $G_s(0,0)$ in Subsection~\ref{subsec:heat-invariants}, the second factor equals $G_s(0,0)$. Hence
\[
 |f(x)|^2
 \le
 G_s(0,0)\,
 \bigl(s\,\|f\|_2^2+\mathcal E_H(f,f)\bigr)
\]
for every $x\in H$. Taking the supremum over $x$ yields the inequality.

For optimality, equip $\mathcal D(\mathcal E_H)$ with the norm
\[
 \|f\|_s^2
 =
 s\,\|f\|_2^2+\mathcal E_H(f,f),
\]
and let $\mathcal H_s$ denote the Hilbert space completion. Evaluation at the origin defines a bounded linear functional on $\mathcal H_s$, whose squared operator norm is
\[
 \sup_{f\neq0}\frac{|f(0)|^2}{\|f\|_s^2}.
\]
By the Riesz representation theorem and the definition of the resolvent, this quantity equals $G_s(0,0)$. Therefore the constant in the inequality cannot be improved.
\end{proof}

\subsection{RKHS interpretation of the random--walk invariants}
\label{subsec:rkhs-interpretation}

The random--walk invariants introduced in Subsection~\ref{subsec:heat-invariants} have canonical representations as squared operator norms of linear maps determined by the RKHS and Dirichlet structures constructed earlier in the paper.

In the heat kernel RKHS $\mathcal H_t$ (Section~\ref{sec:heat-rkhs}), evaluation at the identity defines a bounded linear functional $\mathrm{ev}_0\colon\mathcal H_t\to\mathbb R$. By the reproducing property, its operator norm satisfies
\begin{align*}
\|\mathrm{ev}_0\|_{\mathcal H_t^*}^2
 &= k_t(0,0) \\
 &= p_t(0).
\end{align*}
so the return probability is exactly the squared norm of evaluation in $\mathcal H_t$.

The heat semigroup $P_t$ acts as a bounded convolution operator on $\ell^2(H)$. Its $\ell^2(H)\to\ell^\infty(H)$ operator norm is characterized by the usual extremal property defining operator norms, and one has
\begin{align*}
\|P_t\|_{\ell^2(H)\to\ell^\infty(H)}^2
 &= \|p_t\|_{\ell^2(H)}^2 \\
 &= \mathbb{P}(X_t = X_t').
\end{align*}
identifying the collision probability with the squared smoothing norm of $P_t$.

Let $L$ denote the generator of $(P_t)_{t\ge0}$.
\begin{align*}
-\frac{d}{dt} p_t(0) &= \langle \delta_0,\, L e^{-tL} \delta_0 \rangle_{\ell^2(H)} \\
&= \bigl\| L^{1/2} e^{-tL/2} \delta_0 \bigr\|_{\ell^2(H)}^2.
\end{align*}
Accordingly, $-p_t'(0)$ is the Dirichlet energy of the canonical kernel section $e^{-tL/2}\delta_0$, and is precisely the scalar controlling the Lipschitz estimate in Theorem~\ref{thm:lip-rkhs-invariants}.

In the Sobolev space $\mathcal H_s$ associated with the Dirichlet form $\mathcal E_H$ (Subsection~\ref{subsec:sobolev-resolvent}), evaluation at the identity is a bounded linear functional whose norm is fixed by the reproducing property of the resolvent kernel. One has
\[
 \|\mathrm{ev}_0\|_{\mathcal H_s^*}^2 = G_s(0,0),
\]
so the diagonal resolvent value is the squared norm of evaluation in $\mathcal H_s$.

\[
\begin{array}{c|c|c}
\text{Invariant}
&
\text{Spectral form}
&
\text{Operator norm form}
\\
\hline
p_t(0)
&
\displaystyle\int_{\widehat H} e^{-t\lambda_H}\,d\mu_{\widehat H}
&
\|\mathrm{ev}_0\|_{\mathcal H_t^*}^2
\\[1ex]
-\,p_t'(0)
&
\displaystyle\int_{\widehat H} \lambda_H e^{-t\lambda_H}\,d\mu_{\widehat H}
&
\bigl\|L^{1/2}e^{-tL/2}\delta_0\bigr\|_{\ell^2(H)}^2
\\[1ex]
\|p_t\|_2^2
&
\displaystyle\int_{\widehat H} e^{-2t\lambda_H}\,d\mu_{\widehat H}
&
\|P_t\|_{\ell^2(H)\to\ell^\infty(H)}^2
\\[1ex]
G_s(0,0)
&
\displaystyle\int_{\widehat H} \frac{1}{s+\lambda_H}\,d\mu_{\widehat H}
&
\|\mathrm{ev}_0\|_{\mathcal H_s^*}^2
\end{array}
\]

In each case, the invariant is uniquely characterized as the squared norm of a canonical linear map determined by the universal extremal property defining the corresponding Hilbert--space structure.

\subsection{Metric truncation}
\label{subsec:truncation-mc}

Throughout this subsection we work with the metric truncations $\nu_R$ of the L\'evy measure $\nu$ and with the associated L\'evy--Khintchine exponents $\lambda_{H,R}$ constructed in Section~\ref{sec:heat-rkhs}. Since $\nu_R$ is supported on a finite $\rho$--ball in the discrete group $H$, it has finite total mass. The corresponding symmetric convolution semigroup is therefore a finite--activity L\'evy process, which has an exact compound Poisson representation by Corollary~\ref{cor:finite-activity-compound-poisson}. We denote the resulting convolution semigroup by $(p_{t,R})_{t\ge0}$.

The truncated spectral quantities are obtained from the formulas of Subsection~\ref{subsec:heat-invariants} by replacing $\lambda_H$ with $\lambda_{H,R}$. For $t>0$ and $s>0$,
\[
 p_{t,R}(0)
 =
 \int_{\widehat H} e^{-t\lambda_{H,R}(\theta)}\,d\mu_{\widehat H}(\theta),
\]
\[
 -\frac{d}{dt}p_{t,R}(0)
 =
 \int_{\widehat H}
 \lambda_{H,R}(\theta)\,e^{-t\lambda_{H,R}(\theta)}\,
 d\mu_{\widehat H}(\theta),
\]
\[
 \|p_{t,R}\|_{\ell^2(H)}^2
 =
 \int_{\widehat H} e^{-2t\lambda_{H,R}(\theta)}\,d\mu_{\widehat H}(\theta),
\]
and
\[
 G_{s,R}(0,0)
 =
 \int_{\widehat H} \frac{1}{s+\lambda_{H,R}(\theta)}\,d\mu_{\widehat H}(\theta).
\]

By Lemma~\ref{lem:lambdaHR-monotone}, $\lambda_{H,R}(\theta)\uparrow\lambda_H(\theta)$ for every $\theta\in\widehat H$. The inequalities
\begin{align*}
 0 &\le e^{-t\lambda_{H,R}(\theta)} \le 1,
 &\qquad
 0 &\le e^{-2t\lambda_{H,R}(\theta)} \le 1,
 \label{eq:trunc-bounds-exp} \\
 \lambda e^{-t\lambda} &\le (et)^{-1},
 &\qquad
 0 &\le \frac{1}{s+\lambda_{H,R}(\theta)} \le \frac{1}{s},
\end{align*}
which hold for all $\theta\in\widehat H$ and all $\lambda\ge0$, imply by monotone and dominated convergence that, for fixed $t>0$ and $s>0$,
\[
 p_{t,R}(0)\downarrow p_t(0),
 \qquad
 \|p_{t,R}\|_{\ell^2(H)}^2\downarrow\|p_t\|_{\ell^2(H)}^2,
\]
\[
 -\frac{d}{dt}p_{t,R}(0)\longrightarrow -\frac{d}{dt}p_t(0),
 \qquad
 G_{s,R}(0,0)\longrightarrow G_s(0,0)
\]
as $R\to\infty$.

Since $\lambda_{H,R}\le\lambda_H$, each truncated quantity dominates its infinite--rank counterpart for fixed $R$. Substituting the truncated expressions into Theorems~\ref{thm:lip-rkhs-invariants} and~\ref{thm:sobolev-resolvent} therefore gives valid upper bounds on the corresponding Lipschitz and resolvent constants. These bounds are monotone in $R$ and converge to the sharp constants as the truncation radius increases. Metric truncation thus giving a controlled numerical method with which the constants controlling global regularity on the virtual persistence diagram group can be approximated from above.

\subsection{Heat--scale majorization}

The preceding subsections derived spectral quantities attached to the heat semigroup on $H$, including the return probability, collision probability, heat-kernel energy, and diagonal resolvent value in Subsection~\ref{subsec:heat-invariants}, the Lipschitz and Sobolev bounds in Theorems~\ref{thm:lip-rkhs-invariants} and~\ref{thm:sobolev-resolvent}, and the truncation scheme in Subsection~\ref{subsec:truncation-mc}. In each case, the relevant quantity is given by a spectral integral against a function of $\lambda_H$. We now pass from the pure heat weight $e^{-t\lambda_H}$ to the mixture $m_\eta(\lambda_H)$ and prove the main result of this subsection, Theorem~\ref{thm:heat-scale-majorization}. 

That theorem shows that convex ordering of heat scales induces a corresponding ordering of the associated kernels, reproducing kernel Hilbert spaces, semimetrics, and scalar invariants.

Let $\eta$ be a finite positive Borel measure on $[0,\infty)$. Define $m_\eta(\lambda)=\int_{[0,\infty)} e^{-u\lambda}\,d\eta(u)$ for $\lambda\ge 0$.

\begin{lemma}\label{lem:convexity-main}
For each $\lambda\ge 0$, the functions $u\mapsto e^{-u\lambda}$ and $u\mapsto \lambda e^{-u\lambda}$ are convex on $[0,\infty)$.
\end{lemma}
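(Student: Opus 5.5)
The claim is elementary, so the plan is to fix $\lambda\ge0$ and verify convexity of each function on $[0,\infty)$ directly, via the second-derivative criterion. Both functions are $C^\infty$ in $u$. Convexity on an interval follows once the second derivative is nonnegative there.

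For $f(u)=e^{-u\lambda}$ we compute $f'(u)=-\lambda e^{-u\lambda}$ and $f''(u)=\lambda^2e^{-u\lambda}$. The second derivative is nonnegative for all $u\ge0$, since $\lambda^2\ge0$ and the exponential is positive. For $g(u)=\lambda e^{-u\lambda}=\lambda f(u)$ we have $g''(u)=\lambda^3e^{-u\lambda}$, which is nonnegative because $\lambda\ge0$. Alternatively, $g$ is a nonnegative scalar multiple of the convex function $f$, and such multiples remain convex. The degenerate case $\lambda=0$ makes $f\equiv1$ and $g\equiv0$, which are constant and hence convex; it needs no separate treatment, since the derivative formulas already give $f''=g''=0$.

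If one prefers to avoid calculus, convexity of $f$ can instead be obtained by writing $f=\exp\circ\ell$ with $\ell(u)=-\lambda u$ affine. The composition of a convex nondecreasing function with an affine map is convex. Then $g=\lambda f$ is convex as above.

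There is no real obstacle here. The only point worth stating explicitly is the sign condition $\lambda\ge0$: it ensures the factor $\lambda$ in $g$ does not reverse convexity. This matters for the later application, where $\lambda=\lambda_H(\theta)\ge0$ and convex order $\eta_1\preceq_{\mathrm{cx}}\eta_2$ will be tested against these two functions. The lemma will then yield $m_{\eta_1}(\lambda)\le m_{\eta_2}(\lambda)$ and $\lambda m_{\eta_1}(\lambda)\le\lambda m_{\eta_2}(\lambda)$ pointwise.
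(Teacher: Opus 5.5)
Your proposal is correct and follows the same route as the paper, which likewise verifies convexity by direct differentiation, obtaining $\lambda^2 e^{-u\lambda}\ge 0$ and $\lambda^3 e^{-u\lambda}\ge 0$, the latter using $\lambda\ge 0$. One minor remark on your calculus-free alternative: precomposing any convex function with an affine map already preserves convexity, so the ``nondecreasing'' hypothesis on $\exp$ is not needed there.
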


\begin{proof}
Fix $\lambda\ge 0$. Direct differentiation gives $\frac{d^2}{du^2}e^{-u\lambda}=\lambda^2 e^{-u\lambda}\ge 0$ and $\frac{d^2}{du^2}(\lambda e^{-u\lambda})=\lambda^3 e^{-u\lambda}\ge 0.$
\end{proof}

Lemma~\ref{lem:convexity-main} gives the convex test functions needed to compare the mixtures under convex order.

\begin{lemma}\label{lem:multiplier-order-main}
Let $\eta_1$ and $\eta_2$ be finite positive Borel measures on $[0,\infty)$ with finite first moments. If $\eta_1 \preceq_{\mathrm{cx}} \eta_2$, then $m_{\eta_1}(\lambda)\le m_{\eta_2}(\lambda)$ and $\lambda m_{\eta_1}(\lambda)\le \lambda m_{\eta_2}(\lambda)$ for all $\lambda\ge 0$.
\end{lemma}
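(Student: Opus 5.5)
The plan is to apply the defining property of convex order directly, with the test functions supplied by Lemma~\ref{lem:convexity-main}. Recall that $\eta_1\preceq_{\mathrm{cx}}\eta_2$ means $\int\varphi\,d\eta_1\le\int\varphi\,d\eta_2$ for every convex $\varphi\colon[0,\infty)\to\mathbb R$ for which both integrals exist. In particular, taking $\varphi\equiv\pm1$ and $\varphi(u)=\pm u$ shows that the two measures have equal total mass and equal first moments. The finite first moment hypothesis is what guarantees that every convex function of at most linear growth is integrable against both measures.

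Fix $\lambda\ge0$ and set $\varphi_\lambda(u)=e^{-u\lambda}$. By Lemma~\ref{lem:convexity-main}, $\varphi_\lambda$ is convex on $[0,\infty)$, and it is bounded by $1$. Since $\eta_1,\eta_2$ are finite, both integrals $\int\varphi_\lambda\,d\eta_i$ are finite. The convex order inequality therefore applies and gives
\[
m_{\eta_1}(\lambda)=\int_{[0,\infty)}e^{-u\lambda}\,d\eta_1(u)\le\int_{[0,\infty)}e^{-u\lambda}\,d\eta_2(u)=m_{\eta_2}(\lambda).
\]

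For the second inequality, I will use the function $u\mapsto\lambda e^{-u\lambda}$. It is convex by Lemma~\ref{lem:convexity-main} and bounded by $\lambda$. Applying the same argument gives $\lambda m_{\eta_1}(\lambda)\le\lambda m_{\eta_2}(\lambda)$. Alternatively, this follows from the first inequality simply by multiplying through by $\lambda\ge0$.

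The only point needing care is which convention for $\preceq_{\mathrm{cx}}$ is in force. If the definition is stated only for probability measures, or only for convex functions that are Lipschitz or of linear growth, I will check that the bounded functions $\varphi_\lambda$ and $\lambda\varphi_\lambda$ are admissible. They are: both are bounded and Lipschitz on $[0,\infty)$, with Lipschitz constants $\lambda$ and $\lambda^2$ respectively. If the definition requires normalized measures, I will first divide both measures by their common total mass and then rescale the resulting inequality. Beyond this bookkeeping I expect no real obstacle.
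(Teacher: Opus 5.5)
Your proposal is correct and matches the paper's proof: fix $\lambda\ge0$ and apply the convex-order inequality to the convex test functions $u\mapsto e^{-u\lambda}$ and $u\mapsto\lambda e^{-u\lambda}$ from Lemma~\ref{lem:convexity-main}. Your extra remarks are accurate bookkeeping that the paper omits: the test functions are bounded and Lipschitz, hence admissible under any standard convention, and the second inequality already follows from the first by multiplying by $\lambda\ge0$.
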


\begin{proof}
Fix $\lambda\ge 0$. One has $m_{\eta_i}(\lambda)=\int e^{-u\lambda}\,d\eta_i(u)$ and $\lambda m_{\eta_i}(\lambda)=\int \lambda e^{-u\lambda}\,d\eta_i(u)$ for $i=1,2$. By Lemma~\ref{lem:convexity-main}, both integrands are convex, and the convex-order assumption gives the result.
\end{proof}

For such a measure $\eta$, we use the spectral weight $m_\eta(\lambda_H)$ to define a translation-invariant kernel on $H$ by
\begin{equation*}
K_\eta(g,h) = \int_{\widehat H} \theta(h-g)\,m_\eta(\lambda_H(\theta))\,d\mu_{\widehat H}(\theta)
\end{equation*}
for $g,h\in H$.

\begin{lemma}\label{lem:kernel-pd-main}
For every finite positive Borel measure $\eta$ on $[0,\infty)$, the kernel $K_\eta$ is positive definite on $H\times H$.
\end{lemma}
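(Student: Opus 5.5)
The plan is to verify positive definiteness directly from the defining integral, using that $m_\eta(\lambda_H)$ is a nonnegative bounded measurable weight on the compact group $\widehat H$.

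\emph{Step 1: the weight is well defined and nonnegative.} Since $\lambda_H\ge0$ and $e^{-u\lambda}\le1$ for $u,\lambda\ge0$, we have
\[
0\le m_\eta(\lambda_H(\theta))\le \eta([0,\infty))<\infty
\]
for every $\theta\in\widehat H$. The function $\lambda_H$ is continuous; this follows from the uniform convergence of the series in Theorem~\ref{thm:lk-H}, since $\nu\in\ell^1$. Dominated convergence in $u$ then shows that $\theta\mapsto m_\eta(\lambda_H(\theta))$ is continuous, hence bounded and Borel. Consequently the integral defining $K_\eta(g,h)$ converges absolutely against the normalized Haar measure $\mu_{\widehat H}$.

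\emph{Step 2: the quadratic form.} Take finitely many $g_1,\dots,g_n\in H$ and $c_1,\dots,c_n\in\mathbb C$. Because the sum is finite, it may be exchanged with the integral. Using $\theta(g_j-g_i)=\theta(g_j)\overline{\theta(g_i)}$, we obtain
\[
\sum_{i,j}c_i\overline{c_j}\,K_\eta(g_i,g_j)
=\int_{\widehat H}\Bigl|\sum_i c_i\overline{\theta(g_i)}\Bigr|^2 m_\eta(\lambda_H(\theta))\,d\mu_{\widehat H}(\theta)\ge0 .
\]
The sign convention depends on whether positive definiteness is stated with $c_i\overline{c_j}$ or $\overline{c_i}c_j$. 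In the other convention the modulus squared becomes $|\sum_i \overline{c_i}\theta(g_i)|^2$, and the conclusion is the same. This gives positive semidefiniteness, which is the paper's sense of ``positive definite,'' as used for $k_t$.

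\emph{Step 3: Hermitian symmetry.} We have $K_\eta(h,g)=\overline{K_\eta(g,h)}$ because the weight is real and $\theta(g-h)=\overline{\theta(h-g)}$. In fact $K_\eta$ is real and symmetric. To see this, note that $\lambda_H(\overline\theta)=\lambda_H(\theta)$ by the $\Re$ in \eqref{eq:lk-series}, and inversion $\theta\mapsto\overline\theta$ preserves Haar measure. This part is only needed for comparison with $k_t$.

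The only step needing care is Step 1, namely measurability and integrability of $m_\eta\circ\lambda_H$. It is handled by the continuity and boundedness of $\lambda_H$ together with the finiteness of $\eta$.

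An alternative route is to write $K_\eta(g,h)=\int_{[0,\infty)} p_u^\psi(h-g)\,d\eta(u)$ by Fubini and the Fourier inversion for $p_u^\psi$. Then $K_\eta$ is a positive mixture of the positive definite kernels $k_u$, with the convention $k_0=\delta$. I would mention this as a remark, but I would use the direct argument above.
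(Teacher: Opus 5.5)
Your proposal is correct and is essentially the paper's argument: expand the finite quadratic form under the integral, use $\theta(g_j-g_i)=\theta(g_j)\overline{\theta(g_i)}$ to obtain a modulus squared, and conclude from the nonnegativity of the weight $m_\eta(\lambda_H(\theta))$. Your Step 1 adds something the paper leaves implicit, namely that $m_\eta\circ\lambda_H$ is a bounded continuous weight, so the defining integral converges absolutely.
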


\begin{proof}
Let $(c_j)\subset\mathbb C$ and $(g_j)\subset H$ be finite. Then
\[
\sum_{i,j} \overline{c_i}c_j K_\eta(g_i,g_j)
=
\int_{\widehat H}
\left|\sum_j c_j \theta(g_j)\right|^2
m_\eta(\lambda_H(\theta))\,d\mu_{\widehat H}(\theta)\ge 0,
\]
which proves positive definiteness.
\end{proof}

We now use the multiplier ordering to compare the corresponding kernels.

\begin{lemma}\label{lem:kernel-order-main}
Let $\eta_1$ and $\eta_2$ be as in Lemma~\ref{lem:multiplier-order-main}. Then $K_{\eta_1}\preceq K_{\eta_2}$.
\end{lemma}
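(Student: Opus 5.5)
The plan is to interpret $K_{\eta_1}\preceq K_{\eta_2}$ in the standard Loewner sense for kernels: $K_{\eta_2}-K_{\eta_1}$ is a positive definite kernel on $H\times H$. With this reading, the proof reduces to the pointwise multiplier inequality of Lemma~\ref{lem:multiplier-order-main}, combined with the same quadratic-form computation used in Lemma~\ref{lem:kernel-pd-main}.

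First, by linearity of the defining integral, I will write
\[
K_{\eta_2}(g,h)-K_{\eta_1}(g,h)=\int_{\widehat H}\theta(h-g)\,\bigl(m_{\eta_2}(\lambda_H(\theta))-m_{\eta_1}(\lambda_H(\theta))\bigr)\,d\mu_{\widehat H}(\theta).
\]
Both integrals converge absolutely, because $0\le m_{\eta_i}\le \eta_i([0,\infty))<\infty$ and $\mu_{\widehat H}$ is a probability measure. Since $\lambda_H(\theta)\ge0$, Lemma~\ref{lem:multiplier-order-main} applied at $\lambda=\lambda_H(\theta)$ gives
\[
w(\theta):=m_{\eta_2}(\lambda_H(\theta))-m_{\eta_1}(\lambda_H(\theta))\ge 0
\]
for every $\theta\in\widehat H$. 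The function $w$ is also bounded and measurable, since $\lambda_H$ is continuous and $m_{\eta_i}$ is continuous on $[0,\infty)$ by dominated convergence.

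Next, for finitely many $c_j\in\mathbb C$ and $g_j\in H$, I will expand the quadratic form exactly as in the proof of Lemma~\ref{lem:kernel-pd-main}:
\[
\sum_{i,j}\overline{c_i}c_j\bigl(K_{\eta_2}-K_{\eta_1}\bigr)(g_i,g_j)=\int_{\widehat H}\Bigl|\sum_j c_j\theta(g_j)\Bigr|^2 w(\theta)\,d\mu_{\widehat H}(\theta)\ge0.
\]
This uses $\theta(g_j-g_i)=\theta(g_j)\overline{\theta(g_i)}$ and the interchange of a finite sum with the integral. The inequality shows that $K_{\eta_2}-K_{\eta_1}$ is positive definite, which is the claim.

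The only point needing care is confirming that the convex-order hypothesis genuinely applies to the test function $u\mapsto e^{-u\lambda}$. This function is convex by Lemma~\ref{lem:convexity-main} and bounded, so its integrals against $\eta_i$ are finite. The finite first moments are what make convex order meaningful. I also note that convex order forces $\eta_1([0,\infty))=\eta_2([0,\infty))$, because the constants $\pm1$ are both convex test functions. This plays no role in the argument but confirms consistency. Everything else in the proof is the routine computation above.
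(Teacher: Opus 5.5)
Your proof is correct and matches the paper's argument: the paper also writes the quadratic form of $K_{\eta_2}-K_{\eta_1}$ as $\int_{\widehat H}\bigl|\sum_j c_j\theta(g_j)\bigr|^2\bigl(m_{\eta_2}-m_{\eta_1}\bigr)(\lambda_H(\theta))\,d\mu_{\widehat H}(\theta)$ and gets nonnegativity from the pointwise inequality of Lemma~\ref{lem:multiplier-order-main}. Your extra checks (boundedness and measurability of the weight, and that convex order applies to $u\mapsto e^{-u\lambda}$) are details the paper leaves implicit.
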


\begin{proof}
For finite $(c_j)$ and $(g_j)$,
\[
\sum_{i,j}\overline{c_i}c_j\bigl(K_{\eta_2}-K_{\eta_1}\bigr)(g_i,g_j)
=
\int_{\widehat H}
\left|\sum_j c_j\theta(g_j)\right|^2
\bigl(m_{\eta_2}-m_{\eta_1}\bigr)(\lambda_H(\theta))\,d\mu_{\widehat H}(\theta)\ge 0,
\]
by Lemma~\ref{lem:multiplier-order-main}.
\end{proof}

Write $\mathcal H_{K_\eta}$ for the reproducing kernel Hilbert space of $K_\eta$.The kernel ordering above yields the corresponding contractive embedding of these spaces.

\begin{lemma}\label{lem:rkhs-embedding-main2}
If $K_{\eta_1}\preceq K_{\eta_2}$, then $\mathcal H_{K_{\eta_1}}\hookrightarrow \mathcal H_{K_{\eta_2}}$ contractively~\cite{jorgensen2016extensions}.
\end{lemma}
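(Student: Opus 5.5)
The plan is to prove the standard Aronszajn inclusion theorem, specialised to $K_1=K_{\eta_1}$ and $K_2=K_{\eta_2}$. Set $D=K_2-K_1$. By hypothesis $D$ is positive definite on $H\times H$, so it has its own reproducing kernel Hilbert space $\mathcal H_D$. Two routes are available: construct the embedding directly from the kernel inequality, or realise $\mathcal H_{K_2}$ as the space of sums $f_1+f_D$ with $f_1\in\mathcal H_{K_1}$ and $f_D\in\mathcal H_D$. We take the direct route.

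\textbf{Step 1: a pre-Hilbert contraction.} Define
\[
T\Bigl(\sum_j c_j K_2(\cdot,g_j)\Bigr)=\sum_j c_j K_1(\cdot,g_j)
\]
on finite spans of kernel sections. The kernel order gives
\[
\Bigl\|\sum_j c_jK_1(\cdot,g_j)\Bigr\|_{\mathcal H_{K_1}}^2=\sum_{i,j}\overline{c_i}c_jK_1(g_i,g_j)\le\sum_{i,j}\overline{c_i}c_jK_2(g_i,g_j)=\Bigl\|\sum_j c_jK_2(\cdot,g_j)\Bigr\|_{\mathcal H_{K_2}}^2 .
\]
The inequality runs in this direction because the forms are sums over $\overline{c_i}c_j$ as in Lemma~\ref{lem:kernel-order-main}. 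In particular $T$ is well defined: a vanishing $K_2$-combination forces a vanishing $K_1$-combination. Since the sections are dense, $T$ extends to a contraction $T:\mathcal H_{K_2}\to\mathcal H_{K_1}$ with $\|T\|\le 1$.

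\textbf{Step 2: the adjoint is the inclusion.} For $f\in\mathcal H_{K_1}$ and $g\in H$,
\[
\langle T^*f,K_2(\cdot,g)\rangle_{\mathcal H_{K_2}}=\langle f,TK_2(\cdot,g)\rangle_{\mathcal H_{K_1}}=\langle f,K_1(\cdot,g)\rangle_{\mathcal H_{K_1}}=f(g).
\]
The left-hand side is $(T^*f)(g)$ by the reproducing property in $\mathcal H_{K_2}$. Hence $T^*f=f$ pointwise on $H$. So every $f\in\mathcal H_{K_1}$ lies in $\mathcal H_{K_2}$, and
\[
\|f\|_{\mathcal H_{K_2}}=\|T^*f\|\le\|T^*\|\,\|f\|_{\mathcal H_{K_1}}\le\|f\|_{\mathcal H_{K_1}}.
\]
This is exactly the contractive embedding. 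Conjugation conventions (linearity in the first or second slot) do not affect the argument. The kernels here are also real and symmetric, since $m_\eta(\lambda_H)$ is even in $\theta\mapsto\overline\theta$.

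\textbf{Main obstacle.} The only delicate point is well-definedness of $T$ in Step 1, namely that $T$ is not multivalued on the span. The norm inequality of Step 1 handles this directly, so no serious difficulty is expected. Alternatively, one can simply cite the Aronszajn theorem in \cite{jorgensen2016extensions}.
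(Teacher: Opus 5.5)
Your proof is correct, and its first step is the same quadratic-form inequality the paper uses. The paper shows $\sum_{i,j}c_i\overline{c_j}K_{\eta_2}(h_j,h_i)\ge\sum_{i,j}c_i\overline{c_j}K_{\eta_1}(h_j,h_i)$, which is exactly your statement that $K_{\eta_2}(\cdot,h)\mapsto K_{\eta_1}(\cdot,h)$ is norm-decreasing on finite spans.

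Where you differ is in turning this into an embedding. The paper only says that ``the identity map is contractive on the pre-Hilbert space of kernel sections and extends by completion.'' That does not follow directly. The inequality compares the $\mathcal H_{K_{\eta_2}}$-norm of $\sum_i c_iK_{\eta_2}(\cdot,h_i)$ with the $\mathcal H_{K_{\eta_1}}$-norm of the \emph{different} function $\sum_i c_iK_{\eta_1}(\cdot,h_i)$. It does not show that $K_{\eta_1}(\cdot,h)$ lies in $\mathcal H_{K_{\eta_2}}$, nor does it bound that norm. Moreover, the contraction it produces runs from $\mathcal H_{K_{\eta_2}}$ to $\mathcal H_{K_{\eta_1}}$, the opposite direction from the claimed inclusion.

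Your Step~2 supplies the missing duality. The computation $(T^*f)(g)=\langle f,K_{\eta_1}(\cdot,g)\rangle_{\mathcal H_{K_{\eta_1}}}=f(g)$ identifies $T^*$ with the pointwise identity. This gives at once the set inclusion $\mathcal H_{K_{\eta_1}}\subset\mathcal H_{K_{\eta_2}}$ and the bound $\|f\|_{\mathcal H_{K_{\eta_2}}}\le\|T^*\|\,\|f\|_{\mathcal H_{K_{\eta_1}}}\le\|f\|_{\mathcal H_{K_{\eta_1}}}$. Your argument is therefore the rigorous version of the paper's sketch, namely the standard Aronszajn argument behind the citation. Your explicit check that $T$ is well defined is another point the paper leaves implicit.
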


\begin{proof}
For every finite family $(c_j,h_j)$,
\begin{align*}
\sum_{i,j} c_i\overline{c_j} K_{\eta_2}(h_j,h_i)
&=
\sum_{i,j} c_i\overline{c_j} K_{\eta_1}(h_j,h_i)
+
\sum_{i,j} c_i\overline{c_j} \bigl(K_{\eta_2}-K_{\eta_1}\bigr)(h_j,h_i) \\
&\ge
\sum_{i,j} c_i\overline{c_j} K_{\eta_1}(h_j,h_i),
\end{align*}
since $K_{\eta_2}-K_{\eta_1}$ is positive definite. Thus the identity map is contractive on the pre-Hilbert space of kernel sections and extends by completion to a contractive embedding $\mathcal H_{K_{\eta_1}}\hookrightarrow \mathcal H_{K_{\eta_2}}$.
\end{proof}

The kernel $K_\eta$ defines the semimetric $d_\eta$, and the spectral weight $m_\eta(\lambda_H)$ along with $K_\eta$ define the scalar quantities $A_\eta$ and $B_\eta$:
\begin{align*}
d_\eta(g,h) &= \bigl( K_\eta(g,g)+K_\eta(h,h)-2\Re K_\eta(g,h) \bigr)^{1/2}, \\
A_\eta &= \int_{\widehat H}\lambda_H(\theta)\,m_\eta(\lambda_H(\theta))\,d\mu_{\widehat H}(\theta), \\
B_\eta &= K_\eta(0,0) = \int_{\widehat H} m_\eta(\lambda_H(\theta))\,d\mu_{\widehat H}(\theta).
\end{align*}

\begin{theorem}\label{thm:heat-scale-majorization}
Let $\eta_1,\eta_2$ be finite positive Borel measures on $[0,\infty)$ with finite first moments. If $\eta_1 \preceq_{\mathrm{cx}} \eta_2$, then:
\begin{enumerate}
\item $K_{\eta_1}\preceq K_{\eta_2}$. In particular:
\begin{enumerate}
\item $\mathcal H_{K_{\eta_1}}\hookrightarrow \mathcal H_{K_{\eta_2}}$ contractively,
\item $d_{\eta_1}(g,h)\le d_{\eta_2}(g,h)$ for all $g,h\in H$,
\item $B_{\eta_1}\le B_{\eta_2}$,
\end{enumerate}
\item If $A_{\eta_2}<\infty$, then:
\begin{enumerate}
\item $A_{\eta_1}<\infty$,
\item $A_{\eta_1}\le A_{\eta_2}$,
\item $d_{\eta_2}(g,h)\le A_{\eta_2}^{1/2}\rho(g,h)$ for all $g,h\in H$.
\end{enumerate}
\end{enumerate}
\end{theorem}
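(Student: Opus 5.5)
Part 1 is essentially assembled from the preceding lemmas. Lemma~\ref{lem:multiplier-order-main} gives $m_{\eta_1}\le m_{\eta_2}$ pointwise on $[0,\infty)$. Lemma~\ref{lem:kernel-order-main} then gives $K_{\eta_1}\preceq K_{\eta_2}$, and Lemma~\ref{lem:rkhs-embedding-main2} gives the contractive embedding in (a).

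For (c), I evaluate the positive definite kernel $K_{\eta_2}-K_{\eta_1}$ on the single point $0$, which yields $B_{\eta_2}-B_{\eta_1}\ge0$. Equivalently, I integrate $m_{\eta_2}-m_{\eta_1}\ge0$ against $\mu_{\widehat H}$.

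For (b), I write the semimetric spectrally:
\[
d_\eta(g,h)^2=\int_{\widehat H}|\theta(h)-\theta(g)|^2\,m_\eta(\lambda_H(\theta))\,d\mu_{\widehat H}(\theta)
=\int_{\widehat H}|\theta(h-g)-1|^2\,m_\eta(\lambda_H(\theta))\,d\mu_{\widehat H}(\theta).
\]
This is the quadratic form of $K_\eta$ with coefficients $c=(1,-1)$ at the points $(g,h)$, using that $K_\eta$ is real because $\lambda_H(\bar\theta)=\lambda_H(\theta)$. The inequality $d_{\eta_1}\le d_{\eta_2}$ then follows from $m_{\eta_1}\le m_{\eta_2}$ applied to this nonnegative integrand, or directly from $K_{\eta_1}\preceq K_{\eta_2}$ with this choice of coefficients.

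For 2(a) and 2(b), Lemma~\ref{lem:multiplier-order-main} also gives $0\le\lambda m_{\eta_1}(\lambda)\le\lambda m_{\eta_2}(\lambda)$. Integrating this against $\mu_{\widehat H}$ shows that $A_{\eta_1}\le A_{\eta_2}<\infty$. Note that $\lambda_H$ is bounded here, so these integrals are in fact always finite, but the monotone comparison is all that is needed.

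For 2(c), I start from the spectral formula for $d_{\eta_2}(g,h)^2$ and bound the oscillation factor pointwise in $\theta$. Set $\gamma=h-g$. By Lemma~\ref{lem:char-lip-comparison-uniform} and translation invariance of $\rho$,
\[
|\theta(\gamma)-1|\le \mathrm{Lip}_\rho(\theta)\,\rho(g,h).
\]
I then use the comparison $\mathrm{Lip}_\rho(\theta)^2\le\lambda_H(\theta)$, exactly as it is invoked in the proof of Theorem~\ref{thm:lip-rkhs-invariants}. This gives $d_{\eta_2}(g,h)^2\le\rho(g,h)^2\int\lambda_H\,m_{\eta_2}(\lambda_H)\,d\mu_{\widehat H}=A_{\eta_2}\rho(g,h)^2$.

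The main obstacle is this last comparison $\mathrm{Lip}_\rho(\theta)^2\le\lambda_H(\theta)$. It is the only step that ties the spectral side to the transport metric, and it is not an identity. I would take it as established in the manner used in Theorem~\ref{thm:lip-rkhs-invariants}: pass to a finite $F$ containing the support of $\gamma$, use Lemma~\ref{lem:projection}, and apply the finite-rank bound Lemma~\ref{lem:lambda-vs-L}, with the weight normalization absorbed into the constant. If that normalization does not come out to $1$, the conclusion would carry an explicit constant, and I would record it rather than force it into the form stated.
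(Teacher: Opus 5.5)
Your treatment of part~1 and of 2(a)--(b) matches the paper's argument and is correct. Your remark that $\lambda_H$ is bounded, so $A_\eta<\infty$ automatically, is also right. The gap is 2(c), at exactly the step you flagged. The comparison $\mathrm{Lip}_\rho(\theta)^2\le\lambda_H(\theta)$, equivalently $|\theta(\gamma)-1|^2\le\lambda_H(\theta)\,\rho(\gamma,0)^2$, is false, and 2(c) is false as stated.

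Here is a counterexample. Take $X=A\cup\{x,y\}$ with $A=\{a\}$, all three pairwise distances equal to $1$, and $\psi(1)=1$. Then:
\begin{itemize}
\item for $\kappa=e_x-e_y$ one has $\nu(\pm\kappa)=\tfrac12$ and $H=\mathbb Z\kappa$;
\item $\rho(\kappa,0)=\overline d_1(x,y)=1$;
\item writing $\theta(\kappa)=e^{i\phi}$, one gets $\lambda_H(\theta)=1-\cos\phi$, so $|\theta(\kappa)-1|^2=2\lambda_H(\theta)$ pointwise.
\end{itemize}
Your spectral formula then gives $d_\eta(0,\kappa)^2=2A_\eta$ for every $\eta$. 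But 2(c) asserts $d_\eta(0,\kappa)^2\le A_\eta\,\rho(0,\kappa)^2=A_\eta$, which fails for $\eta_1=\eta_2=\delta_t$, where $A_{\delta_t}>0$.

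More generally, replacing $\psi$ by $c\psi$ leaves $H$, $\rho$ and $\mathrm{Lip}_\rho(\theta)$ unchanged but multiplies $\lambda_H$ by $c$. So no inequality of this type with a $\psi$-independent constant can hold. The paper's proof (like that of Theorem~\ref{thm:lip-rkhs-invariants}) makes the same move and attributes it to Lemma~\ref{lem:char-lip-comparison-uniform}. That lemma, however, only compares $\mathrm{Lip}_\rho(\chi)$ with the phase constant $\mathrm{Lip}_{\overline d_1}(\phi_\chi)$ and involves neither $\psi$ nor $\nu$.

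Your proposed justification would not yield even a usable constant, for two reasons.

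First, Lemma~\ref{lem:lambda-vs-L} has lower constant $2w_{\min}d_{\min}^2/\pi^2$. This depends on the finite graph, and hence on the $F$ chosen for $\gamma$. When $\nu$ has infinitely many atoms, summability forces the positive weights to have infimum $0$, so taking the supremum over $\gamma\in H$ gives nothing uniform.

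Second, Lemma~\ref{lem:projection} evaluates $\lambda^\psi$ only at characters $\chi\circ\pi_F$, which are trivial on $e_y$ for $y\notin F$. Let $\tilde\theta$ be an extension of a general $\theta\in\widehat H$ and $\chi$ its restriction to $K(X_F,A)$. The boundary terms $1-\Re\tilde\theta(e_x-e_y)$, with $x\in F$ and $y\notin F$, need not dominate the terms $1-\Re\chi(e_x)$ in $\lambda_F^{D,\psi}$. For instance, if $\tilde\theta(e_x)=\tilde\theta(e_y)=-1$, the former is $0$ and the latter is $2$. So a lower bound on $\lambda_F^{D,\psi}$ does not transfer to $\lambda_H$.

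Your instinct not to force the stated form is correct, but 2(c) cannot be proved as written. What one can prove instead is a cruder bound such as $d_\eta(g,h)\le 2B_\eta^{1/2}\le (2/\delta)\,B_\eta^{1/2}\rho(g,h)$. Here $\delta>0$ is the separation constant of $(X/A,\overline d_1)$ from (H), which gives $\rho(g,h)\ge\delta$ for $g\ne h$.
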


\begin{proof}
By Lemma~\ref{lem:kernel-order-main}, one has $K_{\eta_1}\preceq K_{\eta_2}$, and hence \( \mathcal H_{K_{\eta_1}}\hookrightarrow \mathcal H_{K_{\eta_2}} \) contractively by Lemma~\ref{lem:rkhs-embedding-main2}.

We next compute $d_\eta(g,h)^2$. Using the definition of $K_\eta$, we have
\begin{align*}
K_\eta(g,g) &= \int_{\widehat H} \theta(0)\,m_\eta(\lambda_H(\theta))\,d\mu_{\widehat H}(\theta) = \int_{\widehat H} m_\eta(\lambda_H(\theta))\,d\mu_{\widehat H}(\theta), \\ 
K_\eta(h,h) &= \int_{\widehat H} m_\eta(\lambda_H(\theta))\,d\mu_{\widehat H}(\theta), \\
K_\eta(g,h) &= \int_{\widehat H} \theta(h-g)\,m_\eta(\lambda_H(\theta))\,d\mu_{\widehat H}(\theta).
\end{align*}
Therefore,
\begin{align*}
d_\eta(g,h)^2 &= K_\eta(g,g)+K_\eta(h,h)-2\Re K_\eta(g,h) \\
&= \int_{\widehat H} \bigl(2-2\Re \theta(h-g)\bigr)\,m_\eta(\lambda_H(\theta))\,d\mu_{\widehat H}(\theta) \\
&= \int_{\widehat H} |\theta(h-g)-1|^2\,m_\eta(\lambda_H(\theta))\,d\mu_{\widehat H}(\theta).
\end{align*}

Since Lemma~\ref{lem:multiplier-order-main} gives \( m_{\eta_1}(\lambda)\le m_{\eta_2}(\lambda)\) for all \(\lambda\ge 0, \) it follows that
\[
d_{\eta_1}(g,h)^2 \le d_{\eta_2}(g,h)^2,
\]
and hence $d_{\eta_1}(g,h)\le d_{\eta_2}(g,h)$. Similarly, from the definition of $B_\eta$ and the same pointwise inequality, \( B_{\eta_1}\le B_{\eta_2}. \)

If $A_{\eta_2}<\infty$, then again by Lemma~\ref{lem:multiplier-order-main}, \( \lambda_H(\theta)m_{\eta_1}(\lambda_H(\theta)) \le \lambda_H(\theta)m_{\eta_2}(\lambda_H(\theta)) \) for all \(\theta \in \widehat H, \)
and therefore $A_{\eta_1}<\infty$ and $A_{\eta_1}\le A_{\eta_2}$.

Finally, using the representation of $d_{\eta_2}$ above with the estimate \( |\theta(h-g)-1|^2\le \lambda_H(\theta)\rho(g,h)^2 \) from Lemma~\ref{lem:char-lip-comparison-uniform}, we obtain
\begin{align*}
d_{\eta_2}(g,h)^2 &\le \rho(g,h)^2 \int_{\widehat H} \lambda_H(\theta)\,m_{\eta_2}(\lambda_H(\theta))\,d\mu_{\widehat H}(\theta) \\
&= A_{\eta_2}\rho(g,h)^2.
\end{align*}
Taking square roots gives \( d_{\eta_2}(g,h)\le A_{\eta_2}^{1/2}\rho(g,h), \) which completes the proof.
\end{proof}

\section{Example}
\label{sec:example}

We show the heat flow construction and the resulting random--walk invariants on a concrete pair of finite weighted graphs. The graphs in Figure~\ref{fig:graphs} are Watts--Strogatz small--world networks \cite{WattsStrogatz1998} with edge weights in $\mathbb N$. The resulting birth and death times lie in the integer lattice above the diagonal, giving a uniformly discrete birth--death geometry and placing the associated virtual persistence diagram group in the discrete locally compact abelian case.

\begin{figure}[t]
\centering
\includegraphics[width=0.48\textwidth]{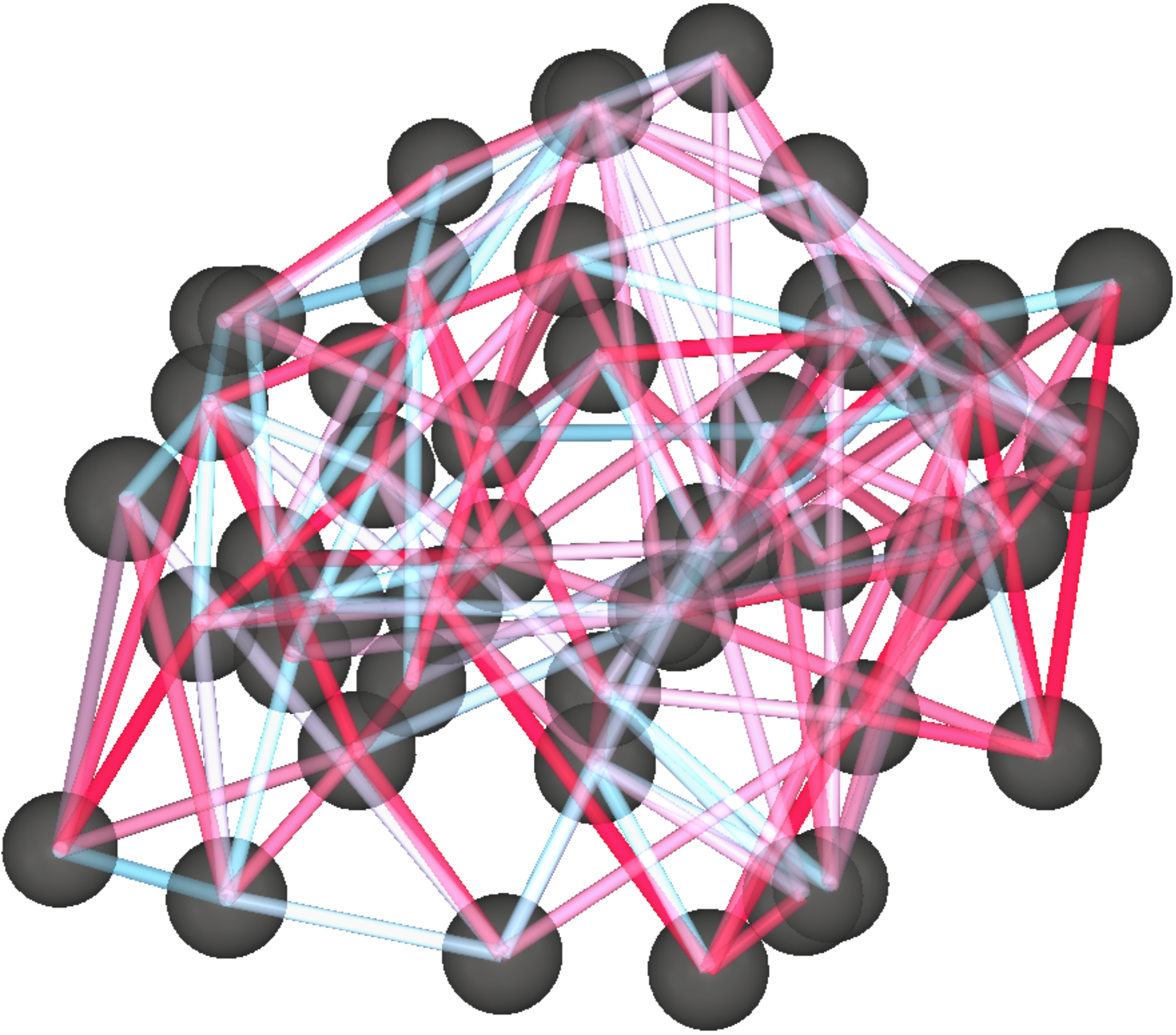}\hfill
\includegraphics[width=0.48\textwidth]{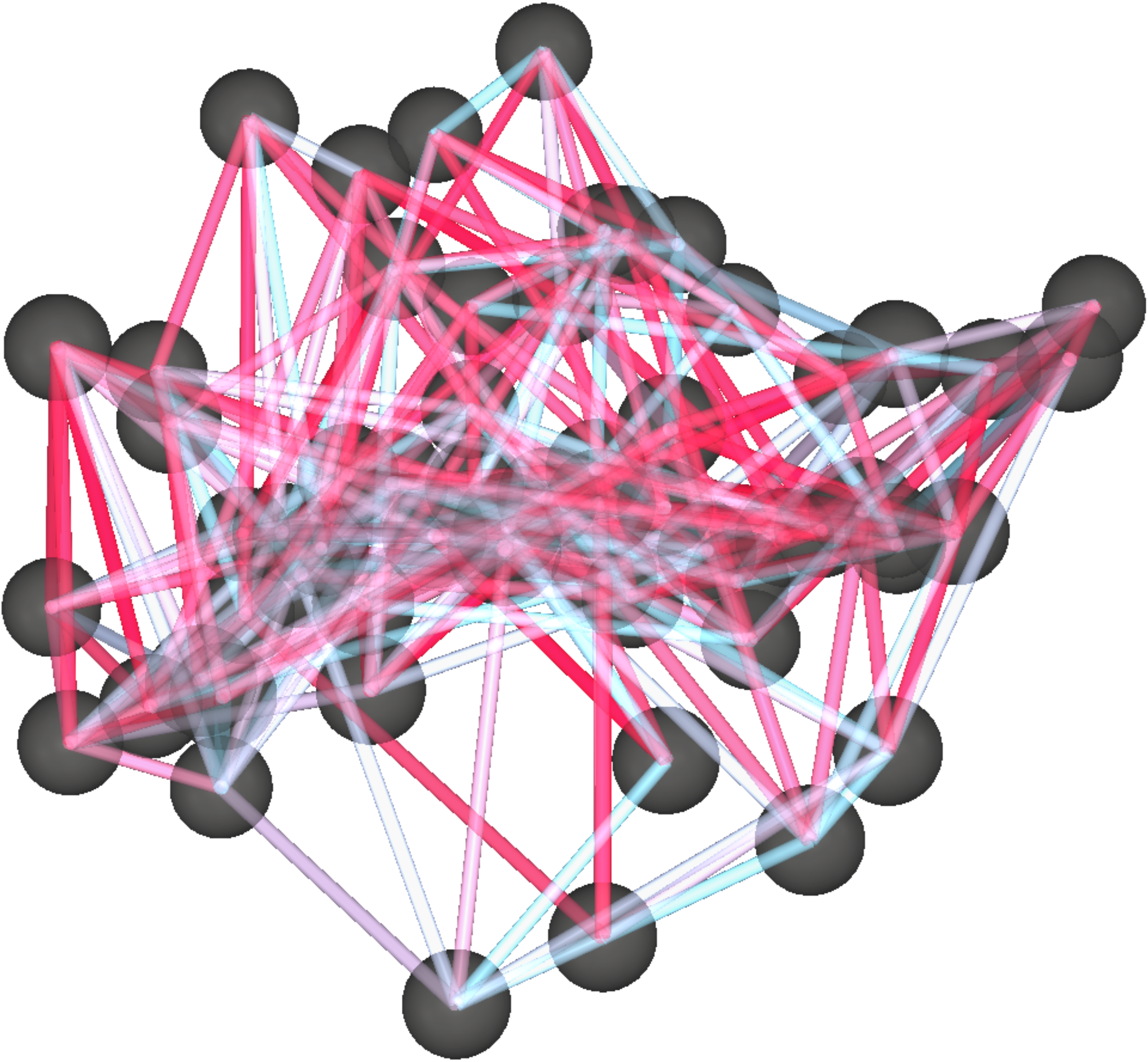}
\caption{Weighted graphs used to generate the persistence diagrams.}
\label{fig:graphs}
\end{figure}

Figure~\ref{fig:graphs} shows the two input graphs. The graph on the left has $50$ vertices, an underlying regular degree parameter $k=6$, rewiring probability $0.3$, and edge weights in the integer interval $[1,8]$. The graph on the right has $60$ vertices, degree parameter $k=8$, rewiring probability $0.4$, and edge weights in $[1,10]$. In both cases the filtration parameter is the edge weight, so every simplex enters at an integer time and the ambient birth--death space is countably infinite.

\begin{figure}[t]
\centering
\includegraphics[width=0.48\textwidth]{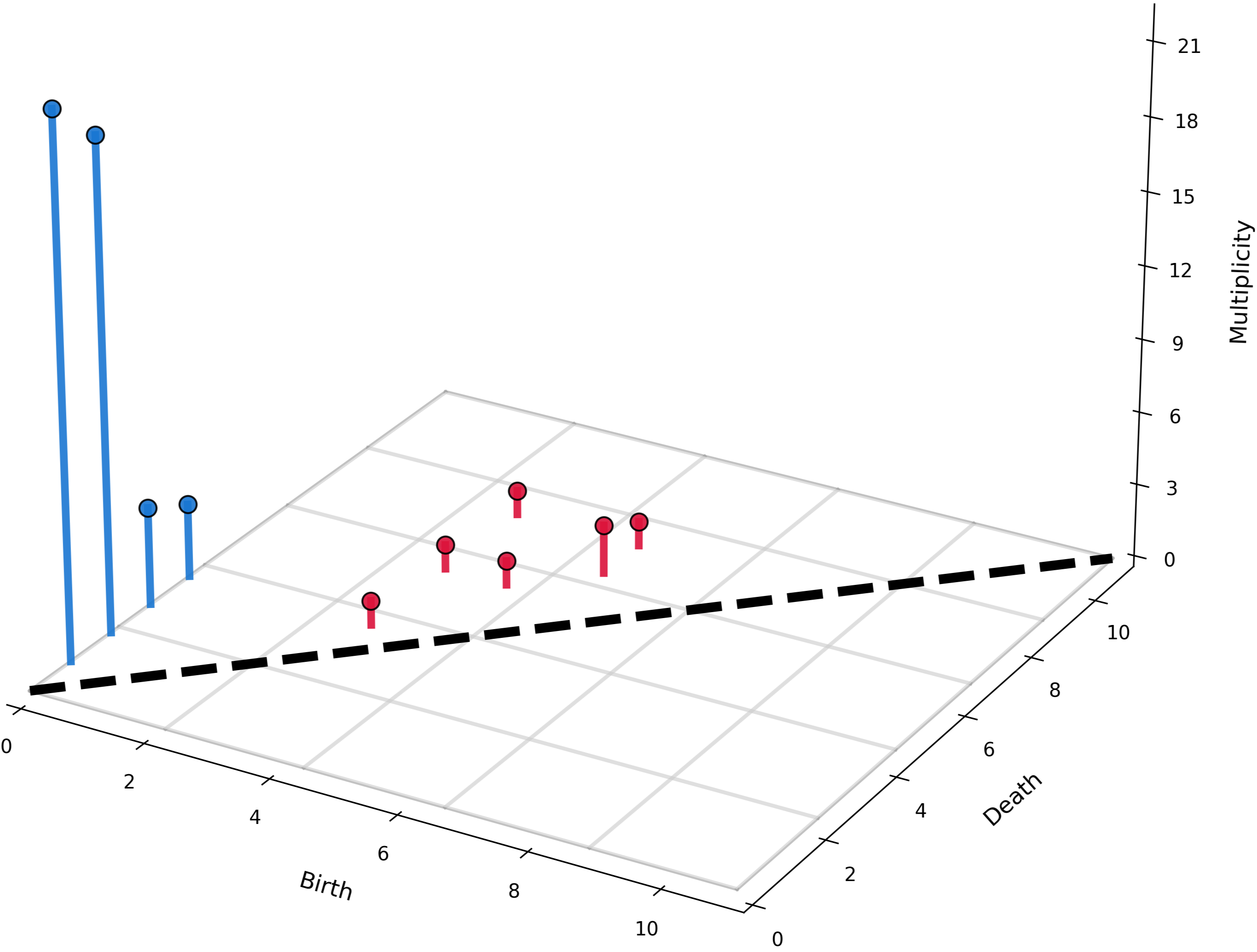}\hfill
\includegraphics[width=0.48\textwidth]{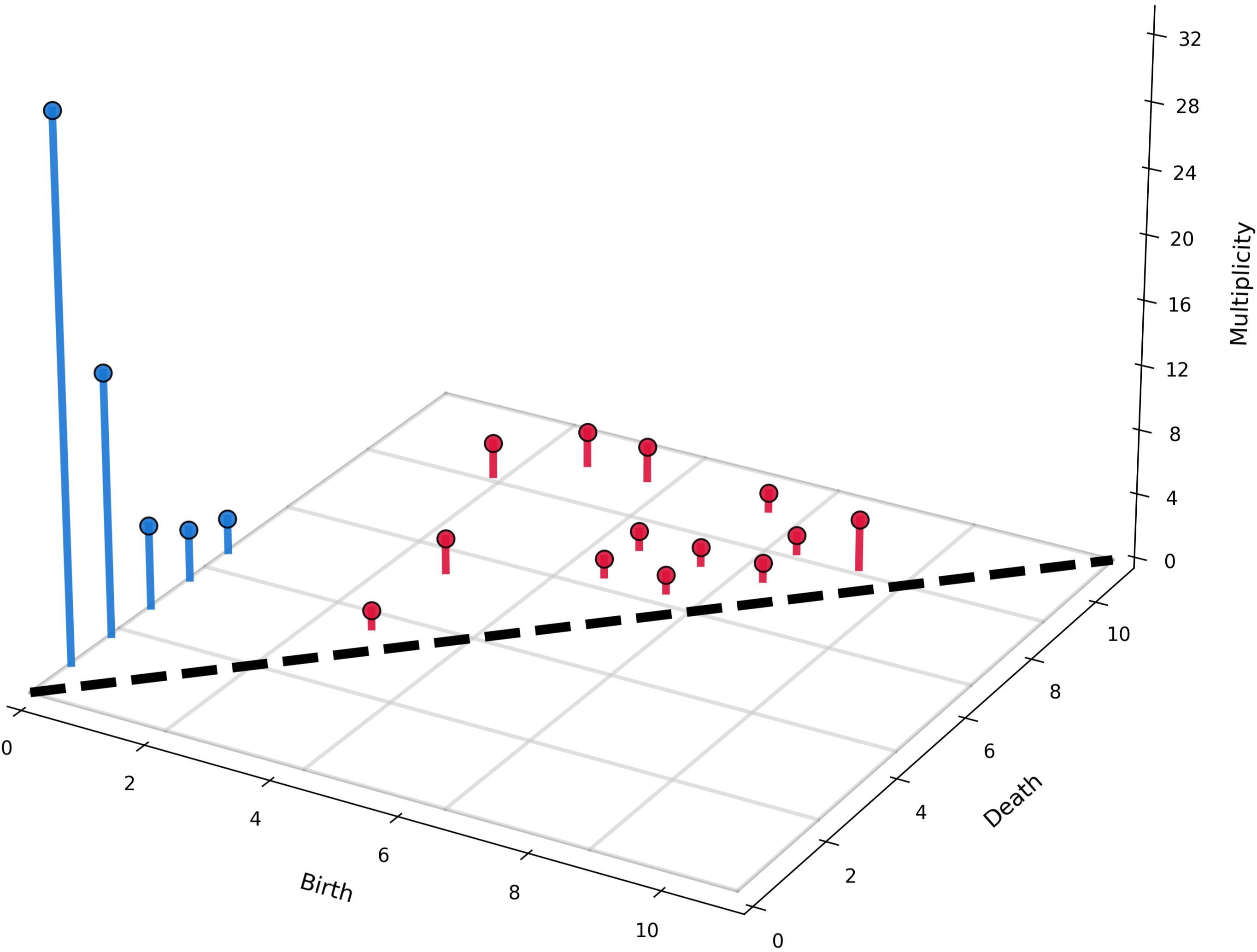}
\caption{The persistence diagrams of the weighted graphs in Figure~\ref{fig:graphs}.}
\label{fig:pds}
\end{figure}

The persistence diagrams shown in Figure~\ref{fig:pds} are supported on a uniformly discrete subset of $X/A$. Their difference defines an element of the virtual persistence diagram group $K(X,A)$ (Figure~\ref{fig:vpd}).

\begin{figure}[t]
\centering
\includegraphics[width=0.6\textwidth]{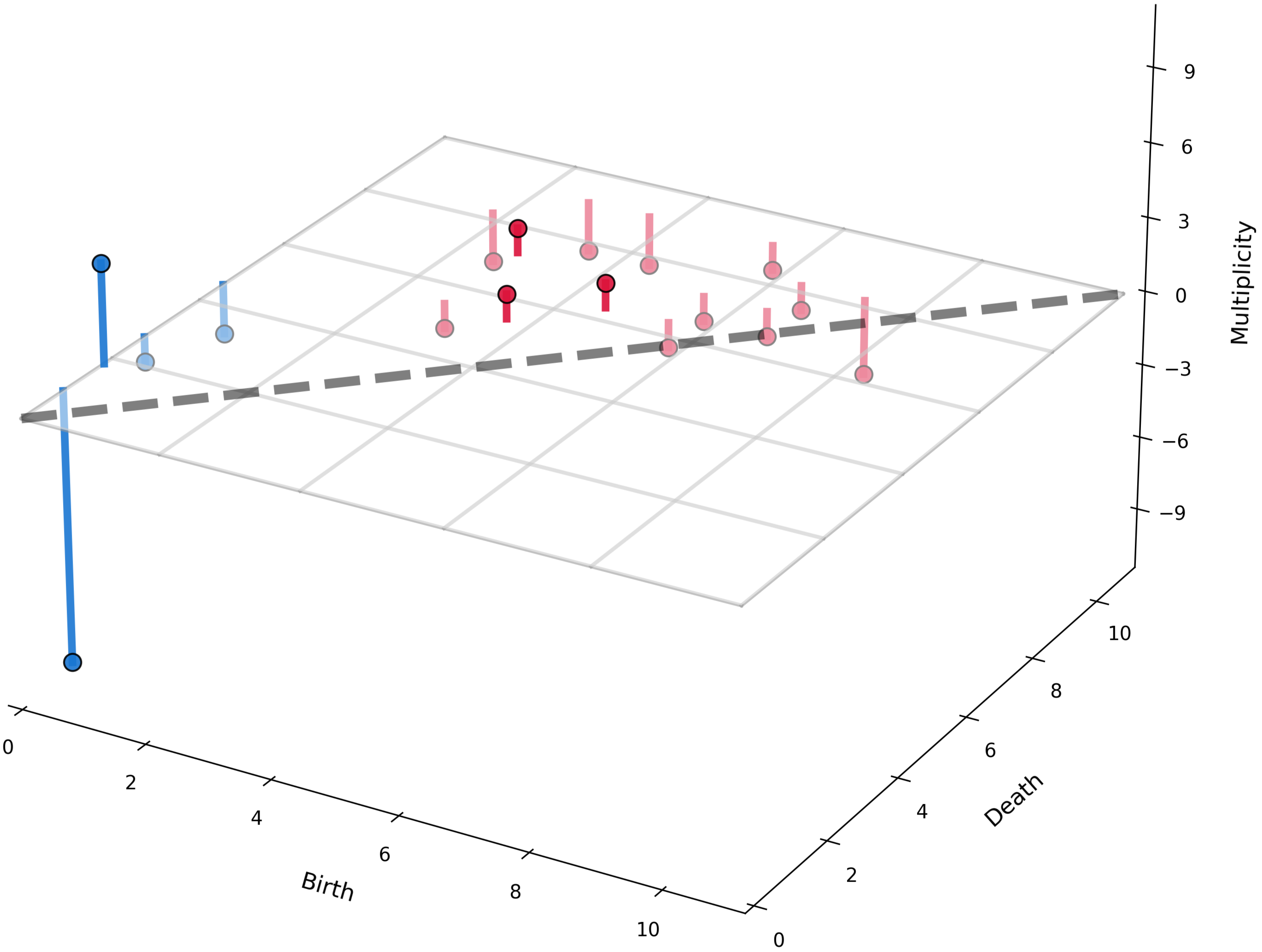}
\caption{The virtual persistence diagram associated with the two persistence diagrams in Figure~\ref{fig:pds}.}
\label{fig:vpd}
\end{figure}

For the plots in Figure~\ref{fig:invariants-bounds}, we compute the negative definite function $\lambda_H$ and the scalar invariants from Subsection~\ref{subsec:heat-invariants} on the subgroup of $H$ generated by the finitely many coordinates that appear in the virtual persistence diagrams used in the example. The shaded bounds in the right panel are obtained by inserting these invariants into the general Lipschitz, $\ell^2(H)\!\to\!\ell^\infty(H)$, and resolvent inequalities for this group $K(X,A)$.

\begin{figure}[t]
\centering
\includegraphics[width=0.48\textwidth]{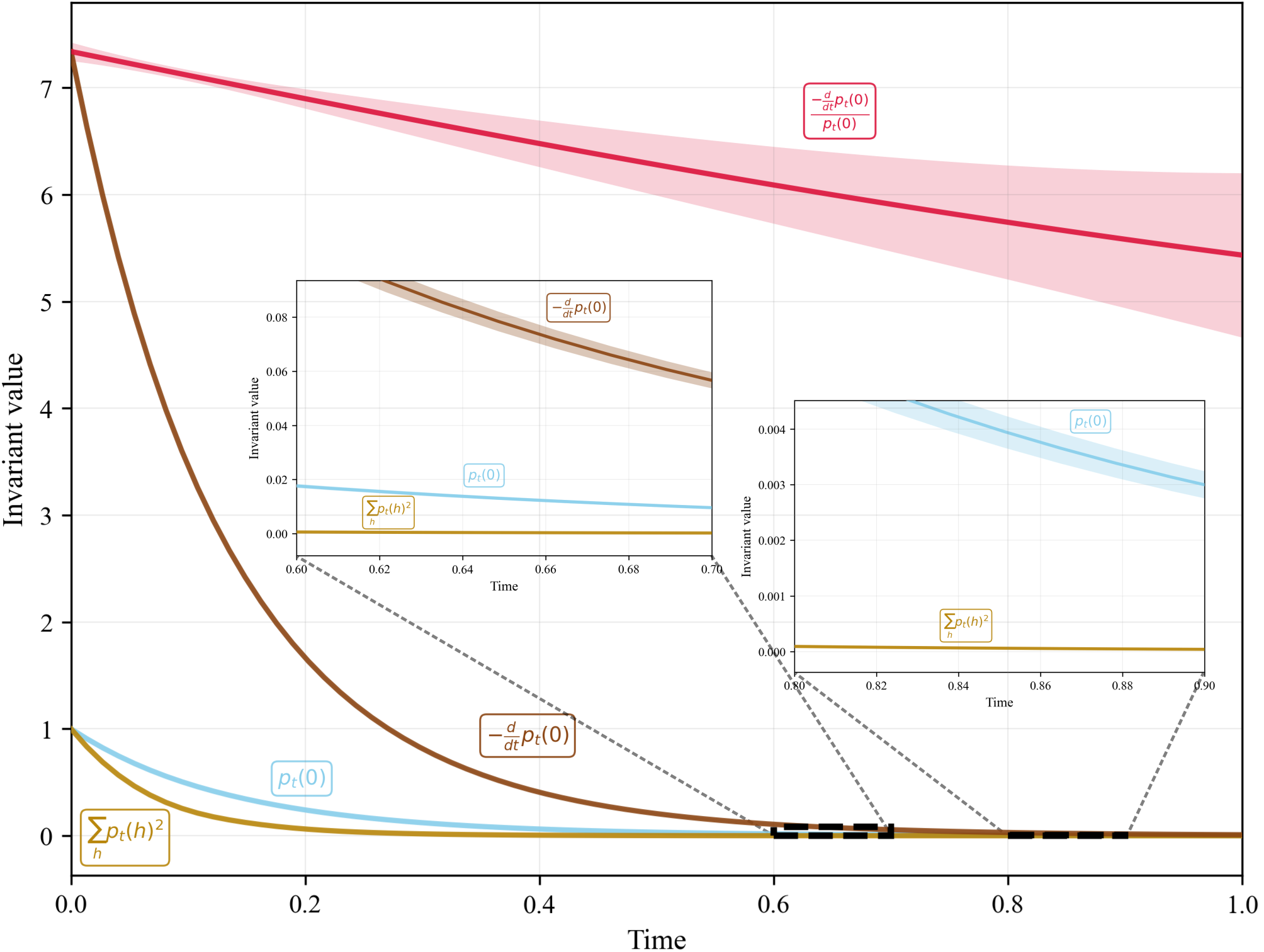}\hfill
\includegraphics[width=0.48\textwidth]{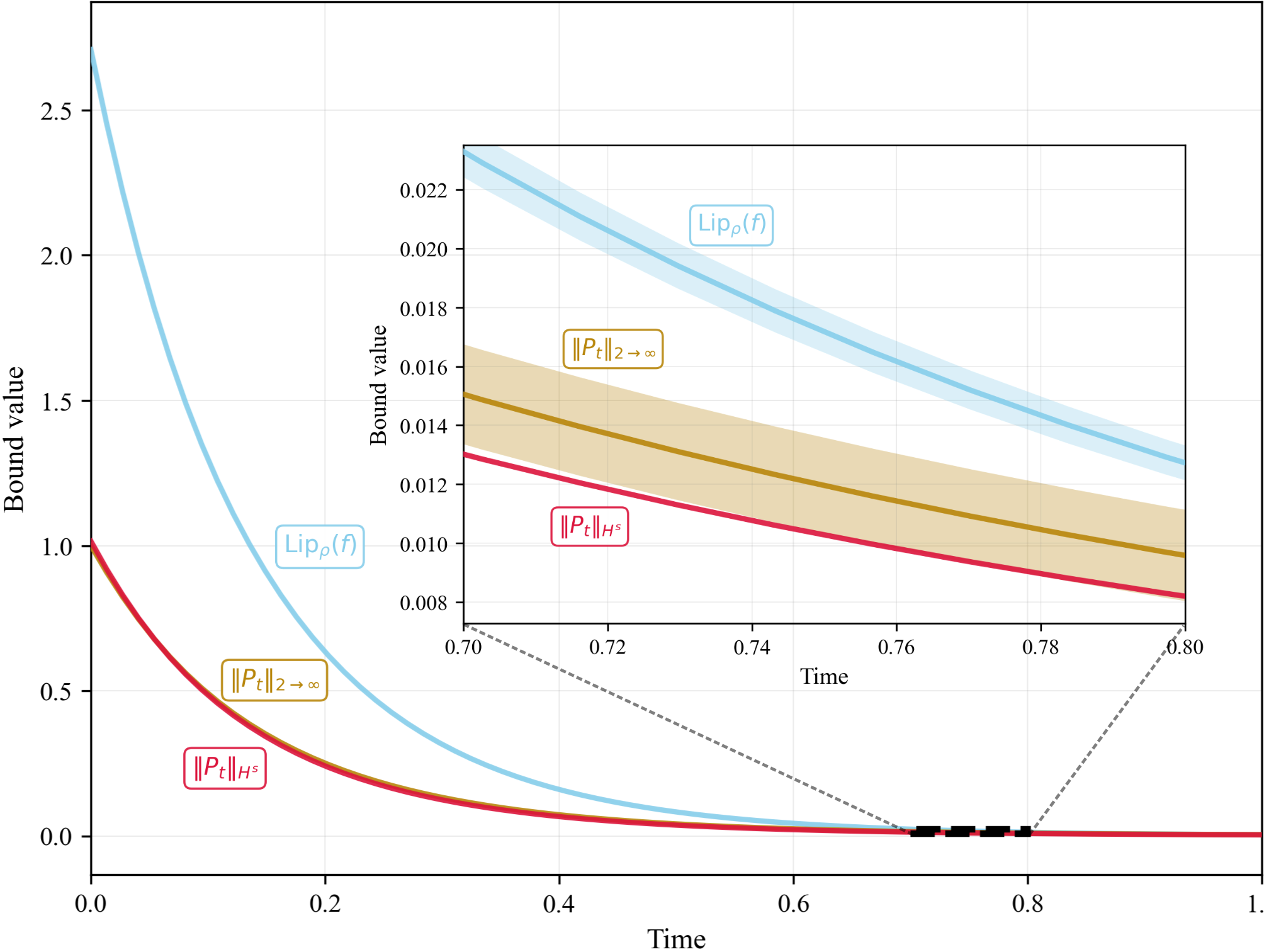}
\caption{Heat kernel invariants and induced bounds as functions of diffusion time $\tau$.}
\label{fig:invariants-bounds}
\end{figure}

Figure~\ref{fig:invariants-bounds} shows the heat kernel invariants and the corresponding bounds associated with the random walk. In the left panel, the four invariants separate at small diffusion time into two quantities of larger magnitude and two of smaller magnitude. As diffusion time increases, three of the four invariants decay monotonically on comparable time scales, while the normalized spectral scale varies much more slowly. The decaying invariants arise as Laplace transforms of the spectral distribution of the generator, whereas the normalized scale is a spectral average and is therefore much less sensitive to uniform exponential damping.

The right panel displays the corresponding bounds obtained by inserting these invariants into the general inequalities proved above, evaluated on the heat kernel section at the identity. The qualitative behavior of the bounds mirrors that of the invariants shown on the left. Bounds that depend multiplicatively on the return probability decrease quickly as diffusion time increases, while the bound controlled by the resolvent varies on a longer scale, reflecting the persistence of low--frequency spectral mass.

\section{Conclusion}

This paper extends the heat kernel and reproducing kernel Hilbert space theory for virtual persistence diagrams from the finite--rank case of \cite{fanning2025reproducingkernelhilbertspaces} to uniformly discrete metric pairs $(X,d,A)$. In this case, we construct a symmetric, translation--invariant heat semigroup $(P_t)_{t\ge0}$ on the virtual persistence diagram group $K(X,A)$ from a summable symmetric pair--jump kernel. We show that this semigroup is supported on a countable subgroup $H \subset K(X,A)$ and carry out the harmonic analysis on $H$. On $H$, the semigroup has a L\'evy--Khintchine exponent $\lambda_H$, which underlies the heat kernels and the kernels, semimetrics, and scalar invariants developed in the paper.

The random walk on $H$ defines four concrete invariants: the return probability, collision probability, heat-kernel energy, and diagonal resolvent value. Each admits both a probabilistic interpretation through the induced L\'evy process and a spectral representation through the symbol $\lambda_H$. These quantities determine the analytic estimates of the theory. The return and collision probabilities give the evaluation and $\ell^2\!\to\!\ell^\infty$ norms, while the heat-kernel energy and diagonal resolvent value give the sharp constants in the Lipschitz and Sobolev-type inequalities; the same quantities control the tail and covering bounds. Truncation of the L\'evy measure produces finite-activity compound-Poisson models whose invariants converge monotonically to the full values, and this convergence gives explicit approximations of the corresponding constants. Section~\ref{sec:stoch-mc} concludes with the heat-scale majorization theorem, which replaces the pure heat weights $e^{-t\lambda_H}$ by mixtures and establishes a convex-order relation that transfers directly to the associated kernels.

The principal limitation of the present theory is its reliance on uniform discreteness of the pointed metric space $(X/A,\overline d_1,[A])$. This assumption is necessary for discreteness and local compactness of the virtual persistence diagram group, for the reduction to the countable subgroup $H$, and for the use of classical harmonic analysis and L\'evy--Khintchine theory. As a consequence, the results do not apply to non--uniformly discrete cases such as the classical birth--death plane over $\mathbb R$, the canonical choice in many applications of persistent homology.

Future work includes examining the correspondence between Gaussian measures and reproducing kernel Hilbert spaces to construct Gaussian process models on $H$, with the heat kernel RKHSs as Cameron--Martin spaces. On the statistical side, these invariants motivate procedures for comparison and inference based on profiles of return, collision, and resolvent quantities. On the analytic side, the Dirichlet form and generator associated with the induced semigroup give a natural starting point for extending the present regularity and approximation theory beyond RKHSs to Sobolev-- and H\"older--type function spaces defined via the semigroup or the VPD metric.

\section*{Statements and Declarations}

\begin{itemize}

\item \textbf{Competing Interests} The authors declare that they have no competing interests.

\item \textbf{Funding} This research received no external funding.

\item \textbf{Data Availability} Not applicable.

\item \textbf{Code Availability} The implementation used in this work is available at \url{https://github.com/cfanning8/Random_Walks_on_Virtual_Persistence_Diagrams}.

\item \textbf{Authors' Contributions}
C.F. developed the theory, conducted and analyzed the examples, and wrote the manuscript. M.E.A. advised the project and gave feedback on the theory and manuscript.

\end{itemize}


\end{document}